\newtheorem{theorem}{Theorem}[section]
\newtheorem{lemma}[theorem]{Lemma}
\theoremstyle{definition}
\newtheorem{definition}[theorem]{Definition}
\newtheorem{example}[theorem]{Example}
\newtheorem{cor}[theorem]{Corollary}
\newtheorem{remark}[theorem]{Remark}
\numberwithin{equation}{subsection}
\def\lim{\mathop{\mathrm{lim}}\nolimits}
\def\colim{\mathop{\mathrm{colim}}\nolimits}
\def\Hom{\mathop{\mathrm{Hom}}\nolimits}
\def\Pic{\mathop{mathrm{Pic}}}
\def\gl{\mathop{\mathrm{Gl}}\nolimits}
\def\Pic{\mathop{\mathrm{Pic}}\nolimits}
\def\deg{\mathop{\mathrm{deg}}\nolimits}
\def\det{\mathop{\mathrm{det}}\nolimits}
\def\dim{\mathop{\mathrm{dim}}\nolimits}
\def\rk{\mathop{\mathrm{rk}}\nolimits}
\def\colim{\mathop{\mathrm{colim}}\nolimits}
\def\lim{\mathop{\mathrm{lim}}\nolimits}
\def\et{\mathop{\text{\'et}}\nolimits}
\newtheorem{thmx}{Theorem}
\newtheorem{corx}[thmx]{Corollary}
\def\l@subsection{\@tocline{2}{0pt}{1pc}{5pc}{}} \def\l@subsection{\@tocline{2}{0pt}{2pc}{6pc}{}} 
\begin{document}
\title[A functorial approach to stability]{A functorial approach to the stability of vector bundles}

\author{Dario Wei\ss mann} 
\thanks{\emph{Email}: dario.weissmann@posteo.de \\
\hspace*{1.4em}\emph{ORCID}: 0000-0002-4005-2847\\ \hspace*{1.4em}Universit\"{a}t Duisburg-Essen, Universit\"{a}tsstra\ss e 2, 45141, Essen, Germany}

\address{
Fakult\"{a}t f\"{u}r Mathematik,
Universit\"{a}t Duisburg-Essen,
Universit\"{a}tsstra\ss e 2,
45141 Essen,
Germany}

\email{\href{mailto: dario.weissmann@posteo.de}{dario.weissmann@posteo.de}}

\date{May 31, 2023}

\begin{abstract}
    On a normal projective variety the locus of $\mu$-stable bundles that
    remain $\mu$-stable on {\it all}
    Galois covers prime to the characteristic 
    is open in the moduli space of Gieseker semistable sheaves.
    On a smooth projective curve of genus at least $2$ this locus
    is big in the moduli space of stable bundles.
    As an application we obtain a very different behaviour of the \'etale fundamental group in positive vs. characteristic $0$.
\end{abstract}

%\maketitle
\maketitle

\subsection*{Keywords} Stability $\cdot$ Slope stability $\cdot$ \'Etale fundamental group $\cdot$ \'Etale cover $\cdot$ Moduli of vector bundles
$\cdot$ Functoriality
\subsection*{Mathematics Subjects Classification}
14H60 14D20 14H30 

\section{Introduction}

Consider the stack of vector bundles
on a smooth projective curve $C$ over an algebraically closed field $k$ of
characteristic $p\geq 0$.
Semistability is a property of vector bundles
which is tailored to obtain a moduli space.
Via the Harder-Narasimhan-filtration (HN-filtration for short) it also reveals additional 
structure of the category of vector bundles
and immediately implies that semistability is functorial under pullback by finite separable morphisms.
Even more structure is revealed via the Jordan-Hölder-filtration (JH-filtration for short). However,
in contrast to the HN-filtration the JH-filtration is not unique and thus functoriality fails for stability.

Recently, those morphisms that preserve the stability of vector bundles under pullback have been identified:
for curves these are exactly the genuinely ramified morphisms, see \cite[Theorem 5.3]{bp}.
In higher dimension, genuinely ramified morphisms also preserve stability, see \cite[Theorem 1.2]{bdp}.

The main goal of this paper is to address a way to measure the failure of stability to be functorial under 
all finite separable pullbacks. 
As an application we obtain a very different
behaviour of the \'etale fundamental groups in positive versus characteristic $0$.

Representations of $\pi_{\et}(C)$ correspond to vector bundles of degree $0$
which are trivialized on some \'etale cover over of $C$, see \cite[1.2 Proposition]{ls}.
In positive characteristic these \'etale trivializable bundles are dense in the moduli space 
$M^{ss,r,0}_C$ of semistable bundles of rank $r$ and degree $0$, 
see \cite[Corollary 5.1]{dm}.
This no longer holds in characteristic $0$ as we show that the general bundle remains stable on all \'etale covers (avoiding the characteristic).
Put another way, the \'etale fundamental group has enough information to recover the moduli space in positive characteristic but not in characteristic $0$.

To make our results precise we need a definition.
Call a vector bundle on $C$ {\it prime to $p$ stable} if it remains stable
after pullback by all finite Galois covers $D\to C$ which have degree prime to $p$, see also Definition
\ref{definition-fun-stab}.
The locus of prime to $p$ stable bundles is open - a direct consequence 
of the following theorem.

\begin{thmx}[Theorem \ref{theorem-very-large-cover} for curves]
Let $r\geq 2$ and $C$ be a smooth projective curve over an algebraically closed
field of characteristic $p\geq 0$.
Then there exists a connected \'etale 
prime to $p$ Galois cover $\pi:C_{r-good}\to C$
such that a vector bundle $V$ of rank $r$ is prime to $p$ stable
iff $\pi^{\ast}V$ is stable.
\end{thmx}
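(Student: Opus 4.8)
The plan is to reduce prime to $p$ stability to stability on all connected \'etale prime to $p$ Galois covers, to prove that a family of such covers of \emph{bounded} degree already detects every failure of stability, and then to let $\pi\colon C_{r-good}\to C$ be the compositum of that finite family. Throughout I use that pulling back a destabilizing subsheaf along any separable morphism destabilizes, so failure of stability ascends freely.

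First I would dispose of ramification. Every finite separable $f\colon D\to C$ has a maximal \'etale subcover $D\to\tilde D\to C$, with $D\to\tilde D$ genuinely ramified and $\tilde D\to C$ \'etale (and $\tilde D\to C$ Galois of degree dividing $\deg f$ when $D\to C$ is). By \cite[Theorem 5.3]{bp} genuinely ramified covers preserve stability, so $f^{\ast}V$ is stable iff the pullback of $V$ to $\tilde D$ is; hence $V$ is prime to $p$ stable iff $V$ pulls back to a stable bundle on every connected \'etale prime to $p$ Galois cover of $C$.

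The heart of the matter is the following claim: there is a constant $B=B(r)$ such that if $V$ has rank $r$ and $f\colon D\to C$ is a connected \'etale prime to $p$ Galois cover with group $G$ for which $f^{\ast}V$ is not stable, then $V$ fails to be stable already on some connected \'etale prime to $p$ Galois cover $D_0\to C$ of degree $\le B$. To prove it, note $f^{\ast}V$ is semistable; its socle is canonical, hence $G$-equivariant, so it descends to a nonzero subsheaf of $V$ of slope $\mu(V)$, which is all of $V$ by stability. Thus $f^{\ast}V$ equals its socle, i.e.\ is polystable, $f^{\ast}V=\bigoplus_{i=1}^{s}U_i$ with $U_i$ the isotypic components, and $G$ permutes the $U_i$ transitively (a $G$-invariant proper partial sum would descend to a subsheaf of $V$ contradicting stability), whence $U_i=E_i^{\oplus m}$ with common multiplicity $m$ and $s\cdot m\cdot\rk E_i=r$; $f^{\ast}V$ non-stable means $s\ge 2$ or $m\ge 2$. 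If $s\ge 2$, set $G_0=\ker(G\to\mathrm{Sym}\{U_1,\dots,U_s\})$, a proper normal subgroup with $[G:G_0]\le s!\le r!$; each $U_i$ is $G_0$-equivariant, so on $D_0:=D/G_0$ the pullback of $V$ splits as $\bigoplus_i\bar U_i$ with $s\ge 2$ summands and is not stable, and $D_0\to C$ is connected \'etale prime to $p$ Galois of degree $\le r!$. If $s=1$, then $f^{\ast}V=E^{\oplus m}$ with $E$ stable, $m\ge 2$, and $g^{\ast}E\cong E$ for all $g\in G$; choosing isomorphisms $g^{\ast}E\xrightarrow{\sim}E$ yields a $2$-cocycle on $G$ whose class is $|G|$-torsion, hence a finite central extension $1\to A\to\bar G\to G\to 1$ with $A$ of order prime to $p$ through which $E$ becomes genuinely equivariant, $A$ acting on $E$ by a character $\chi$. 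The multiplicity space $M=\Hom(E,f^{\ast}V)$ is then an $m$-dimensional $\bar G$-representation on which $A$ acts by $\chi^{-1}$, and $V$ stable forces $M$ irreducible. Now apply Jordan's theorem to the finite subgroup $\bar G\to\mathrm{GL}(M)$: some normal subgroup has abelian image of index $\le J(m)\le J(r)$; enlarging it by $A$ gives a normal $\bar H\le\bar G$ containing $A$ with $[\bar G:\bar H]\le J(r)$ and $M|_{\bar H}$ reducible. Writing $H=\bar H/A\triangleleft G$ and $D_0=D/H$, a proper subrepresentation $M'\subset M|_{\bar H}$ gives (via $E\otimes M'$, on which $A$ acts trivially, so that it descends along $D\to D_0$) a proper subsheaf of the same slope of the pullback of $V$ to $D_0$; hence $V$ is not stable on $D_0$, a connected \'etale prime to $p$ Galois cover of degree $[G:H]\le J(r)$. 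Taking $B(r)=\max(r!,J(r))$ proves the claim (the $D_0$ produced is automatically nontrivial, since $V$ itself is stable).

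Finally I would assemble $C_{r-good}$. The prime to $p$ quotient of $\pi_{\et}(C)$ is topologically finitely generated, so there are only finitely many connected \'etale prime to $p$ Galois covers of $C$ of degree $\le B(r)$; let $C_{r-good}\to C$ be a connected component of their fibre product, equivalently the cover attached to the intersection of the corresponding open normal subgroups, which is again a connected \'etale prime to $p$ Galois cover. If $\pi^{\ast}V$ is stable then, pulling back destabilizing subsheaves, $V$ is stable on each of those finitely many covers, hence by the claim on every connected \'etale prime to $p$ Galois cover of $C$, hence prime to $p$ stable by the second paragraph; conversely, if $V$ is prime to $p$ stable then $\pi^{\ast}V$ is stable since $C_{r-good}\to C$ is itself one of the allowed covers. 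I expect the main obstacle to be the isotypic case $s=1$ of the claim: one must linearize the a priori only projective $G$-action on $E$ through the theta group / Schur multiplier — keeping everything of order prime to $p$, which is exactly what makes the representation theory over $k$ semisimple — and then invoke a uniform bound on finite linear groups (Jordan's theorem, or Collins' effective version) to keep $\deg D_0$ bounded in terms of $r$ alone; the case $s\ge 2$ and the profinite bookkeeping of the last paragraph are routine.
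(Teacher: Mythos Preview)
Your proof is correct and follows the same overall architecture as the paper: decompose $f^{\ast}V$ into isotypic components on which $G$ acts transitively, split into the cases $s\ge 2$ and $s=1$, invoke Jordan's theorem in the isotypic case, and then use topological finite generation of $\pi_{\et}(C)$ to dominate the finitely many covers of bounded degree.

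The implementations differ in two places. For $s\ge 2$ you pass to the kernel of the permutation action on the isotypic components, giving a Galois intermediate cover of degree $\le r!$; the paper (Lemma \ref{lemma-decomposition-small-degree}) instead takes the stabilizer of a single component, obtaining a (not necessarily Galois) intermediate cover of degree exactly $n\mid r$, which yields the sharper bound used in the construction of $X_{r-large}$. For the main case $s=1$ your treatment is cleaner: you realize the obstruction cocycle directly as a class in $H^2(G,k^{\ast})$, lift it to $\mu_{|G|}$ (possible since $|G|$ is prime to $p$ and $k$ is algebraically closed), build the theta group $\bar G$, and apply Jordan to the multiplicity representation $M=\Hom(E,f^{\ast}V)$. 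The paper instead first arranges for $\det W$ to descend via a root construction (Lemmas \ref{lemma-root-preparation}, \ref{lemma-descend-preparation-higher-dim}), then uses Tsen's theorem to build an auxiliary curve cover on which the cocycle trivializes (Lemma \ref{lemma-determinant-descend}), and finally compares linearizations (Lemma \ref{lemma-invariant-inclusion}). Your route avoids this geometric detour; the paper's route is set up this way because it is designed to work for normal projective varieties of arbitrary dimension, where the determinant-descent is arranged on $X$ and then restricted to a general complete intersection curve.

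One minor remark: your opening reduction via the genuinely ramified/\'etale factorization is not needed, since prime to $p$ stability is already defined using \'etale covers (Definition \ref{definition-fun-stab}); the only reduction required is from \'etale prime to $p$ covers to \'etale prime to $p$ \emph{Galois} covers, which follows immediately by passing to the Galois closure.
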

An analogous statement holds for $\mu$-stable bundles on a normal projective variety, see Theorem \ref{theorem-very-large-cover}.
Having identified this locus as open one should also address non-emptiness.
Recall that an open subset $U$ of a variety $X$ is called \emph{big}
if $X\setminus U$ has codimension at least $2$ in $X$.

\begin{thmx}[Theorem \ref{theorem-non-empty}]
Let $r\geq 2$.
If $C$ has genus $g_C\geq 2$, then the prime to $p$ stable locus
$M^{p'-s,r,d}_C$ is big in the moduli space of stable bundles
$M^{s,r,d}_C$. More precisely, we have
    \[
        \dim(M^{s,r,d}_C\setminus M^{p'-s,r,d}_C)\leq rr_0(g_C-1)+1,
	 \]
where $r_0$ denotes the largest proper divisor of $r$.
If $p$ is not the smallest proper divisor of $r$, then equality holds.
\end{thmx}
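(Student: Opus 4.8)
\noindent\emph{Proof idea.} The plan is to prove the displayed inequality in both directions and to deduce bigness from it: since $\dim M^{s,r,d}_C=r^2(g_C-1)+1$, the bound gives codimension $r(r-r_0)(g_C-1)\geq 2$. For the upper bound, fix the connected \'etale prime to $p$ Galois cover $\pi\colon C_{r-good}\to C$ of Theorem~\ref{theorem-very-large-cover}, with Galois group $G$ (finite, of order prime to $p$). By that theorem the complement $M^{s,r,d}_C\setminus M^{p'-s,r,d}_C$ consists exactly of the stable $V$ for which $\pi^{\ast}V$ is not stable. I would first show $\pi^{\ast}V$ is then \emph{polystable}: it is semistable because $\pi$ is finite \'etale, and its socle (the maximal polystable subsheaf of slope $\mu(\pi^{\ast}V)$) is canonical, hence $G$-invariant; were it proper it would descend along $C_{r-good}\to C$ to a subsheaf of $V$ of slope $\mu(V)$ and smaller rank, contradicting stability. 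Writing $\pi^{\ast}V\cong\bigoplus_{j=1}^{t}S_j^{\oplus a}$ with the $S_j$ pairwise non-isomorphic stable, the same descent argument applied to the sum of a single $G$-orbit of isotypic components forces $G$ to act transitively on $\{S_1,\dots,S_t\}$; hence all multiplicities agree, and with $r'=\rk S_j$ one gets $r=tar'$.

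Assume $t\geq 2$. Put $H=\mathrm{Stab}_G(S_1)$ and $D'=C_{r-good}/H$, so $\pi'\colon D'\to C$ is \'etale of degree $[G:H]=t$, and $t\mid r$ with $t\geq 2$ gives $t\geq r/r_0$. The $H$-invariant isotypic summand $S_1^{\oplus a}$ descends to a semistable bundle $E$ of rank $r/t$ on $D'$ together with an embedding $E\hookrightarrow\pi'^{\ast}V$; by adjunction this is a nonzero map $\pi'_{\ast}E\to V$ of semistable sheaves of slope $\mu(V)$ and equal rank $r$, hence an isomorphism $V\cong\pi'_{\ast}E$, whence moreover $E$ is stable (a destabilising subsheaf of $E$ would push forward to one of $V$) and $\deg E=d$ (as $\pi'$ is \'etale). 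So $V$ lies in the image of $E\mapsto\pi'_{\ast}E$ on $M^{s,r/t,d}_{D'}$, a map that is finite-to-one because $E$ is a direct summand of the fixed bundle $\pi'^{\ast}V$; thus the image has dimension at most $(r/t)^2(g_{D'}-1)+1=(r/t)^2t(g_C-1)+1=(r^2/t)(g_C-1)+1\leq rr_0(g_C-1)+1$. There are finitely many subgroups $H\leq G$, so the union of these loci over all $t$ and $D'$ still has dimension $\leq rr_0(g_C-1)+1$.

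The case $t=1$, where $\pi^{\ast}V\cong S_1^{\oplus a}$ with $a\geq 2$ and $a\mid r$, is the main obstacle: here parametrising $V$ by $S_1$ on $C_{r-good}$ is useless, since $\deg(C_{r-good}/C)$ is large. The approach I would take is that $\SheafHom(S_1,S_1)$ carries a \emph{genuine} $G$-equivariant structure — the scalar ambiguity in $g^{\ast}S_1\cong S_1$ dies in $\SheafHom$ — and so descends to an Azumaya algebra of degree $r/a$ on $C$ whose Brauer class is killed by $\pi$; one then recognises $V$, up to finitely many choices, as a stable twisted vector bundle of rank $r/a$ for that class. Such twisted bundles have the same deformation theory, hence the same dimension count $(r/a)^2(g_C-1)+1$, as ordinary bundles, and $a\mid r$ with $a\geq 2$ gives $a\geq r/r_0$, so $(r/a)^2\leq rr_0$ and this locus too has dimension at most $rr_0(g_C-1)+1$. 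Combining the three cases proves the upper bound, hence bigness.

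For the reverse inequality when $p\neq r/r_0$ (equivalently, $p$ is not the smallest prime dividing $r$; this includes $p=0$), write $q=r/r_0$ and take a line bundle of exact order $q$ on $C$ — available since $g_C\geq 1$ and $q\neq p$ — producing a connected cyclic \'etale cover $\rho\colon C'\to C$ of degree $q$ with $g_{C'}-1=q(g_C-1)$. Then $M^{s,r_0,d}_{C'}$ is nonempty of dimension $r_0^2(g_{C'}-1)+1=r_0^2q(g_C-1)+1=rr_0(g_C-1)+1$. For $W$ in the dense open subset on which $g^{\ast}W\not\cong W$ for all $g\neq 1$, the bundle $\rho_{\ast}W$ has rank $r$ and degree $d$, is stable (a destabilising subsheaf would pull back to a $\mathbb{Z}/q$-invariant sub-sum of $\rho^{\ast}\rho_{\ast}W\cong\bigoplus_g g^{\ast}W$, impossible as the $g^{\ast}W$ are pairwise non-isomorphic), while $\rho^{\ast}\rho_{\ast}W$ is a direct sum of $q\geq 2$ non-isomorphic stable bundles, hence not stable; thus $\rho_{\ast}W\in M^{s,r,d}_C\setminus M^{p'-s,r,d}_C$. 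Since $W$ is a direct summand of $\rho^{\ast}\rho_{\ast}W$, the assignment $W\mapsto\rho_{\ast}W$ is finite-to-one, so its image is a locus of dimension $rr_0(g_C-1)+1$ inside the complement, giving the asserted equality.
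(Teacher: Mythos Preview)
Your argument is correct and follows the same strategy as the paper: stratify the complement by the decomposition type of $\pi^{\ast}V$ on $C_{r\text{-}good}$ (Lemma~\ref{lemma-pullback-galois}), bound the $t\geq 2$ stratum via an intermediate cover of degree $t$, bound the $t=1$ stratum via the projective descent of the $G$-invariant stable summand, and realise the lower bound by pushforward from a prime-to-$p$ cyclic cover of degree $r/r_0$. The packaging differs in two places---the paper parametrises the $n\geq 2$ stratum by pulling everything up to $C_{r\text{-}good}$ and invoking finiteness of $\pi^{\ast}$ (Theorem~\ref{theorem-finite-fibres}) rather than identifying $V\cong\pi'_{\ast}E$, and it handles the $n=1$ stratum by the concrete Lemma~\ref{lemma-jochen} (there is a line bundle $L$ with $W\otimes L$ descending to $C$, then counts pairs $(N,L)$ with $L^{\otimes r}$ descending) rather than via twisted sheaves---but both routes rest on the same input, namely $\check H^2_{\et}(C,\mathbb{G}_m)=0$, and yield the same bound $r_0^2(g_C-1)+1$ for that stratum; your twisted-sheaf phrasing hides a few finiteness checks (finiteness of $H^2(G,k^{\ast})$, of $\Hom(G,k^{\ast})$, and of the fibres of $\pi^{\ast}$) that the paper's parametrisation makes explicit.
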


By considering $d=0$ in Theorem $2$, we obtain the different behaviour of the \'etale fundamental group, i.e., the non-density of the \'etale trivializable bundles in characteristic $0$
versus their density in positive characteristic.
\begin{corx}
    Let $C$ be a smooth projective curve of genus $g_C\geq 2$.
    Let $r\geq 2$. Then the stable bundles of rank $r$
    that are trivialized on a prime to $p$ \'etale cover
    are not dense in $M^{s,r,0}_C$.
\end{corx}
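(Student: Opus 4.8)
The plan is to realize the set in question as a subset of $M^{s,r,0}_C\setminus M^{p'-s,r,0}_C$ and then read off that the latter is not dense from the dimension estimate of Theorem~2. The whole argument is short, the substance already sitting in Theorems~1 and~2; the one point that takes a little care is to reconcile being trivialized on an arbitrary prime to $p$ \'etale cover with the connected Galois prime to $p$ covers appearing in the definition of prime to $p$ stability.

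First I would show that a stable bundle $V$ of rank $r\ge 2$ and degree $0$ which becomes trivial on some \'etale cover of $C$ of degree prime to $p$ already becomes trivial on a connected Galois \'etale cover $\pi\colon D\to C$ of degree prime to $p$ and $>1$. Passing to a suitable connected component, the given cover may be assumed connected, of some degree $n$ with $p\nmid n$. Such a $V$ corresponds to a representation $\rho\colon\pi_{\et}(C)\to GL_r(k)$ with finite image $G$: descend the trivial bundle from the Galois closure of the cover, the descent datum being $k$-linear because that closure is connected. Since the fundamental group of the trivializing cover lies in $\ker\rho$ and indices multiply in towers, $|G|=[\pi_{\et}(C):\ker\rho]$ divides $n$ and is therefore prime to $p$; the connected Galois cover $\pi\colon D\to C$ attached to the open normal subgroup $\ker\rho$ then has degree $|G|$ prime to $p$ and satisfies $\pi^{\ast}V\cong\mathcal O_D^{\oplus r}$. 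Here $|G|>1$, since a trivial bundle of rank $\ge 2$ is not stable and hence $V\ne\mathcal O_C^{\oplus r}$. This reduction is the step I expect to require the most attention, although it stays elementary.

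Then I would observe that for $r\ge 2$ the bundle $\mathcal O_D^{\oplus r}$ is not stable — it has the proper subbundle $\mathcal O_D$ of the same slope $0$ — so $\pi^{\ast}V$ is not stable, whence $V$ is not prime to $p$ stable and $V\in M^{s,r,0}_C\setminus M^{p'-s,r,0}_C$. By Theorem~1 the prime to $p$ stable locus is open, so this complement is closed, and by Theorem~2 with $d=0$ its dimension is at most $rr_0(g_C-1)+1<r^2(g_C-1)+1=\dim M^{s,r,0}_C$, the strict inequality using $r_0<r$ (it is a proper divisor of $r$) and $g_C\ge 2$, and the last equality being the classical dimension of the moduli space for $g_C\ge 2$. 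Thus $M^{s,r,0}_C\setminus M^{p'-s,r,0}_C$ is a proper closed subset of $M^{s,r,0}_C$, hence not dense; since the locus of stable rank $r$, degree $0$ bundles trivialized on a prime to $p$ \'etale cover is contained in it, that locus is not dense either, which is the assertion.
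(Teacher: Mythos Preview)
Your proposal is correct and follows the same route as the paper: a bundle trivialized on a prime to $p$ \'etale cover is not prime to $p$ stable, hence lies in the closed set $M^{s,r,0}_C\setminus M^{p'-s,r,0}_C$, which is proper by the dimension estimate of Theorem~2. Your first step (passing to a connected Galois cover) is more work than needed---Definition~\ref{definition-fun-stab} quantifies over all finite \'etale prime to $p$ covers, not just Galois ones, so the trivializing cover itself already witnesses the failure of prime to $p$ stability---but the argument you give there is correct and harmless.
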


In rank $2$ and characteristic $0$ 
such a non-density result has been independently obtained by Ghiasabadi and Reppen,
see \cite[Corollary 4.16]{gr}.

We also note that the density of the \'etale trivializable bundles in positive
characteristic means that we can not extend Theorem 1 nor Theorem 2 to include all covers;
allowing only for covers of degree prime to the characteristic is crucial.

The key observation in proving Theorem 1 is that while stability is in general 
not preserved under pullback by a Galois cover $D\to C$ 
polystability is. In fact, we can say more: 
A stable vector bundle $V$ on $C$ decomposes on $D$
into a direct sum $\bigoplus_{i=1}^n W_i^{\oplus e}$ of pairwise non-isomorphic 
stable bundles $W_i$ all appearing with the same multiplicity $e$.
Furthermore, the Galois
group of $D/C$ acts transitively on the isomorphism classes of the $W_i$, see Lemma \ref{lemma-pullback-galois}.

The construction of the cover $C_{r-good}$ checking for prime to $p$ stability is then split into two parts: 
A cover $C_{r-large}$ checking for the decomposition behaviour if $n\geq 2$ and a cover $C_{r-good}$ including $n=1$.

The cover $C_{r-large}$ is easily constructed using the transitive action of the Galois group.
To include the case $n=1$ the difficulty arises that while all the conjugates of $W=W_1$ by the Galois group are isomorphic
these isomorphisms might not be compatible. We provide a workaround for descending simple invariant bundles.

Pretending that $W$ descends for now allows for a comparison of the linearizations of $V$ on $D$ and $W^{\oplus e}$.
This gives rise to a $\gl_e$ representation of the Galois group. Finite subgroups prime to the characteristic of $\gl_e$ are 
well-understood. By Jordan's theorem - which in positive characteristic is due Brauer and Feit - 
they are close to being abelian. This allows us to find a cover which also checks for this decomposition behaviour.

The same type of cover works in higher dimensions. However, the workaround for descend only works for curves.
To obtain Theorem 1 in higher dimensions we carefully set up the requirements 
for the workaround of descend and then use a restriction theorem for stability to reduce to dimension $1$.

Theorem 2 is obtained by a dimension estimate on the strata defined by the decomposition behaviour of a stable bundle.

The paper is structured as follows.
In §2 we define functorial notions of stability and study them for genus $g_C\leq 1$. 
We also collect some preliminary properties of (semi)stable bundles under pullback 
as well as a descend lemma for (not necessarily \'etale) flat Galois covers of normal varieties.

In §3 we prove the key lemma. Then we construct the prime to $p$ cover $C_{r-good}$ that checks whether a vector bundle is prime to $p$ stable.

In §4 we investigate certain strata which arise as the complement of the prime to $p$ stable locus 
and estimate their dimension. We work with arbitrary \'etale Galois covers
and obtain Theorem $2$ by considering the cover constructed in Theorem $1$.
We also provide a descend lemma for \'etale cyclic covers 
which may be of independent interest, see Lemma \ref{lemma-cyclic-descend}.

\subsection*{Notation}
    We work over an algebraically closed field $k$ of characteristic $p\geq 0$.
    A \emph{variety} is a separated integral scheme of finite type over $k$.
    A \emph{curve} is a variety of dimension $1$.
    The \emph{function field} of a variety $X$ is denoted by $\kappa(X)$.
 
    If $X$ is a projective variety, then we implicitly choose an ample bundle
    $\mathcal{O}_X(1)$ on $X$. If we consider a finite morphism $\pi:Y\to X$ we
    set $\mathcal{O}_Y(1)=\pi^{\ast}\mathcal{O}_X(1)$.
    By \emph{(semi)stability} we mean $\mu$-(semi)stability
    of reflexive sheaves with respect to $\mathcal{O}_X(1)$.
	
    We denote the \emph{moduli space} of (semi)stable vector bundles of 
    rank $r$ and degree $d$ on a smooth projective 
    curve $C$ by $M^{s,r,d}_C$ (resp. $M^{ss,r,d}_C$).

    Given a morphism $\pi:Y\to X$ of varieties and a sheaf $F$ on $X$ 
    we denote the pullback $\pi^{\ast}F$ also by $F_{\mid Y}$.
 
    By a \emph{cover} $Y\to X$ of varieties we mean a finite separable morphism of 
    varieties, i.e., a finite dominant morphism such that the extension of function 
    fields $\kappa(Y)/\kappa(X)$ is separable.
    A cover is called \emph{Galois} if the extension of function fields 
    $\kappa(Y)/\kappa(X)$ is Galois.
    An \'etale (Galois) cover is a (Galois) cover $Y\to X$ which is \'etale.
	
\section{First observations}
We start by collecting some elementary results on pullback and semistability as well 
as descent theory for flat Galois covers, which is slightly trickier than for 
\'etale Galois covers.
Then we introduce the functorial notions of stability 
and give a complete analysis for smooth projective curves of genus $\leq 1$.

\subsection{Preliminaries on Stability and Pullback}
In this subsection we recall several notions of stability
as well as the basic properties of $\mu$-(semi)stable vector bundles under
pullback. We also include a descent lemma along (possibly non-\'etale) flat 
Galois covers in terms of linearizations.

We begin by recalling semistability, the reader is referred to \cite[Chapter 1, 4]{hl} for a detailed account. On a smooth projective curve $C$ we have two numerical invariants attached to a vector bundle $V$: the \emph{rank} $\rk(V)$
and the \emph{degree} $\deg(V)$. 
This allows us to define the \emph{slope} $\mu(V):=\deg(V)/\rk(V)$ which in turn is used to define (semi)stability. The vector bundle $V$ is called \emph{semistable}
if for all subbundles $0\neq W\subsetneq V$ we have $\mu(W)\leq \mu(V)$.
It is called \emph{stable} if the inequality is strict for all subbundles $0\neq W\subsetneq V$.

These notions are tailored to obtain a moduli space of semistable vector bundles of rank $r$ and degree $d$ which we denote by $M^{ss,r,d}_C$. The closed points of $M^{ss,r,d}_C$ correspond to the \emph{polystable} vector bundles of rank $r$ and degree $d$, i.e., vector bundles
which are a direct sum of stable bundles of the same slope $d/r$.
The moduli space of stable bundles $M^{s,r,d}_C$ is an open subset of $M^{ss,r,d}_C$.

On a normal projective variety $X$ of dimension $\geq 2$
there are several analogues to (semi)stability on a curve.
On the one hand, we have more numerical invariants attached to a coherent sheaf $\mathcal{F}$: (the coefficients of) the Hilbert polynomial 
\[
P(\mathcal{F})(n)=\sum_{i=0}^{\dim(X)}\frac{\alpha_i(\mathcal{F})}{i!}n^i.
\]
On the other hand, the Hilbert polynomial depends on the choice of a polarization $\mathcal{O}_X(1)$ of $X$. We implicitly fix the polarization - also see the notations.

A torsion-free coherent sheaf $\mathcal{F}$ on $X$ is called \emph{Gieseker-semistable} if for all saturated subsheaves $0\neq \mathcal{G}\subsetneq \mathcal{F}$
we have $p(\mathcal{G})\leq p(\mathcal{F})$, where $p(\mathcal{F}):=P(\mathcal{F})/\rk(F)$ is the \emph{reduced Hilbert polynomial}.
The ordering is via the lexicographic ordering on the coefficients of the polynomials starting in the highest degree.
The torsion-free coherent sheaf $\mathcal{F}$ is called \emph{Gieseker-stable}
if the above inequality is strict. As in the curve case these notions lend themselves to a construction of a moduli space of Gieseker semistable torsion-free sheaves.
%which we denote by $M^{G-ss,P}_X$.

In this paper we are mostly concerned with the the notion of \emph{$\mu$-stability} which we also abbreviate to \emph{stability}: the \emph{slope} of a coherent sheaf $\mathcal{F}$ which is torsion-free on a big open subset
is defined as 
\[
    \mu(\mathcal{F}):=\deg(\mathcal{F})/\rk(\mathcal{F}),
\]
where the \emph{degree} is defined as
\[
    \deg(\mathcal{F}):=\alpha_{\dim(X)-1}(\mathcal{F})-\rk(\mathcal{F})\alpha_{\dim(X)-1}(\mathcal{O}_X).
\]
We call a \emph{reflexive} sheaf $\mathcal{F}$ \emph{semistable} if for
all saturated subsheaves $0\neq \mathcal{G}\subsetneq \mathcal{F}$ of smaller rank we have $\mu(\mathcal{G})\leq\mu(\mathcal{F})$. A reflexive sheaf $\mathcal{F}$ is \emph{stable} if the above inequality is strict. Further, $\mathcal{F}$ is \emph{polystable} if it is a direct sum of stable sheaves of the same slope $\mu(\mathcal{F})$.

We note that the degree of $\mathcal{F}$ only depends
on its isomorphism class on some big open subset of $X$. 
In particular, we have
$\mu(\mathcal{F})=\mu(\mathcal{F}^{\lor\lor})$, where $\mathcal{F}^{\lor\lor}$
denotes the \emph{reflexive hull} of $\mathcal{F}$.

A cover $Y\to X$ of normal
varieties is flat on a big open subset. As the slope only depends on the isomorphism
class on a big open subset, this is the right setting to study pullback. The basic results are as follows:
\begin{lemma}
\label{lemma-stability-pullback}
	Let $\pi:Y\to X$ be a cover of normal projective
	varieties of degree $d$.
	Let $\mathcal{F}$ be a reflexive sheaf on $X$ and 
        $\mathcal{G}$ be a torsion free sheaf on
	$Y$. Then the following hold:
	\begin{enumerate}[(i)]
		\item $\mu(\mathcal{G})=d(\mu(\pi_{\ast}\mathcal{G})-\mu(\pi_{\ast}\mathcal{O}_Y))$.
		\item $\mu((\mathcal{F}_{\mid Y})^{\lor \lor})=d\mu(\mathcal{F})$.
		\item $\mathcal{F}$ is semistable iff 
            $(\mathcal{F}_{\mid Y})^{\lor \lor}$ 
            is semistable.  
		\item If $\mathcal{F}$ is polystable and $Y\to X$ is Galois, then
			$(\mathcal{F}_{\mid Y})^{\lor \lor}$ is polystable. 
                 If $\pi$ is prime to $p$,
			then $\mathcal{F}$ is polystable iff 
                $(\mathcal{F}_{\mid Y})^{\lor\lor}$ is polystable.
		\item If $(\mathcal{F}_{\mid Y})^{\lor \lor}$ is stable, then so is i              $\mathcal{F}$.
	\end{enumerate}
\end{lemma}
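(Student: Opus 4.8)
The plan is to prove the five statements more or less in the order given, since the later ones build on the earlier ones. Throughout I would work on a big open subset $U\subseteq X$ over which $\pi$ is finite flat of degree $d$ and every sheaf in sight is locally free; this is harmless because both $\deg$ and the property of (semi)stability only depend on the restriction to a big open. For (i), I would compute the degree of $\pi_\ast\mathcal{G}$ via the projection formula and the behaviour of Euler characteristics under a finite flat morphism: $\chi(Y,\mathcal{G}\otimes\pi^\ast\mathcal{O}_X(n))=\chi(X,\pi_\ast\mathcal{G}\otimes\mathcal{O}_X(n))$, which on the level of leading Hilbert-polynomial coefficients gives $\rk(\pi_\ast\mathcal{G})=d\,\rk(\mathcal{G})$ and an additive relation among the $\alpha_{\dim-1}$ terms. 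Rearranging, and using the same identity applied to $\mathcal{G}=\mathcal{O}_Y$ to isolate the $\alpha_{\dim X-1}(\mathcal{O}_X)$-normalization, yields the stated formula $\mu(\mathcal{G})=d(\mu(\pi_\ast\mathcal{G})-\mu(\pi_\ast\mathcal{O}_Y))$. Then (ii) is the special case $\mathcal{G}=(\mathcal{F}_{\mid Y})^{\lor\lor}$ combined with the observation that on $U$ we have $\pi^\ast\mathcal{F}$ locally free, so $\pi_\ast\pi^\ast\mathcal{F}\cong\mathcal{F}\otimes\pi_\ast\mathcal{O}_Y$ by the projection formula, whence $\mu(\pi_\ast\pi^\ast\mathcal{F})=\mu(\mathcal{F})+\mu(\pi_\ast\mathcal{O}_Y)$ (ranks multiply by $d$ and degrees add up in the standard way for a tensor product with a bundle of rank $d$); plugging into (i) gives $d\mu(\mathcal{F})$.

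For (iii), the implication "$\mathcal{F}$ semistable $\Rightarrow$ pullback semistable" in the separable/Galois-free generality is the subtle direction; I would reduce to dimension one by a restriction theorem (Mehta--Ramanathan type) so that it suffices to treat curves, where semistability of $\pi^\ast\mathcal{F}$ follows because any destabilizing subbundle on $Y$ would, after pushforward and intersection with $\mathcal{F}\subseteq\pi_\ast\pi^\ast\mathcal{F}$, produce a destabilizing subsheaf of $\mathcal{F}$ — here one uses (i)/(ii) to match slopes. The converse, "$(\mathcal{F}_{\mid Y})^{\lor\lor}$ semistable $\Rightarrow\mathcal{F}$ semistable", is immediate: a saturated destabilizing $\mathcal{G}\subsetneq\mathcal{F}$ pulls back to a subsheaf of $(\mathcal{F}_{\mid Y})^{\lor\lor}$ (after reflexive hull) whose slope is $d\mu(\mathcal{G})>d\mu(\mathcal{F})=\mu((\mathcal{F}_{\mid Y})^{\lor\lor})$ by (ii), a contradiction. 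Statement (v) for stability is the same argument with strict inequalities, noting that if $(\mathcal{F}_{\mid Y})^{\lor\lor}$ is stable it is in particular semistable so $\mathcal{F}$ is semistable, and any subsheaf of $\mathcal{F}$ of equal slope would pull back to one of $(\mathcal{F}_{\mid Y})^{\lor\lor}$ of equal slope, violating stability.

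For (iv), the forward direction uses Galois descent: write $G=\mathrm{Gal}(\kappa(Y)/\kappa(X))$, decompose $(\mathcal{F}_{\mid Y})^{\lor\lor}$ into its isotypic/polystable pieces, and observe that $G$ permutes the stable summands (all of the same slope $d\mu(\mathcal{F})$ by (ii)); the canonical linearization coming from pullback shows the $G$-invariant part recovers $\mathcal{F}$ up to reflexive hull, and since the pullback is a sum of stable bundles it is by definition polystable — in fact one should cite the descent lemma for flat Galois covers promised in the text (in terms of linearizations) to make "the $G$-invariants give back $\mathcal{F}$" precise. For the prime-to-$p$ converse, the averaging/transfer argument works: from a polystable $(\mathcal{F}_{\mid Y})^{\lor\lor}$ one splits off stable summands of $\mathcal{F}$ by taking images of $G$-equivariant projectors, which exist because $|G|$ (equivalently $d$) is invertible in $k$; combined with semistability of $\mathcal{F}$ from (iii) this forces $\mathcal{F}$ to be a direct sum of stable sheaves of slope $\mu(\mathcal{F})$, i.e. polystable. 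The main obstacle I anticipate is the "hard" direction of (iii) — establishing that $\mu$-semistability is preserved under an arbitrary (not necessarily Galois, possibly wildly ramified) separable pullback in dimension $\geq 2$ — which genuinely needs a restriction theorem to descend to curves, plus care that the reflexive hulls do not change slopes; everything else is bookkeeping with the projection formula and, in (iv), the flat Galois descent lemma cited from earlier in the section.
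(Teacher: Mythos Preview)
Your treatment of (i), (ii), (v), and the easy direction of (iii) is correct and essentially what the paper does (the paper simply cites \cite[Lemma 3.2.1--3.2.3]{hl} for (i)--(iv) and gives your pullback-of-a-destabilizing-subsheaf argument for (v)). The genuine gap is in the hard direction of (iii). Your proposed curve-level step---push forward a destabilizing $W\subset\pi^\ast\mathcal{F}$ and intersect with $\mathcal{F}\subset\pi_\ast\pi^\ast\mathcal{F}$---does not produce a destabilizing subsheaf of $\mathcal{F}$: intersections give no control on slopes, and $\mu(\pi_\ast W)$ is in any case shifted by $\mu(\pi_\ast\mathcal{O}_Y)\le 0$, so (i) does not help here. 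The Mehta--Ramanathan reduction is therefore a red herring; even on curves you still need the missing idea. That idea, which works directly in every dimension, is: pass to the Galois closure, note that the \emph{maximal} destabilizing subsheaf of $(\mathcal{F}_{\mid Y})^{\lor\lor}$ is \emph{unique} and hence $G$-invariant, and then descend it via Lemma~\ref{lemma-invariant-subsheaf} (applied on the big flat locus) to a saturated destabilizing subsheaf of $\mathcal{F}$. The mechanism ``uniqueness $\Rightarrow$ invariance $\Rightarrow$ descent'' is exactly what the paper invokes when it says Lemma~\ref{lemma-invariant-subsheaf} is applied ``to the destabilizing subsheaf as well as the socle''.

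The same mechanism repairs (iv), where your forward-direction sketch is circular as written (you decompose into stable summands in order to conclude the sheaf is a sum of stable summands). The correct argument reduces to $\mathcal{F}$ stable and uses that the \emph{socle} of $(\mathcal{F}_{\mid Y})^{\lor\lor}$---its unique maximal polystable subsheaf---is $G$-invariant and hence descends to a subsheaf of $\mathcal{F}$ of slope $\mu(\mathcal{F})$; by stability this is all of $\mathcal{F}$, so the socle is everything and the pullback is polystable. Your averaging idea for the prime-to-$p$ converse is correct in spirit: this is precisely the ``splitting of the trace'' the paper refers to, which requires $\deg(\pi)$ to be invertible in $k$.
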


\begin{proof}
    (i) - (iv) are proven in \cite[Lemma 3.2.1 - 3.2.3]{hl}.
    Note that the proofs are independent of the characteristic except for            
    \cite[Lemma 3.2.3]{hl}.
    Here the additional prime to $p$ assumption saves the splitting of the trace.
	
    These results use descent for Galois covers which is a bit trickier than
    for \'etale ones. We spell this out
    in Lemma \ref{lemma-invariant-subsheaf} for flat Galois covers. While
    a Galois cover may be non-flat in general the flat locus is a big open subset.
    The slope only depends on the isomorphism class on an big open subset and Lemma \ref{lemma-invariant-subsheaf} can then be applied to the destabilizing subsheaf 
    as well as the socle after restricting to the flat locus.

    (v): A proper subsheaf of $\mathcal{F}$
    of slope $\geq \mu(\mathcal{F})$ pulls back to a proper subsheaf  
    of $\mathcal{F}_{\mid Y}$ on a big open subset of $Y$ of slope $\geq \mu(\mathcal{F}_{\mid Y})$ by (ii). The claim follows.
\end{proof}

We recall the notions of $G$-invariance and $G$-linearization and prove a descend lemma under flat Galois covers for the latter.
\begin{definition}
    Let $Y\to X$ be a Galois cover of normal varieties with Galois group $G$.
    Thinking of $Y$ as the normal closure of $X$ in $\kappa(Y)$ we obtain an action of $G$ on $Y/X$.
    
    A \emph{$G$-invariant} torsion-free sheaf $V$ on $Y$ is 
    a torsion-free sheaf $V$ together with isomorphisms $\psi_{\sigma}:V\xrightarrow{\sim} \sigma^{\ast}V$ for all $\sigma\in G$.
    By a slight abuse of notation we suppress the choice of the isomorphisms
    and call $V$ a $G$-invariant torsion-free sheaf.
    
    A subsheaf $W\subseteq V$ of a $G$-invariant torsion-free sheaf $V$ is
    called {\it $G$-invariant} if the isomorphisms
    $\psi_{\sigma}:V\xrightarrow{\sim} \sigma^{\ast}V$ induce isomorphisms $W\xrightarrow{\sim}\sigma^{\ast}W$ of subsheaves.

    A torsion-free sheaf $V$ on $Y$ is said to admit a {\it $G$-linearization}
    if for all $\sigma \in G$ there exists an isomorphism $\psi_{\sigma}:V\xrightarrow{\sim} \sigma^{\ast}V$ 
    such that $\tau^{\ast}\psi_{\sigma}\circ \psi_{\tau}=\psi_{\sigma\tau}$ for all $\sigma,\tau\in G$.
\end{definition}

\begin{remark}
    By definition a $G$-invariant subsheaf $W\subseteq V$ of a torsion-free sheaf
    admitting a $G$-linearization
    admits a $G$-linearization as well.
\end{remark}

For an \'etale Galois cover a linearization is the same as a descent-datum. 
This is in general not true for Galois covers or even flat Galois covers, see Example \ref{example-non-descend}.
There is however a version for an invariant saturated subsheaf of a torsion-free sheaves which descends:

\begin{lemma}
\label{lemma-invariant-subsheaf}
    Let $Y\to X$ be a flat Galois cover of normal varieties with Galois group $G$.
    Let $V$ be a torsion-free sheaf on $X$.
    Then a $G$-invariant saturated subsheaf of $V_{\mid Y}$
    descends to a saturated subsheaf of $V$.
\end{lemma}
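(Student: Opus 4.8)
The plan is to use faithfully flat descent along $Y \to X$ (over the flat locus, which is a big open subset, this is genuine fppf descent), combined with the observation that a $G$-invariant \emph{saturated} subsheaf carries no nontrivial automorphisms obstructing the cocycle condition. Concretely, let $W \subseteq V_{\mid Y}$ be the given $G$-invariant saturated subsheaf, with the structural isomorphisms $\psi_\sigma : V_{\mid Y} \xrightarrow{\sim} \sigma^\ast V_{\mid Y}$ (which form a linearization, since $V_{\mid Y}$ is pulled back from $X$) inducing $\phi_\sigma : W \xrightarrow{\sim} \sigma^\ast W$. The first step is to pass to the flat locus $X_0 \subseteq X$, whose preimage $Y_0 \subseteq Y$ is big and over which $Y_0 \to X_0$ is finite flat Galois, hence an fppf cover; by normality and reflexivity/saturation, descending a saturated subsheaf over $X_0$ and taking the saturation of its extension over all of $X$ recovers a saturated subsheaf of $V$ with the required restriction. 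So it suffices to descend $W_{\mid Y_0}$.

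The second step is the descent datum itself. For a \emph{flat} Galois cover the fibre product $Y_0 \times_{X_0} Y_0$ is, generically on each component, a disjoint union of copies of $Y_0$ indexed by $G$ (more precisely the normalization is), so an isomorphism of the two pullbacks of $W_{\mid Y_0}$ to $Y_0 \times_{X_0} Y_0$ is the same as a family $(\phi_\sigma)_{\sigma \in G}$ of isomorphisms $W \xrightarrow{\sim} \sigma^\ast W$, and the cocycle condition on $Y_0 \times_{X_0} Y_0 \times_{X_0} Y_0$ translates into $\tau^\ast\phi_\sigma \circ \phi_\tau = \phi_{\sigma\tau}$. We already have such $\phi_\sigma$ from the $G$-invariance, but they are a priori only a ``pseudo-linearization'': the discrepancy $c_{\sigma,\tau} := \phi_{\sigma\tau}^{-1} \circ \tau^\ast\phi_\sigma \circ \phi_\tau$ is an automorphism of $W$. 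The key point is that $W$, being a \emph{saturated} subsheaf of $V_{\mid Y_0}$ with $V$ pulled back from $X_0$, inherits the linearization $\psi_\sigma$ of $V_{\mid Y_0}$, and the $\phi_\sigma$ are the restrictions of $\psi_\sigma$; hence $c_{\sigma,\tau}$ is the restriction of the automorphism $\psi_{\sigma\tau}^{-1}\circ \tau^\ast\psi_\sigma\circ\psi_\tau = \mathrm{id}$ of $V_{\mid Y_0}$. Therefore $c_{\sigma,\tau} = \mathrm{id}$, the cocycle condition holds on the nose, and fppf descent produces a subsheaf $W_0 \subseteq V_{\mid X_0}$ with $W_{0\mid Y_0} = W_{\mid Y_0}$.

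The third step is globalization and saturation: extend $W_0$ to a coherent subsheaf of $V$ (push forward along $X_0 \hookrightarrow X$ and intersect with $V$, using that $X \setminus X_0$ has codimension $\geq 2$ and $V$ is reflexive, or just take the saturation in $V$ of any coherent extension) and replace it by its saturation $\widetilde{W} \subseteq V$; then $\widetilde{W}$ is saturated in $V$, and $\widetilde{W}_{\mid Y}$ agrees with $W$ on the big open set $Y_0$, hence — both being saturated in the torsion-free sheaf $V_{\mid Y}$ and agreeing in codimension $\leq 1$ — everywhere. I expect the main obstacle to be a clean bookkeeping of the ``pseudo-linearization vanishes because $W$ sits inside the linearized $V_{\mid Y}$'' argument: one must be careful that the isomorphisms witnessing $G$-invariance of $W$ really are the restrictions of the canonical linearization of $V_{\mid Y}$ (this is exactly the content of the definition of a $G$-invariant subsheaf of a sheaf admitting a $G$-linearization, cf.\ the Remark preceding the lemma), and that the flat-locus reduction does not secretly lose the cocycle data. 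The descent theory over $Y_0 \to X_0$ is standard fppf descent of quasi-coherent sheaves, so that part is routine.
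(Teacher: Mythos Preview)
Your proposal has a genuine gap at its core: the assertion that a $G$-linearization on $W$ is ``the same as'' an fppf descent datum along the flat Galois cover $Y\to X$. This is precisely what fails for non-\'etale Galois covers, and the paper says so explicitly just before the lemma (with Example~\ref{example-non-descend} exhibiting a $G$-linearized bundle that does not descend). You acknowledge that $Y\times_X Y$ is only \emph{generically} a disjoint union of copies of $Y$, but then proceed as if the identification held globally. Concretely, to run fppf descent for a subsheaf you need $p_1^\ast W = p_2^\ast W$ as subsheaves of $V_{\mid Y\times_X Y}$; the $G$-linearization only gives this equality after pulling back along the normalization $\coprod_\sigma Y \to Y\times_X Y$, and that map is not flat, so equality upstairs does not imply equality on $Y\times_X Y$. (The non-saturated counterexample in the paper, the ideal of a ramification point, shows exactly this failure: $(s)$ and $(t)$ in $k[s,t]/(s^2-t^2)$ agree on the normalization but differ downstairs.) You never actually use the saturation hypothesis to close this gap; it appears only in the vacuous step~1 (the cover is already flat by hypothesis, so $X_0=X$) and in step~3, where you also invoke reflexivity of $V$, which is not assumed.

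The paper's proof sidesteps the whole issue by never attempting fppf descent on $Y\times_X Y$. Instead it restricts to the generic point, where $\kappa(Y)/\kappa(X)$ is a genuine $G$-torsor and classical Galois descent produces a subspace $W'_{\eta_X}\subseteq V_{\eta_X}$. Then Langton's uniqueness result (a saturated subsheaf of a torsion-free sheaf on a variety is determined by its generic fibre) is used twice: once to extend $W'_{\eta_X}$ to a saturated $W'\subseteq V$ on $X$, and once on $Y$ to conclude that the saturated subsheaves $W'_{\mid Y}$ and $W$ coincide since they agree generically. This is where saturation does the real work, and it is applied on the integral varieties $X$ and $Y$ rather than on the non-integral fibre product.
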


\begin{proof}
    Let $\eta_Y$ be the generic point of $Y$ and $\eta_X$ the generic point of $X$.
    
    Consider a $G$-invariant saturated subsheaf $W\subseteq V_{\mid Y}$.
    Restricting the inclusion to $\eta_Y$ we obtain a $G$-invariant subvector space 
    $W_{\mid \eta_Y}\subseteq (V_{\eta_X})_{\mid \eta_Y}$.
    
    The field extension $\kappa(Y)/\kappa(X)$ is a $G$-torsor and we can apply descent theory.
     We obtain $W'_{\eta_X}\subseteq V_{\eta_X}$ such that $W'_{\eta_X}\otimes_{\kappa(X)}\kappa(Y)=W_{\eta_Y}$ as subspaces of $(V_{\eta_X})_{\mid \eta_Y}$.
     
     By \cite[Proposition 1]{langton}, 
     which also holds for varieties not just smooth projective varieties, 
     there is a unique saturated subsheaf $W'\subseteq V$ inducing the inclusion
     $W'_{\eta_X}\subseteq V_{\eta_X}$. Pulling back along the flat morphism $Y\to X$ 
     we obtain a saturated subsheaf $W'_{\mid Y}\subseteq V_{\mid Y}$
     which agrees with the inclusion $W_{\eta_Y}\subseteq (V_{\eta_X})_{\mid\eta_Y}$ on the generic point.
     By another application of \cite[Proposition 1]{langton}
     we conclude $W'_{\mid Y}=W$.
\end{proof}

    We provide examples which show that neither "saturated" nor "subsheaf of a sheaf which descends"
    can be removed in Lemma \ref{lemma-invariant-subsheaf}.
\begin{example}
\label{example-non-descend}
    Let $E$ be an elliptic curve and $\pi:E\to \mathbb{P}^1$ be a $2:1$ Galois cover ramified at $4$ points.
    Denote the non-trivial element of the Galois group $G=\mathbb{Z}/2$ by $\sigma$.
    
    Consider a line bundle $L$ of degree $1$ on $E$.
    Then $L\oplus \sigma^{\ast}L$ admits a $G$-linearization, but does not descend to $\mathbb{P}^1$.
    Indeed, if there was a vector bundle $V$ on $\mathbb{P}^1$ such that $V_{\mid E}\cong L\oplus \sigma^{\ast}L$,
    then $V$ is semistable of slope $\frac{1}{2}$ by Lemma \ref{lemma-stability-pullback}.
    Grothendieck's classification of vector bundles on $\mathbb{P}^1$ does not allow for such a bundle,
    see e.g. \cite{vbonp1}.
    
    Consider a point $e\in E$ at which $\pi$ is ramified.
    Let $I$ be the effective Cartier divisor which cuts out $e\in E$.
    Then $I$ is a $G$-invariant subsheaf of $\mathcal{O}_E$ but does not descend to a subsheaf $I'$ of $\mathcal{O}_C$.
    Indeed, by Lemma \ref{lemma-stability-pullback} such a subsheaf $I'$ would be a line bundle of slope $\frac{1}{2}$ which
    is impossible.
\end{example}

\subsection{Functoriality and Small Genus}

\begin{definition}
    A finite group $G$ is called {\it prime to} $p$ if
    $p\nmid \#(G)$.
    A finite separable cover (resp. \'etale cover) $\pi:Y\to X$ of varieties is
    {\it prime to} $p$ if the Galois hull of $\kappa(Y)/\kappa(X)$ (resp. of $Y/X$) has Galois group prime to $p$.
\end{definition}

Observe that prime to $p$ morphisms are well-behaved under composition, i.e., the composition of two such morphisms is again prime to $p$.
We now introduce our functorial notions of stability.
\begin{definition}
\label{definition-fun-stab}
	Let $X$ be a projective variety. 
	A sheaf $V$ on $X$ is called 
	{\it separable-stable}, (resp. {\it \'etale-stable}, resp. \emph{prime to $p$ stable}) 
	if for every finite separable, (resp. finite \'etale, resp. finite \'etale prime to $p$) 
	morphism $\pi:Y\to X$ of varieties the pullback
	$\pi^{\ast}V$ is stable with respect to
	$\pi^{\ast}\mathcal{O}_X(1)$.
\end{definition}

\begin{example}
        Every line bundle is separable-stable. 
        If $p>0$, then a semistable vector bundle
        of rank $r=p^n$, $n\geq 1$, and degree coprime to $p$
        is prime to $p$ stable.
\end{example}
A finite separable morphism has two parts, namely an \'etale part and a genuinely ramified part. We recall the definition:
\begin{definition}
	\label{definition-genuinely-ramified}
	Let $f:Y\to X$ be a cover of varieties.
	We say that $f$ is {\it genuinely ramified} if 
	every factorization $Y\to Y'\to X$ of $f$ such that $Y'\to X$ is an \'etale
	cover satisfies that $Y'\to X$ is an isomorphism.
\end{definition}
Biswas, Das, and Parameswaran show in \cite[Theorem 1.2]{bdp}
that genuinely ramified morphisms of normal projective varieties preserve stability under pullback.
As a direct consequence we obtain:

\begin{cor}
	\label{cor-pro-separable-pro-etale}
	On a normal projective variety the notions of \'etale-stability and
	separable-stability agree for vector bundles.
\end{cor}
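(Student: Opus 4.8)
The plan is to establish the two inclusions separately. One direction is immediate and formal: every finite \'etale cover of varieties is in particular a finite separable morphism of varieties, so if $\pi^{\ast}V$ is stable for every separable pullback then it is stable for every \'etale pullback; that is, separable-stable implies \'etale-stable. The content is the converse, and for it I would factor an arbitrary separable pullback through its maximal \'etale subcover and then invoke \cite[Theorem~1.2]{bdp} on the remaining genuinely ramified part.

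Concretely, let $V$ be an \'etale-stable vector bundle on the normal projective variety $X$ and let $\pi:Y\to X$ be any cover. The first step is the \'etale/genuinely-ramified factorization mentioned before Definition~\ref{definition-genuinely-ramified}: let $Y'$ be the maximal \'etale subcover of $\pi$, i.e.\ the normalization of $X$ in the compositum of all intermediate fields $\kappa(X)\subseteq L\subseteq\kappa(Y)$ whose associated cover of $X$ is \'etale; this compositum is a finite extension, and a finite compositum of \'etale covers is \'etale, so we obtain $Y\xrightarrow{g}Y'\xrightarrow{h}X$ with $h$ a finite \'etale cover. I would then check the routine points: $Y'$ is finite over $X$, hence projective and of finite type, and it is \'etale over the normal variety $X$, hence normal and integral, so $Y'$ is a normal projective variety; the extension $\kappa(Y)/\kappa(Y')$ is separable as a subextension of $\kappa(Y)/\kappa(X)$, so $g$ is a cover; and $g$ is genuinely ramified by maximality of $Y'$, since any factorization $Y\to Z\to Y'$ with $Z\to Y'$ \'etale yields an \'etale subcover $Z\to X$ of $\pi$, which by construction is dominated by $Y'$, forcing $Z\cong Y'$.

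With the factorization in hand the conclusion is short. Since $V$ is \'etale-stable and $h$ is a finite \'etale cover of varieties, $h^{\ast}V$ is a stable vector bundle on $Y'$. Since $g:Y\to Y'$ is a genuinely ramified cover of normal projective varieties, \cite[Theorem~1.2]{bdp} applies and gives that $g^{\ast}(h^{\ast}V)=\pi^{\ast}V$ is again stable. As $\pi$ was arbitrary, $V$ is separable-stable, and the two notions coincide on vector bundles.

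I do not expect a genuine obstacle here: the corollary is essentially a repackaging of \cite[Theorem~1.2]{bdp} together with the existence of the maximal \'etale subcover. The only points that need a moment's care are that the intermediate cover $Y'$ is again normal and projective (so the cited theorem applies verbatim) and that pulling a vector bundle back along $h$ and $g$ keeps it a vector bundle, so that no reflexive-hull subtleties of Lemma~\ref{lemma-stability-pullback} intervene and ``stable'' is meant in the naive sense throughout.
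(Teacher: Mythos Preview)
Your argument is correct and is exactly the reasoning the paper has in mind: the corollary is stated immediately after the sentence ``Biswas, Das, and Parameswaran show in \cite[Theorem 1.2]{bdp} that genuinely ramified morphisms of normal projective varieties preserve stability under pullback. As a direct consequence we obtain:'' with no further proof, so you have simply spelled out the \'etale/genuinely-ramified factorization that the paper leaves implicit. One small point worth tightening: Definition~\ref{definition-fun-stab} allows $Y$ to be an arbitrary variety, not necessarily normal, so before invoking \cite[Theorem~1.2]{bdp} you should either remark that one may replace $Y$ by its normalization without loss (the function field, hence the factorization and the stability question, are unchanged), or note that stability of $\pi^{\ast}V$ on a non-normal $Y$ is detected on the normalization anyway.
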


\begin{remark}
\label{remark-finite}
Being able to go back and forth between covers and \'etale covers
yields several advantages. 
On the one hand, it is easier to construct Galois covers than \'etale Galois covers.
On the other hand, descent theory is simpler for \'etale Galois covers
and there are - up to isomorphism - only finitely many \'etale covers of fixed degree.
To be precise we have:
\end{remark}

\begin{lemma}
    \label{lemma-finite}
    Let $X$ be a normal projective variety. Then for fixed degree $d$ there
    are only finitely many \'etale covers $Y\to X$ of degree $d$ (up to isomorphism).
\end{lemma}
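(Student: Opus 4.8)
The key fact is that connected finite étale covers of degree $d$ of a connected scheme $X$ correspond, via the fiber functor at a chosen geometric point $\bar{x}$, to transitive actions of the étale fundamental group $\pi_1^{\et}(X,\bar{x})$ on a set with $d$ elements, i.e. to conjugacy classes of open subgroups of index $d$. So the statement will follow from topological finite generation of $\pi_1^{\et}(X,\bar{x})$: a topologically finitely generated profinite group has only finitely many open subgroups of each given index $d$ (there are finitely many continuous homomorphisms to the finite group $S_d$, since such a homomorphism is determined by the images of a finite generating set). A general (not necessarily connected) étale cover of degree $d$ is a disjoint union of connected étale covers whose degrees sum to $d$, so finiteness of the connected ones of each degree $\leq d$ gives finiteness in general.

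The plan is therefore as follows. First I would reduce to the connected case by the disjoint-union remark above. Next I would invoke that $\pi_1^{\et}(X,\bar{x})$ is topologically finitely generated for $X$ a normal projective variety over an algebraically closed field. In characteristic $0$ this is classical (comparison with the topological fundamental group of a projective variety, which is finitely generated). In positive characteristic this is a theorem; for a normal projective variety one can either cite the finite generation of $\pi_1^{\et}$ following from de Jong's alterations together with the known case of smooth projective varieties, or reduce via a resolution/normalization and a curve-fibration argument. I would simply cite the relevant literature (e.g. SGA1 for the curve/smooth case and the extension to normal projective varieties) rather than reprove it. Finally, given topological finite generation, I would spell out the elementary group-theoretic step: fixing a finite topological generating set $g_1,\dots,g_m$, a continuous homomorphism $\rho:\pi_1^{\et}(X,\bar{x})\to S_d$ is determined by the tuple $(\rho(g_1),\dots,\rho(g_m))\in S_d^m$, so there are at most $(d!)^m$ such homomorphisms; each connected degree-$d$ cover gives such a $\rho$ (the monodromy action on the fiber), and isomorphic covers give conjugate $\rho$'s, so there are finitely many isomorphism classes.

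The main obstacle is purely expository: deciding how much of the topological finite generation of $\pi_1^{\et}$ of a normal projective variety to cite versus prove. The honest route is to cite it. Everything else — the equivalence of categories between finite étale covers and finite continuous $\pi_1^{\et}$-sets, the reduction to the connected case, and the counting of homomorphisms into $S_d$ — is formal. One mild point to be careful about: "up to isomorphism" here means isomorphism of $X$-schemes, which is exactly what matches conjugacy of monodromy representations, so no subtlety arises there.
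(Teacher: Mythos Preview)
Your proposal is correct and follows essentially the same approach as the paper: both reduce to the topological finite generation of $\pi_1^{\et}(X)$ for a normal projective variety (the paper cites \cite[Satz 13.1]{popp}) and then count continuous homomorphisms into the symmetric group $S_d$. Your additional remarks on reducing to the connected case and on isomorphism versus conjugacy are sound but not strictly needed, since the paper already phrases things in terms of arbitrary $\pi_1^{\et}$-sets of cardinality $d$.
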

\begin{proof}
    This is an immediate consequence of the \'etale fundamental group $\pi_{\et}(X)$
    of $X$ being topologically finitely generated.
    To wit, an \'etale cover $Y\to X$ of degree $d$
    corresponds to a finite continuous $\pi_{\'et}(X)$-set of cardinality $d$. 
    Up to isomorphism $S=\{1,\dots,d\}$ and the action of $\pi_{\'et}(X)$
    on $S$ is given by a continuous morphism $\pi_{\'et}(X)\to \mathbf{S}_d$,
    where $\mathbf{S}_d$ denotes the symmetric group of $\{1,\dots,d\}$ equipped with
    the discrete topology.
    As the \'etale fundamental group of a normal projective variety is topologically
    finitely generated, see \cite[Satz 13.1]{popp}, there are only finitely many continuous morphisms to a fixed 
    finite group with the discrete topology.
\end{proof}

The notion of \'etale-stability on a smooth projective curve $C$ is only interesting if $g_C\geq 2$.

\begin{lemma}
	\label{lemma-small-genus}
	Let $C$ be a smooth projective curve of genus $g_C\leq 1$.
	Then the following hold:
	\begin{enumerate}[(i)]
	    \item If $g_C=0$, then the only stable bundles are line bundles.
	    \item If $g_C=1$, then a stable vector bundle of rank $r$ and degree $d$
	            is prime to $p$ stable iff $(r,d)=(1)$ and $r$ is a power of $p$.
	    \item If $g_C=1$ and $C$ is an ordinary elliptic curve, then the only
	        \'etale stable bundles are line bundles.
	    \item If $g_C=1$ and $C$ is supersingular, then the notions of prime to $p$ stable and
	    \'etale stable agree.
	\end{enumerate} 
\end{lemma}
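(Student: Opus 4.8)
The plan is to reduce all four parts to two classical classification results --- Grothendieck's splitting theorem on $\mathbb{P}^1$ and Atiyah's classification of vector bundles on an elliptic curve --- together with the fact that the connected \'etale covers of an elliptic curve are, after choosing a point above the origin, exactly the isogenies. Throughout I use two elementary remarks about a vector bundle $W$ of rank $r'$ and degree $d'$ on a curve: if $W$ is semistable and $\gcd(r',d')=1$ then $W$ is stable (a proper subbundle of the same slope would have rank divisible by $r'$, which is impossible as its rank lies strictly between $0$ and $r'$); and, by Atiyah, on an elliptic curve $W$ is stable iff it is indecomposable with $\gcd(r',d')=1$, so there is no stable bundle whose rank and degree share a factor $>1$. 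Part (i) is then immediate: by Grothendieck every bundle on $\mathbb{P}^1$ is a sum of line bundles, and in rank $\geq 2$ the summand of maximal degree is a sub-line-bundle of slope $\geq\mu(V)$, so $V$ is not stable.

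For (ii)--(iv) I first record the covers involved. A connected \'etale cover $\pi\colon C'\to C$ of an elliptic curve has $g_{C'}=1$ by Riemann--Hurwitz; after choosing on $C'$ a point above the origin, $\pi$ becomes an isogeny, and every separable isogeny is Galois with Galois group its kernel. Hence $\pi$ is prime to $p$ exactly when $\deg\pi$ is prime to $p$, and in particular every prime to $p$ \'etale cover has degree prime to $p$. Conversely, for every prime $\ell\neq p$ there is a cyclic \'etale cover of $C$ of degree $\ell$ (quotient by an order-$\ell$ subgroup of $C[\ell]$), and if $C$ is ordinary there is in addition an \'etale cover of degree $p$ (quotient by the \'etale part of $C[p]$). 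Now for (ii), let $V$ be stable of rank $r$ and degree $d$, so $\gcd(r,d)=1$ by Atiyah. If $r$ is a power of $p$ then for every prime to $p$ \'etale cover $\pi$ the bundle $\pi^{\ast}V$ is semistable by Lemma \ref{lemma-stability-pullback}(iii) and has rank $r$ and degree $(\deg\pi)\,d$ with $p\nmid(\deg\pi)d$, hence coprime rank and degree, hence is stable; so $V$ is prime to $p$ stable. If $r$ is not a power of $p$, picking a prime $\ell\mid r$ with $\ell\neq p$ and pulling $V$ back along a degree-$\ell$ isogeny gives a bundle of rank $r$ and degree $\ell d$, whose rank and degree share the factor $\ell$, so it is not stable and $V$ is not prime to $p$ stable.

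Parts (iii) and (iv) isolate the two values of the $p$-rank. If $C$ is ordinary, the argument of (ii) applies to every prime $\ell\mid r$ --- including $\ell=p$, since a degree-$p$ \'etale cover now exists --- so any stable $V$ of rank $r\geq 2$ is destabilized on some \'etale cover; as line bundles are visibly \'etale stable, the \'etale stable bundles are exactly the line bundles, giving (iii). If $C$ is supersingular it has $p$-rank $0$, so no connected \'etale cover of $C$ --- hence no Galois hull of an \'etale cover --- has degree divisible by $p$; thus every finite \'etale cover of $C$ is prime to $p$, and the two stability conditions agree by definition, giving (iv).

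None of this is deep: the substantive inputs are the classifications of Grothendieck and Atiyah. The points needing a little care are the identification of connected \'etale covers of an elliptic curve with isogenies together with the fact that isogenies are Galois, the determination of exactly which degrees of connected \'etale covers occur (this is where ordinary versus supersingular enters, through the $p$-rank), and bookkeeping the coprimality of rank and degree under pullback.
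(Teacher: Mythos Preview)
Your proof is correct and follows essentially the same route as the paper: Grothendieck's splitting for (i), Atiyah's classification (the paper also cites Oda) to identify stability on an elliptic curve with $\gcd(r,d)=1$, and then the existence or non-existence of \'etale covers of the relevant degrees to settle (ii)--(iv). The only cosmetic differences are that the paper uses multiplication-by-$d$ maps (degree $d^2$) and the dual Frobenius, whereas you use degree-$\ell$ quotients of $C$ by cyclic subgroups of $C[\ell]$, and you are slightly more explicit about why connected \'etale covers of an elliptic curve are Galois isogenies.
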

\begin{proof}
	If $g_C=0$, then (i) follows from Grothendieck's classification 
        of vector bundles on $\mathbb{P}^1$, see e.g. \cite{vbonp1}.
	
	In the following we use that semistability is preserved under pullback 
	by a cover and the behaviour of the degree under pullback,
	see Lemma \ref{lemma-stability-pullback}.

	If $g_C=1$, we use \cite[Theorem 5 and Theorem 7]{atiyah}, which are both
	valid in arbitrary characteristic. These theorems
	immediately imply that there are no stable bundles of rank $r>1$ 
        and integral slope over an elliptic curve. 
	In fact more can be said: 
        a semistable vector bundle of rank $r$ and degree $d$
        is stable iff $(r,d)=(1)$, a direct consequence of \cite[Corollary 2.5]{oda}.
    
	Consider a stable bundle $V$ of rank $r > 1$ and degree $d$ 
        such that $(r,d)=(1)$.
	On an \'etale cover of degree non-coprime to $r$ 
	the pullback of $V$ can not be stable by the previous discussion.
	This proves the claim (iii) for ordinary elliptic curves as 
	they have \'etale covers of any square degree. Indeed, for $d$
	not divisible by $p$ multiplication by $d$ is of degree $d^2$. 
	For $d=p$ the dual of the Frobenius $F^{\lor}:E\to E^{(p)}$ is \'etale of
	degree $p$.
	
	If $r$ is a power of $p$ and $(r,d)=(1)$,
	then on all prime to $p$ covers we still have coprime rank and degree.
 	This proves (ii).
 	
	If $C$ is supersingular, then every \'etale cover is prime to $p$ and we obtain (iv). 
\end{proof}

\section{Proof of Theorem 1}
The idea to prove Theorem 1 is simple: There are two types of failure for a stable 
bundle to remain stable after pullback. 
Both of these failures can be detected on single cover. We make this more precise on a smooth projective curve $C$.

The key observation is that a stable bundle $V$ of rank $r$ on $C$
decomposes on an \'etale 
Galois cover $D\to C$ as $V_{\mid D}\cong \bigoplus_{i=1}^n W_i^{\oplus e}$ 
for some pairwise non-isomorphic stable bundles $W_i$ on
$D$ such that the Galois group acts transitively on the isomorphism classes of the $W_i$, see Lemma \ref{lemma-pullback-galois}.
This is somewhat similar to the decomposition of a prime ideal in a Galois extension of number fields; in particular $e$ does not depend on the index $i$.

If $n\geq 2$, this decomposition behaviour can already be detected 
on an \'etale Galois cover $C_{r-large}$, a cover dominating all
\'etale covers of degree dividing $\rk(V)=r$, see Lemma \ref{lemma-large-cover}.

If $V$ remains stable on $C_{r-large}$, then for any \'etale Galois cover 
$D\to C$ the decomposition is $V_{\mid D}\cong W^{\oplus e}$.
Pretending that $W$ descends to a stable bundle $M$ on $C$ 
(this is not clear at all but we provide a technical workaround, see Lemma \ref{lemma-determinant-descend}) 
we can compare the
descent data associated to $M^{\oplus e}$ and $V$ to obtain 
a $\mathrm{Gl}_e$-representation $\rho$ of the Galois group $\mathrm{Gal}(D/C)=G$.
The descent data agree on the kernel of $\rho$ and 
we are reduced to $G$ being a finite subgroup of $\mathrm{Gl}_e$.
If $G$ is prime to $p$, then Jordan's theorem 
- which also has a positive characteristic version due Brauer and Feit -
has a particularly nice form:
\begin{theorem}[\cite{jor} p.114 for characteristic $0$, \cite{jordanp} for positive characteristic]
    \label{theorem-jordan}
    Let $r$ be a natural number, $r \geq 1$. There exists a constant $J(r)$ such that for every finite prime to $p$ subgroup
    $G\subset \mathrm{Gl}_r$ there exists a normal abelian subgroup $N\subseteq G$ of index $\leq J(r)$.
\end{theorem}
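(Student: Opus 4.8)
The plan is to treat the characteristic $0$ case and the positive characteristic case separately, since the two cases have genuinely different proofs, and in both the statement is a well-known classical result that we merely need to extract from the cited references.

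For characteristic $0$, the statement is exactly Jordan's original theorem: every finite subgroup $G$ of $\mathrm{Gl}_r(\mathbb{C})$ has a normal abelian subgroup of index bounded by a constant $J(r)$ depending only on $r$. First I would reduce to $\mathrm{Gl}_r(\mathbb{C})$: since $k$ is algebraically closed of characteristic $0$, any finite subgroup of $\mathrm{Gl}_r(k)$ is conjugate into $\mathrm{Gl}_r$ of a finitely generated subfield of $k$, which embeds into $\mathbb{C}$; alternatively one simply invokes that a finite subgroup is defined over a number field and base-changes. Then one quotes \cite{jor}. If one wanted a self-contained argument, the standard modern proof runs as follows: conjugate $G$ into the unitary group $U(r)$ by averaging a Hermitian form; use a neighbourhood-of-the-identity argument (elements close to $1$ in $U(r)$ that commute with their conjugates must actually commute, via the commutator estimate $\|[g,h]-1\|\leq 2\|g-1\|\,\|h-1\|$) to show the subgroup $N$ generated by elements of $G$ lying in a fixed small ball is abelian; check $N$ is normal; and bound $[G:N]$ by a volume/covering-number argument on $U(r)$. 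I expect to present only the reduction and the citation, not this whole argument.

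For positive characteristic, the naive statement is false without the prime-to-$p$ hypothesis (e.g. $\mathrm{Gl}_r(\mathbb{F}_{p^n})$), which is exactly why the hypothesis $p\nmid\#(G)$ is imposed. Under that hypothesis the result is due to Brauer and Feit \cite{jordanp}: a finite subgroup $G\subset\mathrm{Gl}_r(k)$ of order prime to $p$ has a normal abelian subgroup of index at most a constant depending only on $r$. The point is that a prime-to-$p$ subgroup, via a choice of an embedding of roots of unity, lifts to characteristic $0$: by Brauer's theory one finds a faithful representation of $G$ of dimension bounded in terms of $r$ (in fact $r$ suffices, using that the Brauer character lifts) over a field of characteristic $0$, and then one applies the characteristic $0$ case already proved. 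So the step is: (1) note $\#G$ prime to $p$, (2) lift the $r$-dimensional representation to characteristic $0$ keeping it faithful, (3) apply Theorem \ref{theorem-jordan} in characteristic $0$, obtaining $N\trianglelefteq G$ abelian of index $\leq J(r)$.

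The main obstacle — to the extent there is one in what is essentially a citation — is step (2) in positive characteristic: making precise that a prime-to-$p$ group has a faithful characteristic-zero lift of the same dimension. This is handled cleanly by the Brauer lifting: choose a complete DVR $(R,\mathfrak m)$ with $R/\mathfrak m=k$ and $\mathrm{Frac}(R)$ of characteristic $0$; since $\#G$ is invertible in $R$, the group algebra $R[G]$ is separable over $R$, so idempotents and representations lift along $R\to k$, and the lifted representation is still faithful because its reduction is. I would state this lift as a lemma-free one-liner and cite \cite{jordanp} (and e.g. Curtis–Reiner) for the details, since reproving Brauer–Feit is well outside the scope of this paper. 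The remainder is then immediate from the characteristic $0$ statement, and one records that $J(r)$ may be taken to be the maximum of the characteristic $0$ bound and the Brauer–Feit bound, giving a single constant valid in all characteristics.
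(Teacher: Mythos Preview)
The paper does not prove this theorem at all: it is stated purely as a cited classical result, with references to Jordan for characteristic $0$ and to Brauer--Feit for positive characteristic, and is then used as a black box in the construction of $X_{r\text{-good}}$. There is therefore no proof in the paper to compare your proposal against.

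Your sketch is nonetheless correct and already goes further than the paper. Two minor remarks. First, the Brauer--Feit theorem you cite is in fact more general than what is needed here: it bounds the index of a normal abelian subgroup in terms of $r$ and the order of a Sylow $p$-subgroup, so the prime-to-$p$ case is the special case where that Sylow subgroup is trivial. Your direct route via Brauer lifting (representations of a group of order prime to $p$ over $k$ lift to characteristic $0$ because $k[G]$ is semisimple and idempotents lift along the Witt vectors) is the cleanest way to obtain exactly the statement used in the paper, and avoids invoking the full strength of Brauer--Feit. Second, since the paper only needs the existence of \emph{some} constant $J(r)$, your closing remark about taking a maximum of two bounds is unnecessary: once you have lifted to characteristic $0$, the same $J(r)$ works uniformly.
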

Thus, there exists a normal abelian subgroup $N\subseteq G$ of index
$\leq J(e)$, where $J(e)$ denotes the constant from Jordan's theorem. 
As a finite abelian subgroup is simultaneously triagonalizable 
the decomposition $V_{\mid D}\cong W^{\oplus e}$ can already be detected on $D/N$. 
We obtain a prime to $p$ \'etale Galois cover $C_{r-good}$
which detects the stability of $V_{\mid D}$ 
as a cover dominating all prime to $p$ covers of degree $\leq rJ(r)$.

We split the construction of $C_{r-good}$ into two parts. First we show the key lemma and construct $C_{r-large}$.
This construction can also be carried out over any normal projective variety.

Then we continue with the workaround for descending $W$ and finally construct $C_{r-good}$.
The same type of cover works over a normal projective variety $X$. 
However, the workaround for descent only works for curves.
Thus, one has to complete the descent setup on the level of 
$X$ and then restrict the setup to a large curve.

\subsection{A large cover}

The key observation for the (non-)functoriality of stability is the following lemma. 
A stable bundle can only decompose in a very special way after
a Galois pullback.
\begin{lemma}[Key observation]
    \label{lemma-pullback-galois}
    Let $\pi:Y\to X$ be a Galois cover of normal projective varieties with
    Galois group $G$. Let $V$ be a stable vector bundle on $X$ of rank $r$.
    Then $V_{\mid Y}\cong (\bigoplus_{i=1}^n W_i)^{\oplus e}$ for some
    pairwise non-isomorphic stable vector bundles $W_i$ on $Y$ and $n,e\geq 1$.
    Furthermore, $G$ acts transitively on the set of isomorphism classes 
    $\{W_i \mid i=1,\dots, n\}$.
 
    In particular, all the $W_i$ have the same rank $\frac{r}{ne}$. 
\end{lemma}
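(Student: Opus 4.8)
The plan is to combine Lemma~\ref{lemma-stability-pullback}(iv) with a descent argument. First note that $V_{\mid Y}=\pi^\ast V$ is again a vector bundle, hence reflexive, so $(V_{\mid Y})^{\lor\lor}=V_{\mid Y}$; since a stable bundle is in particular polystable, Lemma~\ref{lemma-stability-pullback}(iv) shows that $V_{\mid Y}$ is polystable. Write its decomposition into stable summands as $V_{\mid Y}\cong\bigoplus_j W_j^{\oplus m_j}$, with the $W_j$ pairwise non-isomorphic and all of slope $d\mu(V)$, where $d=\deg\pi$; this decomposition is unique up to reordering by Krull--Schmidt. Because $\pi\circ\sigma=\pi$ for every $\sigma\in G$, there is a canonical identification $\sigma^\ast V_{\mid Y}=V_{\mid Y}$, so $V_{\mid Y}$ carries a canonical $G$-linearization. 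Applying $\sigma^\ast$ to the decomposition and using uniqueness, we see that $G$ permutes the finite set of isomorphism classes $\{[W_j]\}$, preserving the multiplicities $m_j$ (so these are constant along $G$-orbits), and since $\sigma^\ast$ preserves ranks the $W_j$ lying in one orbit all have equal rank.

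It remains to prove that this action is transitive. Assume it is not, fix one orbit $O$, and let $V_O\subseteq V_{\mid Y}$ be the sum of the isotypic components corresponding to the classes in $O$. Then $V_O$ is a direct summand of $V_{\mid Y}$, hence a saturated subsheaf, and it is $G$-invariant because the $G$-linearization of $V_{\mid Y}$ respects the canonical isotypic decomposition and permutes its pieces compatibly with the action on $\{[W_j]\}$. The cover $\pi$ is flat over a big open subset $U\subseteq X$, and $U':=\pi^{-1}(U)$ is big in $Y$; applying Lemma~\ref{lemma-invariant-subsheaf} to the flat Galois cover $U'\to U$, the restriction $V_O|_{U'}$ descends to a saturated subsheaf of $V|_U$, which by \cite[Proposition~1]{langton} extends uniquely to a saturated, hence reflexive, subsheaf $\widetilde W\subseteq V$. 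Since degrees and ranks depend only on a big open subset, and $V_O$ has slope $d\mu(V)$ as a direct summand of the polystable $V_{\mid Y}$, Lemma~\ref{lemma-stability-pullback}(ii) gives $\mu(\widetilde W)=\mu(V)$, while $\widetilde W\neq 0$ has rank $\rk(V_O)<\rk(V_{\mid Y})=r$. This contradicts the stability of $V$. Hence $O$ is the whole set, so $G$ acts transitively; transitivity then forces the $W_i$ to have a common rank and the multiplicities to be a single number $e$, giving $V_{\mid Y}\cong(\bigoplus_{i=1}^n W_i)^{\oplus e}$ with $\rk W_i=r/(ne)$ by comparing ranks.

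The step needing the most care is the descent in the second paragraph. On a cover that is not \'etale (or only flat) a $G$-linearization need not be a descent datum, as Example~\ref{example-non-descend} illustrates, so $V_O$ cannot be descended as a direct summand; the remedy is to descend it merely as a \emph{saturated subsheaf} using Lemma~\ref{lemma-invariant-subsheaf}. This forces one to work on the flat locus, and one must keep track throughout that slopes and ranks are unaffected by discarding a closed subset of codimension at least $2$ --- the same device already used in the proof of Lemma~\ref{lemma-stability-pullback}.
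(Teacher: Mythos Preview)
Your proof is correct and follows essentially the same route as the paper: use Lemma~\ref{lemma-stability-pullback}(iv) to get polystability of $V_{\mid Y}$, then descend a $G$-invariant subsheaf via Lemma~\ref{lemma-invariant-subsheaf} (on the flat locus) to contradict stability of $V$ unless the $G$-action on the isomorphism classes is transitive. The only cosmetic differences are that the paper phrases the descent step positively (the $G$-saturation of a single $W_i$ inside $V_{\mid Y}$ descends and must equal $V$) rather than by contradiction on a proper orbit, and that the paper deduces the equality of multiplicities by an explicit inclusion argument whereas you obtain it more cleanly from Krull--Schmidt; your handling of the flat locus is in fact more explicit than the paper's.
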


\begin{proof}
	By Lemma \ref{lemma-stability-pullback} the bundle $V_{\mid Y}$ is
	polystable.
        As $V_{\mid Y}$ is a vector bundle we find that $V_{\mid Y}\cong\bigoplus_{i=1}^n W_i^{\oplus e_i}$ 
	for pairwise non-isomorphic stable vector bundles $W_i$ on $Y$.
	Let $\iota:W\to V_{\mid Y}$ denote the inclusion of one of the $W_i$.
	The image of $\bigoplus_{\sigma\in G}\sigma^{\ast}W
	\xrightarrow{\oplus\sigma^{\ast}\iota}V_{\mid Y}$ is a $G$-invariant subbundle and
	descends to a subbundle $E$ of $V$ by Lemma \ref{lemma-invariant-subsheaf}. 
    As $E$ has the same slope as $V$, the stability of $V$ implies $E=V$. We obtain
    that $\bigoplus_{\sigma\in G}\sigma^{\ast}W\to V_{\mid Y}$ is surjective. 
    Using the stability of the $W_i$ we find that the group $G$ acts
    transitively on the isomorphism classes of the $W_i$. Clearly,
    $\rk(\sigma^{\ast}W)=\rk(W)$ for all $\sigma\in G$.
    
    Let $e=e_{i_0}$ be the smallest index among the $e_i$ and $W=W_{i_0}$.
    For each $W_i$ there is a $\sigma_i\in G$ such that 
    $\sigma_i^{\ast}W\cong W_i$.
    The inclusion $W_i^{\oplus e_i}\to V_{\mid Y}$ induces an inclusion
    $W^{\oplus e_i}\to V_{\mid Y}$ after pullback by $\sigma_i^{-1}$.
    We obtain $e_i\leq e$. By definition of $e$ we have equality.
    The computation of the rank of $W_i$ is now immediate.    
\end{proof}

There are two fundamentally different ways for a stable bundle to decompose on a Galois cover:
$n=1$ or $n\geq 2$ in Lemma \ref{lemma-pullback-galois}. We first find a cover that checks for $n\geq 2$
using that this decomposition can already be seen on a cover of degree $n$.
\begin{lemma}
	\label{lemma-decomposition-small-degree} 
	Let $\pi:Y\to X$ be a Galois cover of normal projective
	varieties with Galois group $G$.
	Further, let $V$ be a stable vector bundle of rank $r$ on $X$ such that 
	the decomposition $V_{\mid Y}\cong \bigoplus_{i=1}^n W_i^{\oplus e}$ 
	of Lemma \ref{lemma-pullback-galois} satisfies $n\geq 2$.  
	Then there is a factorization of $Y\to X$ into $Y\to
	Y'\xrightarrow{\pi'}X$ 
	such that $\text{deg}(\pi')=n$ and $V_{\mid Y'}$
	is not stable.
	
	More precisely, $V_{\mid Y'}\cong V'\oplus W'$, where $W'$ is of rank $r/n$
	and $V'_{\mid Y}$ is isomorphic to a direct sum of
	conjugates of $W'_{\mid Y}$ under $G$.  
\end{lemma}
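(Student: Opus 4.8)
The plan is to use the transitive $G$-action from Lemma \ref{lemma-pullback-galois} to produce the intermediate cover as the fixed field of the stabilizer of one summand. First I would fix one of the stable summands, say $W=W_1$, and consider the subgroup
\[
    H=\{\sigma\in G \mid \sigma^{\ast}W\cong W_1\oplus\cdots\text{ (i.e. $\sigma$ fixes the iso class of }W_1)\}\subseteq G,
\]
more precisely $H=\{\sigma\in G\mid \sigma^{\ast}W_1\cong W_1\}$. Since $G$ acts transitively on the $n$ isomorphism classes $\{[W_i]\}$, the index $[G:H]=n$. Let $\pi':Y'\to X$ be the subcover corresponding to $H$, i.e. $Y'=Y/H$, so that $\deg(\pi')=[G:H]=n$ and $Y\to Y'$ is Galois with group $H$.

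Next I would identify the summands of $V_{\mid Y'}$. The pullback $V_{\mid Y'}$ is polystable of slope $\mu(V_{\mid Y'})$ by Lemma \ref{lemma-stability-pullback}, and $(V_{\mid Y'})_{\mid Y}\cong V_{\mid Y}\cong\bigoplus_{i=1}^n W_i^{\oplus e}$. The idea is to let $W'\subseteq V_{\mid Y'}$ be the isotypic piece on $Y'$ that pulls back to $W_1^{\oplus e}$ on $Y$: concretely, consider the image on $Y'$ of the sub-polystable-sheaf of $V_{\mid Y}$ generated by the $H$-orbit of the inclusion $W_1\hookrightarrow V_{\mid Y}$. Because $H$ stabilizes $[W_1]$, this image is an $H$-invariant subbundle of $V_{\mid Y}$ all of whose conjugates are (iso to) $W_1$, hence by Lemma \ref{lemma-invariant-subsheaf} (applied on the flat, big open locus of $Y\to Y'$ as in the proof of Lemma \ref{lemma-stability-pullback}) it descends to a subbundle $W'\subseteq V_{\mid Y'}$, with $W'_{\mid Y}\cong W_1^{\oplus e}$ so $\rk(W')=e\cdot\rk(W_1)=r/n$. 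Polystability of $V_{\mid Y'}$ then furnishes a complement $V'$ with $V_{\mid Y'}\cong V'\oplus W'$, and $V'_{\mid Y}\cong\bigoplus_{i=2}^n W_i^{\oplus e}$ is a sum of $G$-conjugates of $W_1=W'_{\mid Y}$ (up to the identifications $W_i\cong\sigma_i^{\ast}W_1$). Since $n\geq 2$ we have $W'\neq 0$ and $W'\neq V_{\mid Y'}$, so $V_{\mid Y'}$ is not stable, as required.

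The main obstacle I expect is the descent step producing $W'$ as an honest subbundle of $V_{\mid Y'}$ rather than merely an $H$-invariant subbundle upstairs. The subtlety is exactly the one flagged in Example \ref{example-non-descend}: an $H$-linearization need not be a descent datum for non-\'etale Galois covers, and here $Y\to Y'$ need not be \'etale. The remedy is to invoke Lemma \ref{lemma-invariant-subsheaf}, which only requires the subsheaf to be \emph{saturated} and $H$-invariant inside a sheaf that descends ($V_{\mid Y'}$ itself), and to carry this out after restricting to the big open locus where $Y\to Y'$ is flat — the slope and the decomposition data are unchanged by such a restriction since they only depend on the isomorphism class on a big open subset. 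A secondary point to check is that the isotypic piece is genuinely $H$-invariant: this follows because $H$ by definition permutes only those copies of $W_1$ among themselves, sending the $W_1^{\oplus e}$-isotypic part of $V_{\mid Y}$ to itself. Once $W'$ is in hand the splitting $V_{\mid Y'}\cong V'\oplus W'$ and the rank count are immediate from polystability and the already-established rank formula $\rk(W_i)=r/(ne)$.
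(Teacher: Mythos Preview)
Your approach is the paper's: let $H$ be the stabilizer of the isomorphism class $[W_1]$, set $Y'=Y/H$ so that $\deg(\pi')=[G:H]=n$, descend the $H$-invariant isotypic piece $W_1^{\oplus e}\subseteq V_{\mid Y}$ to a summand $W'\subseteq V_{\mid Y'}$ via Lemma~\ref{lemma-invariant-subsheaf}, and split off a complement $V'$ by polystability. The paper phrases the descent step by starting with $W:=W_i^{\oplus e}$ and taking the image of $\bigoplus_{\sigma\in H}\sigma^{\ast}W\to V_{\mid Y}$, which is automatically all of the isotypic piece; otherwise the argument is identical.

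One genuine slip to fix: your ``concrete'' construction takes the $H$-orbit of a \emph{single} inclusion $W_1\hookrightarrow V_{\mid Y}$, and you then assert that the resulting $W'$ satisfies $W'_{\mid Y}\cong W_1^{\oplus e}$. That does not follow. The image of $\bigoplus_{\sigma\in H}\sigma^{\ast}W_1\to V_{\mid Y}$ is an $H$-invariant subbundle isomorphic to $W_1^{\oplus m}$ for some $1\le m\le e$, and $m<e$ can occur (for instance when $\lvert G\rvert=n$, so $H$ is trivial, while $e\ge 2$). This would give $\rk(W')=m\cdot r/(ne)$ rather than $r/n$, contradicting the ``more precisely'' clause. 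The fix is exactly what your final paragraph already says: descend the full isotypic component $W_1^{\oplus e}$, which is $H$-invariant because $H$ fixes $[W_1]$; equivalently, run your orbit construction starting from $W_1^{\oplus e}$ rather than from a single copy of $W_1$, as the paper does.
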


 \begin{proof} 
	By assumption there are at least
	two different $W_i$.
	Consider the stabilizer $H$ of $W:=W_i^{\oplus e}$ for some $i$
	and fix an inclusion $\iota:W\to V_{\mid Y}$.
	The image $E$ of $\bigoplus_{\sigma\in H}\sigma^{\ast}W
	\xrightarrow{\oplus \sigma^{\ast}\iota} V_{\mid Y}$ is
	an $H$-invariant subsheaf. Using the stability of the $W_j$ we find that $E$ 
	is isomorphic to $W$. Therefore, the direct summand 
	$W$ of $V_{\mid Y}$
	descends to a direct summand $W'$ of $V_{\mid Y'}$, where $Y'=Y/H$ and
	$Y\to Y'\xrightarrow{\pi'} X$ are the induced
	morphisms.  Note that $\pi'$ has degree $\#(G/H)=n$.

	Let $V_{\mid Y'}\cong W'\oplus V'$.
	As $G$ acts transitively on the isomorphism classes of the $W_i$ 
	we have that $V'_{\mid Y}$ is a direct sum of
	$\sigma^{\ast}W'_{\mid Y}$ for some $\sigma\in G$.  
\end{proof}

As a direct consequence we obtain the large cover checking for 
decomposition of a stable bundle into at least two non-isomorphic stable bundles on some cover:
\begin{lemma}
	\label{lemma-large-cover}
	Let $X$ be a normal projective variety and $r\geq 2$.  
	Then we have the following:
	\begin{enumerate}[(i)]
		\item There exists an \'etale Galois cover
			$X_{r-large}\to X$ satisfying the following: 
   
			If $V$ is a vector bundle of rank $r$ on $X$ such that
			$V_{\mid X_{r-large}}$ is stable, then 
			for all \'etale Galois covers $Y\to X$
			we have $V_{\mid Y}\cong W^{\oplus e}$ 
                for some stable vector bundle $W$ on $Y$
			and $e\geq 1$.  
		\item There is an \'etale prime to $p$ Galois cover
			$X'_{r-large}\to X$
			such that:
   
			If $V$ is a vector bundle of rank $r$ on $X$ such that
			$V_{\mid X'_{r-large}}$ is stable,
			then for all \'etale prime to $p$ Galois covers $Y\to X$
			we have $V_{\mid Y}\cong W^{\oplus e}$ for some 
                stable vector bundle $W$ on $Y$
			and $e\geq 1$.
	\end{enumerate}
\end{lemma}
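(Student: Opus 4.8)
The plan is to construct $X_{r-large}$ (and its prime to $p$ analogue) as a connected \'etale Galois cover that \emph{dominates every} \'etale cover of $X$ of degree $\leq r$, and then to exploit Lemma~\ref{lemma-decomposition-small-degree}, which says that a decomposition with $n\geq 2$ as in Lemma~\ref{lemma-pullback-galois} already becomes visible on an intermediate cover of degree $n\leq r$. For the construction I would work with $\pi_{\et}(X)$: since it is topologically finitely generated it has only finitely many open subgroups of index $\leq r$ (equivalently, by Lemma~\ref{lemma-finite}, $X$ has only finitely many \'etale covers of degree $\leq r$ up to isomorphism; an open subgroup of index $n$ is a point stabiliser of a continuous action on $\{1,\dots,n\}$, and there are finitely many such actions). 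Let $N\subseteq\pi_{\et}(X)$ be the intersection of all open subgroups of index $\leq r$. This is a finite intersection, so $N$ is open of finite index, and it is normal because the set of open subgroups of a given index is stable under conjugation. Take $X_{r-large}\to X$ to be the connected \'etale Galois cover attached to $N$; by construction every \'etale cover of $X$ of degree $\leq r$ is dominated by $X_{r-large}$.

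Now let $V$ have rank $r$ with $V_{\mid X_{r-large}}$ stable. Since $X_{r-large}\to X$ is \'etale, $V$ is already stable on $X$ by Lemma~\ref{lemma-stability-pullback}(v), so Lemma~\ref{lemma-pullback-galois} applies. Fix an \'etale Galois cover $Y\to X$ and write $V_{\mid Y}\cong\bigoplus_{i=1}^n W_i^{\oplus e}$; as the $W_i$ have rank $r/(ne)$ we get $n\leq ne\leq r$. Suppose for contradiction that $n\geq 2$. By Lemma~\ref{lemma-decomposition-small-degree} there is a factorisation $Y\to Y'\xrightarrow{\pi'}X$ with $\deg(\pi')=n\leq r$ and $V_{\mid Y'}\cong W'\oplus V'$ where $\rk(W')=r/n$, hence $\rk(V')=r-r/n>0$, so both summands are nonzero. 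The map $Y'\to X$ is again \'etale: $Y'=Y/H$ for a subgroup $H$ of the (freely acting) Galois group of $Y\to X$, equivalently $Y'$ corresponds to the $\pi_{\et}(X)$-set of cosets, so $Y'\to X$ is one of the \'etale covers of degree $\leq r$ and is therefore dominated by $X_{r-large}$; let $f\colon X_{r-large}\to Y'$ be a morphism over $X$. Pulling back the splitting along the finite surjection $f$ gives $V_{\mid X_{r-large}}\cong f^{\ast}W'\oplus f^{\ast}V'$, a direct sum of two nonzero vector bundles. But such a bundle is never stable: one of the two summands is a nonzero proper saturated subsheaf of slope $\geq\mu(V_{\mid X_{r-large}})$. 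This contradicts the hypothesis, so $n=1$ and $V_{\mid Y}\cong W^{\oplus e}$ with $W$ stable, which is (i).

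For (ii) I would repeat the construction using only the open subgroups $U$ of index $\leq r$ whose associated cover is prime to $p$; their intersection $N'$ is again a finite, conjugation-stable, hence normal, open subgroup. The resulting cover $X'_{r-large}\to X$ is prime to $p$: writing $N'=\bigcap_i U_i$ and letting $M_i$ be the core of $U_i$ (so $\pi_{\et}(X)/M_i$ is prime to $p$, by definition of a prime to $p$ cover), the group $\pi_{\et}(X)/\bigcap_i M_i$ embeds into $\prod_i\pi_{\et}(X)/M_i$ and is thus prime to $p$, and $\pi_{\et}(X)/N'$ is a quotient of it. If $Y\to X$ is \'etale prime to $p$ Galois and $n\geq 2$, then the intermediate cover $Y'\to X$ of Lemma~\ref{lemma-decomposition-small-degree} is still prime to $p$, since its Galois hull is dominated by that of $Y\to X$ and the latter has prime to $p$ Galois group; hence $Y'$ is dominated by $X'_{r-large}$ and the same pullback argument produces the contradiction. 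The main thing to be careful about is this bookkeeping of the prime to $p$ condition under passing to subcovers and to a common dominating cover; by contrast the geometric core---descending the splitting to a degree $\leq r$ cover and pulling it back up to a visible direct sum on $X_{r-large}$---is short and characteristic-free.
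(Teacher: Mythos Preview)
Your proof is correct and follows essentially the same route as the paper: build a single \'etale Galois cover dominating the finitely many \'etale covers of bounded degree (the paper uses ``degree dividing $r$'' rather than your ``degree $\leq r$'', but either works since $n\mid r$), then invoke Lemma~\ref{lemma-decomposition-small-degree} to pull an $n\geq 2$ splitting down to such a cover and back up to $X_{r-large}$. Your write-up is in fact more explicit than the paper's in spelling out the $\pi_{\et}(X)$ construction, the fact that $Y'=Y/H$ is \'etale, and the prime to $p$ bookkeeping for part~(ii).
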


\begin{proof}
	(i): Decomposing into different stable vector bundles 
        descends to some \'etale cover of
	degree $n$ such that $n\mid r$, 
        see Lemma \ref{lemma-decomposition-small-degree}.
	There are only finitely many such
	\'etale covers up to isomorphism, see Lemma \ref{lemma-finite}.
        In particular, there is an \'etale Galois cover
	$X_{r-large}$ dominating all
	\'etale covers of degree dividing $r$. 
	This is the desired cover.

	(ii): Define $X'_{r-large}$ as an \'etale prime to $p$ Galois cover
	dominating all \'etale prime to $p$ covers
	of degree dividing $r$. This is the desired cover.
\end{proof}

\subsection{A good cover}
To construct the cover $X_{r-good}$ detecting prime to $p$ stability it remains to deal with decomposition behaviour of the
form $V_{\mid Y}=W^{\oplus e}$, where $Y\to X$ is a Galois cover of normal projective varieties and $V$ a stable bundle.
We start with the workaround for descent of $G$-invariant stable bundles. This requires working on curves and the mild assumption that 
$\det(W)$ already descends. The determinant-descent can be set up on arbitrary varieties and we are then able to derive the main theorem
by reducing to the case of curves via a restriction theorem for stability.

We start with the workaround for descent.
If one is only interested in the case of curves, 
then there is an honest descent lemma one could use instead, 
see Lemma \ref{lemma-jochen}. The workaround roughly says that a $G$-linearization of the determinant of an $G$-invariant simple bundle can lifted to a linearization
for a slightly bigger Galois cover.

\begin{lemma}[Workaround for descent]
	\label{lemma-determinant-descend}
	Let $D\to C$ be a Galois cover of smooth projective curves with
	Galois group $G$.
	Let $V$ be a simple $G$-invariant vector bundle of rank $r$ on $D$.
	Further, assume that $\det(V)$ admits a $G$-linearization.
	
	Then there exists a lift of the $G$-linearization of $\det(V)$ 
	to a system of isomorphisms $\psi_{\sigma}:V\xrightarrow{\sim} \sigma^{\ast}V$.
	Furthermore, there exists a cyclic Galois cover $\varphi:D'\to D$
	such that
	\begin{enumerate}[(i)]
	    \item $\varphi$ is prime to $p$ of degree $\deg(\varphi)\mid r$,
	    \item $D'\to D\to C$ is a Galois cover,
	    \item $\mathrm{Gal}(D'/D)\subseteq \mathrm{Gal}(D'/C)$ is central, and
	    \item there exists a $1$-cocycle $\alpha: \mathrm{Gal}(D'/C)\to \mu_{r}$ such that 
	         \[
	            \varphi^{\ast}\big(\psi_{\sigma}\big)\cdot \alpha(\sigma')^{-1}:
	            V_{\mid D'} \xrightarrow{\sim} \sigma'^{\ast}V_{\mid D'}
	        \]
	          defines a $\mathrm{Gal}(D'/C)$-linearization of $V_{\mid D'}$, 
	          where $\sigma$ denotes the image of $\sigma'$ under the natural morphism $\mathrm{Gal}(D'/C)\to G$.
	\end{enumerate}
\end{lemma}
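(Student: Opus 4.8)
The plan is to produce the required cover $D'$ by trivializing the obstruction to lifting the $G$-linearization of $\det(V)$ to a genuine $G$-linearization of $V$, which lives in $H^2(G,\mu_r)$ after a preliminary choice of isomorphisms.

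\medskip

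First I would choose, for each $\sigma\in G$, an isomorphism $\psi_\sigma:V\xrightarrow{\sim}\sigma^\ast V$; this is possible by $G$-invariance and, since $V$ is simple, $\psi_\sigma$ is unique up to a scalar in $k^\times$. The failure of the cocycle condition is measured by scalars $c(\sigma,\tau)\in k^\times$ defined by $\tau^\ast\psi_\sigma\circ\psi_\tau=c(\sigma,\tau)\,\psi_{\sigma\tau}$, and a standard computation shows $c$ is a $2$-cocycle, i.e. a class in $H^2(G,k^\times)$. Taking determinants, $\det(\psi_\sigma):\det(V)\xrightarrow{\sim}\sigma^\ast\det(V)$ satisfies the analogous relation with scalar $c(\sigma,\tau)^r$; since $\det(V)$ is assumed to admit a $G$-linearization, after rescaling the $\psi_\sigma$ by $r$-th roots of the discrepancy between $\det(\psi_\sigma)$ and the given linearization of $\det(V)$ (here I use that $k$ is algebraically closed), I may assume $\det(\psi_\sigma)$ \emph{is} the given linearization, hence $c(\sigma,\tau)^r=1$ for all $\sigma,\tau$. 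Thus the obstruction class lies in $H^2(G,\mu_r)$, and rescaling each $\psi_\sigma$ by an element of $k^\times$ changes $c$ by a coboundary valued in $k^\times$ but now — since we want to preserve $\det(\psi_\sigma)$ — only by coboundaries valued in $\mu_r$; this is exactly the freedom recorded in the first assertion of the lemma (existence of a lift of the linearization of $\det(V)$ to a system $\psi_\sigma$), while the cocycle class $[c]\in H^2(G,\mu_r)$ is well-defined.

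\medskip

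Next I would kill this class by passing to a central extension. The class $[c]\in H^2(G,\mu_r)$ classifies a central extension
\[
    1\to \mu_r\to \widetilde{G}\to G\to 1,
\]
and on $\widetilde{G}$ the pulled-back class is trivial, so the $\psi_\sigma$ assemble (after the canonical adjustment by the tautological $1$-cochain on $\widetilde G$) into a genuine $\widetilde{G}$-linearization of the pullback of $V$. To realize $\widetilde G$ geometrically as the Galois group of a cover $D'\to C$, I would take $D'\to D$ to be an étale cyclic cover of degree dividing $r$ whose Galois group accounts for the $\mu_r$; here is where working on a \emph{curve} is essential, since étale cyclic covers of degree $m$ of $D$ correspond to $m$-torsion line bundles and there are plenty of them (the Jacobian is $m$-divisible), so one can arrange $\mathrm{Gal}(D'/D)$ to surject onto whatever cyclic quotient of $\mu_r$ is needed and to sit as a central subgroup of $\mathrm{Gal}(D'/C)$. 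Since $\deg(\varphi)\mid r$ and we only ever need roots of unity of order dividing $r$, the prime-to-$p$ hypothesis guarantees $\mu_r\subset k^\times$ genuinely has $r$ elements and the cover $\varphi$ is separable, i.e. étale; this gives (i), (ii), (iii). Finally, unwinding the construction, the scalars needed to correct $\varphi^\ast\psi_\sigma$ into a $\mathrm{Gal}(D'/C)$-linearization are precisely a $1$-cochain $\alpha$ with $\delta\alpha=\varphi^\ast c$, and since $\varphi^\ast c$ becomes a coboundary by construction such an $\alpha$ exists with values in $\mu_r$ and is a $1$-cocycle on the kernel; this is (iv).

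\medskip

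The main obstacle I expect is the geometric realization step: turning the abstract central extension $\widetilde G$ of $G$ by (a quotient of) $\mu_r$ into an actual tower $D'\to D\to C$ of curves with the right Galois group and with $\mathrm{Gal}(D'/D)$ \emph{central} in $\mathrm{Gal}(D'/C)$. One cannot simply invoke an arbitrary solution to the embedding problem for $\pi_1^{\et}(C)$; instead one wants a \emph{cyclic} étale cover of $D$, so the real content is checking that the class $[c]$, or rather its image under a chosen surjection $\mu_r\twoheadrightarrow\mathbb{Z}/m$, can be represented by a central extension pulled back from a cyclic cover — equivalently that $D'$ may be taken to be a torsor under a single torsion line bundle on $D$ that is defined over $C$ in the appropriate sense. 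Controlling the degree bound $\deg(\varphi)\mid r$ simultaneously with centrality is the delicate point; I would handle it by observing that the order of $[c]$ in $H^2(G,\mu_r)$ divides $|\mu_r|=r$ (in fact it divides $\exp(G)$, but $r$ suffices), so a cyclic subextension of the appropriate degree already does the job, and that the relevant torsion line bundle on $D$ can be chosen $G$-invariant and hence descends up to the central ambiguity we are introducing anyway. The compatibility with $\det(V)$ throughout — ensuring that all rescalings stay within $\mu_r$ rather than $k^\times$ — is bookkeeping but must be tracked carefully, as it is what makes the output a $\mu_r$-valued cocycle $\alpha$ rather than something coarser.
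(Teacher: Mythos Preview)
Your overall strategy---lift the determinant linearization to isomorphisms $\psi_\sigma$, obtain an obstruction class in $H^2(G,\mu_r)$, and kill it by passing to a central extension realized geometrically---matches the paper's. But two genuine gaps remain.

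First, the geometric realization step is where the argument actually lives, and your proposed mechanism (abundance of torsion line bundles on $D$, divisibility of the Jacobian) does not solve it. Knowing that $D$ has many cyclic \'etale covers does not tell you that any of them makes $D'\to C$ Galois with the prescribed group, nor that $\mathrm{Gal}(D'/D)$ sits centrally. What is needed is a solution to the embedding problem: given the surjection $\mathrm{Gal}_{\kappa(C)}\twoheadrightarrow G$ and the central extension $1\to\mu_{r'}\to G'\to G\to 1$ classified by the obstruction, one must lift to $\mathrm{Gal}_{\kappa(C)}\twoheadrightarrow G'$ (or a suitable subgroup). The paper's key input here is Tsen's theorem: $\kappa(C)$ is a $C_1$-field, so $\mathrm{Br}(\kappa(C))=0$, and combined with Hilbert~90 this yields $H^2(\mathrm{Gal}_{\kappa(C)},\mu_{r'})=0$. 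Hence the pulled-back extension of $\mathrm{Gal}_{\kappa(C)}$ splits, producing a surjection $\mathrm{Gal}_{\kappa(C)}\twoheadrightarrow H\subseteq G'$ with $H\to G$ surjective and $\ker(H\to G)\subseteq\mu_{r'}$ cyclic and central. The curve $D'$ is then the normalization of $C$ in the corresponding finite extension of $\kappa(C)$; nothing forces $D'\to D$ to be \'etale, and the lemma does not claim it is.

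Second, you invoke a ``prime-to-$p$ hypothesis'' that is not present: neither $G$ nor $r$ is assumed prime to $p$, and if $p\mid r$ then $\mu_r\subset k^\times$ has only $r'=r/p^{v_p(r)}$ elements. The paper handles this by first raising the obstruction to the $p^{v_p(r)}$-th power to land in $H^2(G,\mu_{r'})$, running the Tsen argument there, and only at the very end recovering $\mu_r$-values by taking the (unique, since we are in characteristic $p$) $p^{v_p(r)}$-th roots of the resulting $1$-cochain. This is also why conclusion~(i) asserts $\varphi$ is prime to $p$ with $\deg(\varphi)\mid r$: the construction only ever produces a subgroup of $\mu_{r'}$ as $\mathrm{Gal}(D'/D)$.
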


\begin{proof}
	For two simple isomorphic bundles $V$ and $W$ we have a surjective morphism
	$\Hom(V,W)\xrightarrow{\det}\Hom(\det(V),\det(W))$.
	Thus, the $G$-linearization
	of $\det(V)$ lifts to isomorphisms $\psi_{\sigma}:V\xrightarrow{\sim}\sigma^{\ast}V$
	such that $\psi_{\sigma\tau}^{-1}\circ\tau^{\ast}\psi_{\sigma}\circ \psi_{\tau}=\lambda_{\sigma,\tau}\in \mu_r$.
	Indeed,	after identifying $\Hom(V,V)$
	with $k$ the determinant corresponds to the $r$-th power map.
	
	A computation \cite[Proposition 2.8]{dk}
	shows that the family $\lambda_{\sigma,\tau}$ defines a $2$-cocycle.
	Let $p^n r'=r$ with $r'$ coprime to $p$ and $\lambda'_{\sigma,\tau}=\lambda_{\sigma,\tau}^{p^n}$.
	The $2$-cocycle condition for $\lambda_{\sigma,\tau}$ implies the
	$2$-cocycle condition for $\lambda'_{\sigma,\tau}$.
	We obtain
	$\lambda':=(\lambda'_{\sigma,\tau})\in H^2(G,\mu_{r'}).$

	Let $\mathrm{Gal}$ be the absolute Galois group of $\kappa(C)$.
	As $C$ is a curve over an algebraically closed field, 
        $\kappa(C)$ is a $C_1$ field
	by Tsen's Theorem, see \cite[Corollary 6.5.5]{nsw}.
	In particular, $H^2(\mathrm{Gal},(\kappa(C)^{sep})^*)$ vanishes, 
        see \cite[Proposition 6.5.8]{nsw}.
	By Hilbert 90 we also have 
        vanishing of $H^1(\mathrm{Gal},(\kappa(C)^{sep})^*)$, 
        see \cite[Theorem 6.2.1]{nsw}.
	Applying these two vanishing results to the long exact
	cohomology sequence
	of the short exact sequence
	\[
	    0 \to \mu_{r'}\to (\kappa(C)^{sep})^*
	    \xrightarrow{x\mapsto x^{r'}}(\kappa(C)^{sep})^* \to 0
	\]
	we obtain $H^2(\mathrm{Gal}, \mu_{r'})=0$.

	By \cite[Theorem 1.2.4]{nsw} the element 
        $\lambda'\in H^2(G,\mu_{r'})$ corresponds to an	extension 
	\[
	    0\to \mu_{r'}\to G'\to G \to 0
	\]
	inducing the action of $G$ on $\mu_{r'}$. As the action of $G$ on
	$\mu_{r'}$ is trivial, we find that $\mu_{r'}$ is central in $G'$.
	Write $G$ as a quotient of $\mathrm{Gal}$. 
	Since $H^2(\mathrm{Gal},\mu_{r'})=0$, we obtain that the central extension 
	\[
	    0\to \mu_{r'}\to \mathrm{Gal}\times_G G'\to \mathrm{Gal} \to 0,
	\]
	is trivial, i.e., $\mathrm{Gal}\times_G G'\cong\mathrm{Gal}\times \mu_{r'}$.
	In particular, there exists a surjection $\mathrm{Gal}\times  \mu_{r'}\to G'$.
	Let $H$ denote the image of $\mathrm{Gal}\times 0$ under this morphism.
	By construction $H\to G'\to G$ is surjective. As $H\subseteq G'$
        we find that
        \[
        0\to \mu_{r'}\to H\times_G G'\to H \to 0
        \]
        is a central split extension
        and thus trivial.
        
	The kernel $K$ of $H\twoheadrightarrow G$ is a subgroup of $\mu_{r'}$.
	In particular, $K\subseteq H$ is central and cyclic.
	Denote by $\kappa(D')$ the field extension of $\kappa(C)$ corresponding
	to $\mathrm{Gal}\twoheadrightarrow H$ and by $D'$ the associated curve.
	We obtain Galois covers
	$D'\xrightarrow{\varphi} D\to C$ such that 
	$\mathrm{Gal}(D'/D)\subseteq \mathrm{Gal}(D'/C)$ is central
	and cyclic. Furthermore, the obstruction $\lambda'\in H^2(G,\mu_{r'})$
	vanishes in $H^2(H,\mu_{r'})$. 
	
	The triviality of the $2$-cocycle $\varphi^{\ast}\lambda'\in
	H^2(H,\mu_{r'})$
	means that there is a 1-cocycle $\alpha':H\to \mu_{r'}$ such that 
	$\partial(\alpha')(\sigma,\tau)=\lambda'_{f(\sigma),f(\tau)}$,
	where $f:H\twoheadrightarrow G$ denotes the surjection constructed above.  
	
	Recall that in positive characteristic $p$-th roots are unique.
	Thus, there is a 1-cocycle $\alpha:H\to \mu_{r},\sigma\mapsto
	\alpha'(\sigma)^{1/p^n}$ such that $\partial(\alpha)(\sigma,\tau)=\lambda_{f(\sigma),f(\tau)}$.
	By construction the isomorphisms $\varphi^{\ast}\psi_{f(\sigma)}\cdot
	\alpha(\sigma)^{-1}, \sigma\in H,$ define a linearization.
	Indeed, we have
	\begin{align*}
	\big(\varphi^{\ast}\psi_{f(\sigma\tau)}\cdot \alpha(\sigma\tau)^{-1}\big)^{-1}\circ	
        \tau^{\ast}\varphi^{\ast}\psi_{f(\sigma)}\cdot \alpha(\sigma)^{-1}\circ
	\varphi^{\ast}\psi_{f(\tau)}\cdot \alpha(\tau)^{-1}
		 & =\\
		\lambda_{f(\sigma),f(\tau)}\cdot(\partial(\alpha)(\sigma,\tau))^{-1} &  =1.
	\end{align*} 
\end{proof}

\begin{remark}
	A shorter (but less precise) argument is the following:
	Recall that $H^2(\mathrm{Gal},\mu_{r'})=\colim H^2(G',\mu_{r'})$, 
        see \cite[Proposition 1.2.5]{nsw}, where
	the colimit is taken over all finite Galois extensions of $\kappa(C)$ and
	$G'$ denotes the Galois group. We obtain $\mathrm{Gal}\twoheadrightarrow
	G'\twoheadrightarrow G$ such that
	the obstruction $\lambda$ vanishes on the associated curve. However,
	this does not give us a way to control the kernel which is crucial.
\end{remark}

Note that Lemma \ref{lemma-determinant-descend} only works for curves 
and requires the mild assumption that the determinant descends.
Given that the determinant descends we can detect decomposition on a cover of degree bounded by the constant of
Jordan's theorem. We do this in the following lemma. This would already allow us to deduce Theorem 1 for curves
but we only give the general proof later.
\begin{lemma}
    \label{lemma-invariant-inclusion}
    Let $D\to C$ be an \'etale prime to $p$ Galois cover with Galois group $G$.
    Let $V$ be a vector bundle on $C$ such that $V_{\mid D}\cong W^{\oplus e}$ for some
    simple $G$-invariant vector bundle $W$ satisfying that $\det(W)$ descends to $C$.
    Denote the constant from Jordan's theorem, 
    see Theorem \ref{theorem-jordan}, by $J(e)$.
    
    Then there exists a normal subgroup $N\subseteq G$ of index $\leq J(e)$ 
    and $W'\subseteq V_{\mid C'}$ such that $W'_{\mid D}\cong W$, where
    $D\rightarrow C':=D/N \rightarrow C$ are the natural morphisms.
\end{lemma}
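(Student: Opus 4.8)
The plan is to turn the decomposition $V_{\mid D}\cong W^{\oplus e}$ into a $\gl_e$-representation of $G$ and then invoke Jordan's theorem. First I would apply Lemma \ref{lemma-determinant-descend} to the simple $G$-invariant bundle $W$: since $\det(W)$ descends to $C$, it carries a $G$-linearization, and the lemma produces a lift to a system of isomorphisms $\psi_\sigma: W\xrightarrow{\sim}\sigma^\ast W$, together with a prime to $p$ cyclic cover $\varphi:D'\to D$ and a $1$-cocycle $\alpha$ so that on $D'$ the bundle $W_{\mid D'}$ genuinely admits a $\mathrm{Gal}(D'/C)$-linearization. Replacing $D$ by $D'$ and $G$ by $\mathrm{Gal}(D'/C)$ is harmless for the conclusion (the cover $D'\to D$ is prime to $p$ of degree dividing $r$, hence $D'\to C$ is still étale prime to $p$, and a subgroup of index $\leq J(e)$ in the bigger group maps to one of index $\leq J(e)$ below — actually cleaner: run the argument with $D'$ and pull back). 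So I may assume $W$ itself admits a genuine $G$-linearization $\{\psi_\sigma\}$, hence descends to a bundle $M$ on $C$ with $M_{\mid D}\cong W$, and $V_{\mid D}\cong M_{\mid D}^{\oplus e}$.

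Next, the bundle $V$ also descends (it is pulled back from $C$), so on $D$ both $V_{\mid D}$ and $M_{\mid D}^{\oplus e}$ carry canonical $G$-linearizations (the descent data). Pick an isomorphism $\Phi: V_{\mid D}\xrightarrow{\sim} M_{\mid D}^{\oplus e}$. Comparing the two linearizations through $\Phi$ gives, for each $\sigma\in G$, an automorphism of $M_{\mid D}^{\oplus e}$; since $M_{\mid D}=W$ is simple, $\Hom(W,W)=k$, so $\mathrm{Aut}(W^{\oplus e})=\gl_e(k)$, and we obtain a map $\rho: G\to \gl_e(k)$. The cocycle compatibility of the two linearizations forces $\rho$ to be a group homomorphism. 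Since $\#G$ is prime to $p$, $\rho(G)$ is a finite prime to $p$ subgroup of $\gl_e$, so Theorem \ref{theorem-jordan} supplies a normal abelian subgroup $\bar N\trianglelefteq \rho(G)$ of index $\leq J(e)$; let $N=\rho^{-1}(\bar N)$, a normal subgroup of $G$ of index $\leq J(e)$.

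It remains to exhibit $W'\subseteq V_{\mid C'}$ with $W'_{\mid D}\cong W$, where $C'=D/N$. The point is that on $C'$ the data is governed by $\rho(N)=\bar N$, which is a finite abelian subgroup of $\gl_e(k)$ and hence simultaneously triangularizable (in fact diagonalizable, $p\nmid\#\bar N$); pick a common eigenvector line $L\subset k^e$. This $L$ corresponds to an $N$-invariant subbundle $W\otimes L\cong W\subseteq V_{\mid D}$ which is stable under the twisted descent data for $C'$, hence descends to a subsheaf $W'\subseteq V_{\mid C'}$ by Lemma \ref{lemma-invariant-subsheaf} (the inclusion of a direct summand is automatically saturated); by construction $W'_{\mid D}\cong W$. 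The main obstacle is the bookkeeping in the first two steps: making precise that after passing to $D'$ the comparison of linearizations is exactly a homomorphism $G\to\gl_e$ and not merely a projective representation — this is where the workaround of Lemma \ref{lemma-determinant-descend} is essential, since without an honest linearization of $W$ one only gets a $\mathrm{PGl}_e$-valued map and Jordan's theorem does not directly apply. Once that is set up, the descent of the eigenline is routine.
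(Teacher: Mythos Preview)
Your overall strategy matches the paper's, but there is a genuine gap in the first step. The cover $\varphi:D'\to D$ produced by Lemma \ref{lemma-determinant-descend} is only a cyclic Galois cover, \emph{not} an \'etale cover; nothing in that lemma forces $D'\to C$ to be \'etale. Consequently your sentence ``hence descends to a bundle $M$ on $C$'' is unjustified: a $G'$-linearization along a ramified Galois cover does not give descent in general (this is exactly the phenomenon in Example \ref{example-non-descend}). Since your construction of $\rho$ is built on comparing the descent data of $V$ and of the nonexistent $M^{\oplus e}$, the argument as written does not go through.

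The paper avoids this by never descending $W$. It defines $\rho:G'\to\gl_e$ directly as the comparison between the pulled-back descent datum $\varphi^\ast\psi^V_\sigma$ of $V$ and the $G'$-linearization $(\varphi^\ast\psi^W_\sigma)^{\oplus e}\cdot\mathrm{diag}(\alpha)$ of $W_{\mid D'}^{\oplus e}$ furnished by Lemma \ref{lemma-determinant-descend}; a direct computation shows this is a homomorphism. Your eigenline argument then applies verbatim to $\rho(G')$. There is a second bookkeeping point you gloss over: the normal abelian subgroup $N'\subseteq G'$ from Jordan need not contain $H=\mathrm{Gal}(D'/D)$, so $D'/N'$ is not a quotient of $D$ and you cannot simply ``pull back''. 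The paper fixes this by replacing $N'$ with $N'+H$, which is still normal abelian because $H$ is central in $G'$; equivalently, one checks that $\rho(H)$ consists of scalar matrices, so any eigenline for $N'$ is automatically $(N'+H)$-invariant. Only then does Lemma \ref{lemma-invariant-subsheaf} (for the flat, possibly ramified cover $D'\to D/N$) yield the desired $W'\subseteq V_{\mid C'}$.
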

\begin{proof}
    Denote the rank of $W$ by $r$.
    Let $\psi^W_{\sigma}:W\xrightarrow{\sim} \sigma^{\ast}W, \sigma\in G$, be a system of isomorphisms lifting the
    descent datum of $\det(W)$, see Lemma \ref{lemma-determinant-descend}.
    By the same lemma there is a Galois cover $D'\xrightarrow{\varphi} D$ with prime to $p$ 
    cyclic Galois group $H$ such that $D'\to D\to C$ is a Galois cover with Galois group $G'$.
    Further, there exists a $1$-cocycle $\alpha:G'\to \mu_r$ 
    such that $\varphi^{\ast}(\psi^W_{\sigma})\cdot\alpha(\sigma')^{-1}$
    is a $G'
    $-linearization, where $\sigma$ denotes the image of $\sigma'$ in $G$.
    Furthermore, $H\subseteq G'$ is central.
    
    Our goal is to find a normal subgroup $N'\subseteq G'$ of index $\leq J(e)$ containing $H$ 
    and an $N'$-invariant subbundle $W_{\mid D'}\subseteq V_{\mid D'}$.
    By Lemma \ref{lemma-invariant-subsheaf} the inclusion $W_{\mid D'}\subseteq V_{\mid D'}$ 
    descends to $C'$, where $C'$ denotes the normal closure of $C$ in 
    the fixed field
    $\kappa(D')^{N'}$. Then the lemma follows as $C'=C/N$, where $N$ is the image of $N'$ in $G$.
    
    Let $\psi^V_{\sigma}:V_{\mid D}\xrightarrow{\sim} \sigma^{\ast}V_{\mid D}$ be the descent datum associated to $V$.
    Choose an isomorphism $\psi:V_{\mid D}\xrightarrow{\sim} W^{\oplus e}$ which exists by assumption.
    Define a map 
	\[
	    \rho:G'\to \mathrm{Gl}_{e}, \sigma'\mapsto
	\text{diag}(\alpha(\sigma'))
	((\psi_{\sigma}^{W})^{-1})^{\oplus e}\circ
	\sigma^{\ast}(\psi)\circ\psi^{V}_{\sigma}\circ\psi^{-1},
	\]
	where $\sigma$ denotes the image of $\sigma'$ in $G$, i.e., $\rho$ measures the failure
	of the following diagram
	\[
	    \begin{tikzcd}
	        W^{\oplus e}  & &
	        V_{\mid D} \ar[d, "\psi^{V}_{\sigma}"]\ar[ll, "\psi"']\\
	        \sigma^{\ast}W^{\oplus e} \ar[u, "((\psi^{W}_{\sigma})^{-1})^{\oplus e}"]
	        &\ & \sigma^{\ast}V_{\mid D} \ar[ll, " \sigma^{\ast}(\psi)"]
	    \end{tikzcd}
	\]
	to commute twisted by $\mathrm{diag}(\alpha(\sigma'))$.
	Another way to put this is that $\rho$ compares the $G'$-linearizations
	$(\varphi^{\ast}(\psi^{W}_{\sigma})^{-1})^{\oplus e}\mathrm{diag}(\alpha(\sigma'))$ and $\varphi^{\ast}(\psi^{V}_{\sigma})$
	on $D'$.
	
	We claim that $\rho$ defines a group morphism.
	Indeed, for $\sigma',\tau'\in G'$ mapping to $\sigma$ (resp. $\tau$)
	in $G$ we have
	\begin{align*}
		\rho(\tau')\rho(\sigma') & = \ \\
	\text{diag}(\alpha(\tau'))
		((\psi_{\tau}^{W})^{-1})^{\oplus e}
	\tau^{\ast}(\psi)\psi^{V}_{\tau}\psi^{-1}
	\text{diag}(\alpha(\sigma'))
		((\psi_{\sigma}^{W})^{-1})^{\oplus e}
		\sigma^{\ast}(\psi)\psi^{V}_{\sigma}\psi^{-1} & = \ \\
		\text{diag}(\alpha(\tau')\alpha(\sigma'))
		((\psi_{\tau}^{W})^{-1})^{\oplus e}
	\tau^{\ast}(\psi)\psi^{V}_{\tau}\psi^{-1} 
		((\psi_{\sigma}^{W})^{-1})^{\oplus e}
		\sigma^{\ast}(\psi)\psi^{V}_{\sigma}\psi^{-1} & =\ \\
		\text{diag}(\alpha(\tau')\alpha(\sigma'))
		((\psi_{\sigma}^{W})^{-1})^{\oplus e}
		\sigma^{\ast}{\bigg (}((\psi_{\tau}^{W})^{-1})^{\oplus e}
		\tau^{\ast}(\psi)\psi^{V}_{\tau}\psi^{-1}{\bigg )} 
		\sigma^{\ast}(\psi)\psi^{V}_{\sigma}\psi^{-1} & = \ \\
		\text{diag}(\alpha(\tau')\alpha(\sigma'))
		((\psi_{\sigma}^{W})^{-1})^{\oplus e}
		\sigma^{\ast}((\psi_{\tau}^{W})^{-1})^{\oplus e}
		\sigma^{\ast}\tau^{\ast}(\psi)\sigma^{\ast}(\psi^{V}_{\tau})
		\psi^{V}_{\sigma}\psi^{-1} & = \ \\
			\text{diag}(\alpha(\tau'\sigma'))
		((\psi_{\tau\sigma}^{W})^{-1})^{\oplus e}
		(\tau\sigma)^{\ast}(\psi)\psi^{V}_{\tau\sigma}
		\psi^{-1}  & = \ \\
		\rho(\tau'\sigma'), & \ 
	\end{align*}
	where only the third and fifth equality require an explanation. 
	To obtain the third equality we use that $((\psi_{\sigma}^{W})^{-1})^{\oplus e}$ commutes with matrices 
	and that matrices with entries in $k$ do not change under pullback.
	To obtain the fifth equality we note that by construction of $\alpha$
        the isomorphisms
        $\varphi^{\ast}(\psi^{W}_{\sigma})\alpha(\sigma')^{-1}$ define a $G'$-linearization, 
        see Lemma \ref{lemma-determinant-descend}.
	
	Replacing $D'$ by $D'/\ker(\rho)$ we can assume that $G'$ is a subgroup of $\mathrm{Gl}_{e}$.
	By Jordan's theorem, see Theorem \ref{theorem-jordan}, there is a normal abelian subgroup $N'\subseteq G'$
	such that $G'/N'$ has cardinality at most $J(e)$. As $H$ is central in $G'$ the subgroup $N'+H$ is normal, abelian, and 
	contains $H$. As a finite
	abelian subgroup of $\mathrm{Gl}_{e}$ is simultaneously triagonalizable,
	we find the desired	
	$(N'+H)$-invariant inclusion $W_{\mid D'}\subseteq V_{\mid D'}$. 
\end{proof}

    To be able to apply the previous lemma
    we need to find a way to descend the determinant bundle. 
    For such a construction we need to take roots of line bundles. 
    If we avoid the characteristic, 
    then this is always possible up to a cyclic cover.
\begin{lemma}
\label{lemma-root-preparation}
	Let $X$ be a normal projective variety. Let $d$ be an integer prime to $p$.
	Further, let $L$ be a line bundle on $X$.
	Then there exists a cyclic Galois cover $\varphi:X'\to X$ such that
	$\deg(\varphi)\mid d$ and
	$L_{\mid X'}$ admits a $d$-th root on a big open subscheme.
\end{lemma}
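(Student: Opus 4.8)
The plan is to reduce, using the freedom to twist $L$ by $d$-th powers, to the case of a very ample line bundle, and then to produce $\varphi$ by pulling a general pencil back along the cyclic degree-$d$ self-map of $\mathbb{P}^1$. First I would observe that $L\otimes\mathcal{O}_X(nd)$ is very ample for $n\gg 0$ and differs from $L$ by the $d$-th power $\mathcal{O}_X(n)^{\otimes d}$. Hence, once a cyclic Galois cover $\varphi:X'\to X$ and a big open $U'\subseteq X'$ are found on which $\varphi^{\ast}(L\otimes\mathcal{O}_X(nd))$ admits a $d$-th root $N$, the line bundle $N\otimes\varphi^{\ast}\mathcal{O}_X(-n)$ is a $d$-th root of $\varphi^{\ast}L$ on $U'$; so we may assume $L$ is very ample.

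Now pick a general pencil $\langle s_0,s_1\rangle\subseteq H^0(X,L)$. Its base locus $Z=\{s_0=s_1=0\}$ has codimension $\geq 2$ in $X$ --- a general codimension-$2$ linear section if $\dim X\geq 2$, and empty if $\dim X=1$ --- so $U:=X\setminus Z$ is a big open subscheme, and the pencil defines a morphism $f:U\to\mathbb{P}^1$ with $f^{\ast}\mathcal{O}_{\mathbb{P}^1}(1)\cong L_{\mid U}$. Since $p\nmid d$, the coordinatewise $d$-th power map $\psi:\mathbb{P}^1\to\mathbb{P}^1$, $[x:y]\mapsto[x^d:y^d]$, is a cyclic Galois cover with group $\mu_d\cong\mathbb{Z}/d$ satisfying $\psi^{\ast}\mathcal{O}_{\mathbb{P}^1}(1)=\mathcal{O}_{\mathbb{P}^1}(d)$. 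Let $W_0$ be an irreducible component of the fibre product of $f$ and $\psi$ over $\mathbb{P}^1$, with its induced finite morphisms $q_0:W_0\to U$ and $h_0:W_0\to\mathbb{P}^1$. Its function field is a Kummer extension of $\kappa(X)=\kappa(U)$ generated by a $d$-th root of $s_0/s_1$, hence is cyclic of some degree $d'\mid d$; and $q_0^{\ast}(L_{\mid U})=q_0^{\ast}f^{\ast}\mathcal{O}(1)=h_0^{\ast}\psi^{\ast}\mathcal{O}(1)=(h_0^{\ast}\mathcal{O}(1))^{\otimes d}$. Take $X'$ to be the normalization of $X$ in $\kappa(W_0)$: this is a normal projective variety, $\varphi:X'\to X$ is a cyclic Galois cover of degree $d'\mid d$, and the open subset $U':=\varphi^{-1}(U)$ is canonically the normalization of $W_0$, so it inherits a morphism $h':U'\to\mathbb{P}^1$ lifting $h_0$ with $(\varphi^{\ast}L)_{\mid U'}=((h')^{\ast}\mathcal{O}(1))^{\otimes d}$. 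Finally $X'\setminus U'=\varphi^{-1}(Z)$ has codimension $\geq 2$ in $X'$ since $\varphi$ is finite, so $U'$ is big in $X'$ and $(h')^{\ast}\mathcal{O}_{\mathbb{P}^1}(1)$ is the desired $d$-th root of $L_{\mid X'}$.

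The steps I would treat as routine are: that a general pencil of a very ample linear system has base locus of codimension $\geq 2$ and is base-point-free on the complement (a dimension count, valid in all characteristics); that the normalization of a projective variety in a finite subextension of a given field is again a projective variety, compatibly with restriction to the open $U$; and that finite surjective morphisms preserve the dimension of closed subsets. The one genuinely delicate point --- and the reason the construction is shaped this way --- is obtaining a \emph{cyclic} cover: pulling back along the self-map $\psi$ of $\mathbb{P}^1$ is exactly what simultaneously forces a cyclic Galois group and realizes $\mathcal{O}_{\mathbb{P}^1}(1)$ as a $d$-th power. A finite map $X\to\mathbb{P}^{\dim X}$ given by general sections of $L$, composed with the coordinatewise $d$-th power map of $\mathbb{P}^{\dim X}$, would also make $L$ a $d$-th power upstairs, but only yields the non-cyclic Galois group $(\mathbb{Z}/d)^{\dim X}$; and on a curve a general pencil is already base-point-free, so the single construction above handles all dimensions uniformly.
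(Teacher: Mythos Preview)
Your proof is correct. Both your argument and the paper's reduce to constructing a cyclic Kummer extension of $\kappa(X)$ by adjoining a $d$-th root of a rational function and then taking $X'$ to be the normalization of $X$ in that extension; the difference is in how the rational function is produced and how the big open is identified. The paper twists $L$ only until it has a nonzero section, writes $L=\mathcal{O}_X(D)$ for an effective Cartier divisor, picks an affine open $U$ containing the generic point of $D$ on which $D=V(f)$, and adjoins $f^{1/d}$; the big open is then $\varphi^{-1}(U)\cup\varphi^{-1}(X\setminus D)$ and the $d$-th root is the Cartier divisor cut out by $t=f^{1/d}$. You instead twist to very ample, choose a general pencil $\langle s_0,s_1\rangle$, and realize the Kummer cover as the pullback of the cyclic self-map $[x:y]\mapsto[x^d:y^d]$ of $\mathbb{P}^1$; your rational function is $s_0/s_1$, your big open is simply the complement of the pencil's base locus, and your $d$-th root is the explicit line bundle $(h')^{\ast}\mathcal{O}_{\mathbb{P}^1}(1)$. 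Your packaging is more geometric and makes both the big open and the root more transparent; the paper's is more direct and avoids any appeal to generic pencils or Bertini-type statements. At the level of function fields the two constructions are interchangeable.
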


\begin{proof}
	Let $\mathcal{O}_X(1)$ be an ample line bundle. Clearly, it suffices to find a morphism $X'\to X$ 
	as in the statement such that $L_{\mid X'}\otimes \mathcal{O}_{X'}(1)^{\otimes Nd}$ has a $d$-th root for some $N$.
	Thus, we can assume that $L$ admits a non-zero global section, i.e.,
	$L=\mathcal{O}_X(D)$ for some effective Cartier divisor $D$. Observe
	that it suffices to prove the Lemma for $\mathcal{O}_X(-D)$ instead of $L$.
	
	Choose an affine open $U$ containing the generic point of $D$ in $X$ such
	that $D_{\mid U}=V(f)$ for some non-zero divisor $f\in \mathcal{O}_U$.
	Consider the field extension $K/\kappa(X)$ generated by a $d$-th root of $f$.
	As $p\nmid d$ the extension $K/\kappa(X)$ is cyclic of order $d'\mid d$.
	Let $X'$ denote the normalization of $X$ in $K$. Note that
	there is a canonical finite morphism $\varphi:X'\to X$ of normal projective varieties.
	It is also separable by construction. As we only want to find an a $d$-th root on a big open
        and $X'\to X$ is flat at all codimension $1$ points, we can assume that $X'\to X$ is flat.
	
	Consider $U':=\varphi^{-1}(U)\cup \varphi^{-1}(X\setminus D)$. 
        By construction $U'$ is big.
	We show that 
	$\mathcal{O}_{X}(-D)_{\mid X'}$ admits a $d$-th root on $U'$.
	Let $t$ be a $d$-th root of $f$ on $\varphi^{-1}(U)$.
	Then $t$ defines an effective Cartier divisor $D'$ on $U'$. 
	We have $\mathcal{O}_{U'}(-D')^{\otimes d}=\mathcal{O}_{X}(-D)_{\mid U'}$ as $t^d=f$ on $\varphi^{-1}(U)$ by construction
	and both are trivial on $\varphi^{-1}(X\setminus D)$.
\end{proof}

\begin{definition}
    A morphism $\pi:Y\to X$ of varieties is called {\it quasi-\'etale}
    if there is some big open subset $U\subseteq X$ such that $\pi^{-1}(U)\to U$ is \'etale.
    If $\pi^{-1}(U)\to U$ is an \'etale Galois cover with Galois group $G$, 
    then we also say that $\pi:Y\to X$
    is a \emph{quasi-\'etale Galois cover} with Galois group $G$.
\end{definition}

We can now set up the determinant descent needed to apply
Lemma \ref{lemma-invariant-inclusion} on a normal projective variety.

\begin{lemma}
	\label{lemma-descend-preparation-higher-dim}
	Let $X$ be a normal projective variety.
	Let $Y\to X$ be an \'etale prime to $p$ Galois cover with Galois group $G$.
	Further, let $V$ be a stable vector bundle of rank $r$ on $X$
	such that $V$ is stable on $X'_{r-large}$.
 	Then there exists a commutative diagram
	of normal projective varieties
	\[
	\begin{tikzcd}
		Y' \ar[r] \ar[d] & X' \ar[d]\\
		Y \ar[r] & X
	\end{tikzcd} 
	\ \ \ \text{such that}
	\]
	\begin{enumerate}[(i)]
		\item we have $V_{\mid Y'}\cong W'^{\oplus e'}$ such that $W'$ is stable and
	          $\det(W')$ descends along $Y'\to X'$ on some big open subscheme of $Y'$,
		\item $Y'\to X$ is a prime to $p$ Galois cover,
		\item $X'\to X$ is cyclic of degree dividing $r$, and
	    \item[(iv)] $Y'\to X'$ is a quasi-\'etale Galois cover.
	\end{enumerate}
\end{lemma}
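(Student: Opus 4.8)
The plan is to build the diagram in three stages, peeling off one requirement at a time. First I would apply Lemma \ref{lemma-large-cover}(ii): since $V$ is stable on $X'_{r-large}$, for the given \'etale prime to $p$ Galois cover $Y\to X$ we get $V_{\mid Y}\cong W^{\oplus e}$ for some stable vector bundle $W$ on $Y$ and $e\geq 1$. (Here I would first enlarge $Y$, if necessary, to dominate $X'_{r-large}$; replacing $Y$ by a larger \'etale prime to $p$ Galois cover over $X$ is harmless and keeps $V_{\mid Y}$ of the form $W^{\oplus e}$ with $W$ still stable, hence simple as a stable sheaf.) The rank of $W$ is $r/e$, which divides $r$.

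Second, I would make $\det(W)$ descend by a root construction. The subtlety is that $\det(W)$ need not be $G$-invariant as a bundle on $Y$ on the nose; but its class is controlled. Concretely, $\det(V)_{\mid Y}\cong \det(W)^{\otimes e}$, and $\det(V)$ descends (it is a line bundle pulled back from $X$). Set $L:=\det(V)$, a line bundle on $X$. By Lemma \ref{lemma-root-preparation} applied with $d$ a suitable divisor of $r$ prime to $p$ — I expect $d=e$ works, since $e\mid r$ and $e$ is prime to $p$ because $[Y:X]$ is — there is a cyclic Galois cover $X'\to X$ of degree dividing $e$ (hence dividing $r$) such that $L_{\mid X'}$ admits an $e$-th root $M$ on a big open subscheme of $X'$. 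Then on $Y':=Y\times_X X'$ (normalized) we have $\det(W_{\mid Y'})^{\otimes e}\cong (M_{\mid Y'})^{\otimes e}$ on a big open, so $\det(W_{\mid Y'})$ and $M_{\mid Y'}$ differ by an $e$-torsion line bundle; absorbing this torsion (again up to a further cyclic prime to $p$ cover, by the same lemma, or by noting the torsion is killed after pulling back along a cyclic $e$-cover) we arrange $\det(W_{\mid Y'})\cong M_{\mid Y'}$ on a big open subscheme, with $M$ pulled back from $X'$. This is precisely "$\det(W')$ descends along $Y'\to X'$ on some big open subscheme," where $W':=W_{\mid Y'}$ and $e'=e$; $W'$ is still stable by Lemma \ref{lemma-stability-pullback}(iii)--(v), giving (i).

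Third, I would check the group-theoretic and ramification bookkeeping. The cover $X'\to X$ is cyclic prime to $p$ of degree dividing $r$ by construction, giving (iii). The morphism $Y'\to X$ is the composite $Y'\to Y\to X$ up to the fibre product identifications; since $Y\to X$ is \'etale prime to $p$ Galois and $X'\to X$ is cyclic prime to $p$ Galois, the compositum $Y'\to X$ is Galois with group a subgroup of $G\times \mathrm{Gal}(X'/X)$, hence prime to $p$, giving (ii). For (iv): $Y\to X$ is \'etale, so $Y'=Y\times_X X'\to X'$ is the base change of an \'etale Galois cover, hence \'etale over the big open locus of $X'$ where the normalization $Y'\to Y\times_X X'$ is an isomorphism — i.e., over a big open subset of $X'$ the map $Y'\to X'$ is an \'etale Galois cover with group $G$ (or the relevant quotient). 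Thus $Y'\to X'$ is a quasi-\'etale Galois cover, giving (iv). The commutativity of the square is automatic from the fibre product.

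The main obstacle I anticipate is the second stage: arranging $\det(W)$ to genuinely descend rather than merely descend up to torsion. The point is that Lemma \ref{lemma-root-preparation} produces an $e$-th root of $\det(V)$, not of $\det(W)$ directly, and one must argue that after the cyclic cover the discrepancy between $\det(W')$ and the descended root $M$ is an $e$-torsion line bundle which can itself be trivialized (on a big open) by a further cyclic prime to $p$ cover of degree dividing $r$, without destroying properties (ii)--(iv). This requires a small diagram chase with the Kummer sequence on the big \'etale (or fppf) site, keeping track that all the cyclic covers introduced have degree dividing $r$ and prime to $p$, and composing them into a single $X'\to X$; one also uses that $\mu_e\subseteq k^\times$ since $p\nmid e$, so "cyclic of degree dividing $e$" covers suffice to split $e$-torsion. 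Everything else is formal fibre-product and Galois-theory bookkeeping.
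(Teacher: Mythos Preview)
Your overall strategy—take an $e$-th root of $\det(V)$ via Lemma~\ref{lemma-root-preparation}, form a fibre product, kill the leftover torsion line bundle by a further cover, then do the Galois bookkeeping—is the paper's strategy. But two steps are unjustified. First, you assert that $e$ is prime to $p$ ``because $[Y:X]$ is''; there is no such implication, since $e$ is merely the multiplicity in $V_{\mid Y}\cong W^{\oplus e}$ and nothing a priori ties it to $\#G$. The paper sidesteps this by observing that \emph{both} $\det(W)^{\otimes e}\cong\det(V)_{\mid Y}$ and the norm $\bigotimes_{\sigma\in G}\sigma^{\ast}\det(W)\cong\det(W)^{\otimes\#G}$ descend to $X$, hence so does $\det(W)^{\otimes d}$ for $d=\gcd(e,\#G)$; as $\#G$ is prime to $p$ so is $d$, and Lemma~\ref{lemma-root-preparation} then applies with this $d$ in place of your $e$. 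The root and torsion-killing steps go through unchanged with $d$.

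Second, you set $W':=W_{\mid Y'}$ and invoke Lemma~\ref{lemma-stability-pullback}(iii)--(v) for its stability. None of those parts gives this: (iii) is about semistability, (iv) gives only polystability, and (v) is the wrong direction. Pullback of a stable bundle along a prime to $p$ Galois cover is in general only polystable, so $W_{\mid Y'}$ may split further and then (i) fails. The paper repairs this by iteration: if $W_{\mid Y'}$ is not stable, replace $Y$ by the \'etale part of $Y'\to X$, so that now $V_{\mid Y}\cong W''^{\oplus e''}$ with $W''$ stable and $e''>e$, and rerun the whole construction; since $e''\leq r$ this terminates. Without this loop your $W'$ need not be stable.
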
 

\begin{proof}
	Consider the decomposition $V_{\mid Y}\cong W^{\oplus e}$ of Lemma
	\ref{lemma-pullback-galois}. Clearly, $\det(W)^{\otimes e}$ and
	$\bigotimes_{\sigma \in G}\sigma^{\ast}\det(W)\cong\det(W)^{\otimes
	\#(G)}$ descend to $X$. Therefore, $\text{det}(W)^{\otimes d}$ descends to $X$ as well,
	where $d=\mathrm{gcd}(e,\#(G))$. Thus, there exists a line bundle $L$
	on $X$ such that $L_{\mid Y}\cong\text{det}(W)^{\otimes d}$.  
	Note that $p\nmid d$ since $G$ is prime to $p$. 
	
	We can apply Lemma \ref{lemma-root-preparation} to find $X'\to X$
    such that $L_{\mid X'}$ has a $d$-th root $L'$ on a big open $U'$ of $X'$.
    Consider a connected component $Y''$ of the normalization of the reduced fibre product $(Y\times_X X')_{red}$.
    Note that the natural morphism $\psi:Y''\to X'$ is prime to $p$ and Galois.
    Then 
    \[
        W'':=\det(W)_{\mid \psi^{-1}(U')}
        \otimes L'^{-1}_{\mid \psi^{-1}(U')}
    \]
    is a line bundle of order dividing $d$. The spectral cover 
    $U'''\to \psi^{-1}(U')$ associated to $W''$ trivializes $W''$.
    
    Let $Y'$ denote the normalization of $Y''$ in $K$, where $K$ is the Galois hull of $\kappa(U''')/\kappa(X)$.
    As $\kappa(U''')/\kappa(Y''), \kappa(Y'')/\kappa(X'),$ and $\kappa(X')/\kappa(X)$ are prime to $p$
    the same holds for $\kappa(Y')/\kappa(X)$.
    Then the commutative diagram
    \[
	\begin{tikzcd}
		Y' \ar[r] \ar[d] & X' \ar[d]\\
		Y \ar[r] & X
	\end{tikzcd}
	\]
    satisfies the conditions (ii), (iii), and (iv) of the Lemma.
    
    If $W_{\mid Y'}$ is stable, then $V_{\mid Y'}\cong W_{\mid Y'}^{\oplus e}$ and we obtain (i) by construction.
    If $W_{\mid Y'}$ is not stable, then
    we repeat the above construction
    replacing $Y$ by the \'etale part of $Y'/X$.
    Then we have $V_{\mid Y}\cong W'^{\oplus e'}$ for $e'> e$ and $W'$ stable.
    As the integer $e'$ is at most $r$,
    this process stops after finitely many iterations.
\end{proof}

We can now prove the main theorem. 
\begin{theorem}
	\label{theorem-very-large-cover}
	Let $X$ be a normal projective variety of dimension at least $1$.
	Let $r\geq 2$.
	Then there exists
	an \'etale prime to $p$ Galois cover $X_{r-good}\to X$
	such that a vector bundle $V$ of rank $r$ on $X$ is
    prime to $p$ stable iff $V_{\mid X_{r-good}}$ is stable.
	
	In particular,
	prime to $p$ stability is an open property
	in the moduli space of Gieseker semistable sheaves on $X$.
\end{theorem}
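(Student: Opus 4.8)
The plan is as follows. Write $J:=\max_{1\le s\le r}J(s)$ for the Jordan constants of Theorem~\ref{theorem-jordan}. I would take $X_{r-good}\to X$ to be a connected prime to $p$ \'etale Galois cover dominating $X'_{r-large}$ (Lemma~\ref{lemma-large-cover}(ii)) together with every prime to $p$ \'etale cover of $X$ of degree at most $rJ$; such a cover exists because, by Lemma~\ref{lemma-finite}, there are only finitely many \'etale covers of bounded degree, so the open normal subgroups of the maximal prime to $p$ quotient of $\pi_{\et}(X)$ that they determine have open normal intersection with prime to $p$ quotient. The implication ``$V$ prime to $p$ stable $\Rightarrow V_{\mid X_{r-good}}$ stable'' is immediate from Definition~\ref{definition-fun-stab}.

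For the converse, assume $V_{\mid X_{r-good}}$ is stable. By Lemma~\ref{lemma-stability-pullback}(v) $V$ is stable, and since $X_{r-good}$ dominates $X'_{r-large}$ so is $V_{\mid X'_{r-large}}$; a further application of (v) reduces the problem to proving that $V_{\mid Y}$ is stable for every prime to $p$ \'etale \emph{Galois} cover $\pi\colon Y\to X$ (an arbitrary such cover being dominated by its Galois hull, which is again prime to $p$ \'etale). Fix $Y$, $G=\mathrm{Gal}(Y/X)$. By Lemma~\ref{lemma-pullback-galois} together with Lemma~\ref{lemma-large-cover}(ii), $V_{\mid Y}\cong W^{\oplus e}$ with $W$ a stable — hence simple, and, since $\sigma^{\ast}W$ is again a stable summand of $V_{\mid Y}$, automatically $G$-invariant — vector bundle, and it remains to show $e=1$.

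Suppose $e\ge2$; the goal is to produce a prime to $p$ \'etale cover $Z\to X$ of degree at most $rJ$ with $V_{\mid Z}$ not stable, which, as $X_{r-good}$ dominates $Z$, contradicts the stability of $V_{\mid X_{r-good}}$ via Lemma~\ref{lemma-stability-pullback}(v). First I would feed $V$ into Lemma~\ref{lemma-descend-preparation-higher-dim} to obtain a commutative square $Y'\to X'\to X$, $Y'\to Y$, with $Y'\to X$ prime to $p$ Galois, $X'\to X$ cyclic of degree dividing $r$, $Y'\to X'$ a quasi-\'etale Galois cover with group $G'$, and $V_{\mid Y'}\cong W'^{\oplus e'}$ where $W'$ is stable and $\det(W')$ descends along $Y'\to X'$ over a big open; uniqueness of the polystable decomposition forces $e\mid e'$ and $e'\le r$. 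Next, for $\dim X\ge2$, I would restrict to a general complete intersection curve $B\subseteq X'$ of sufficiently large degree lying in the big open over which $Y'\to X'$ is \'etale and $\det(W')$ descends: by a restriction theorem for $\mu$-stability \cite{hl} (and a Bertini connectedness argument) $B_{Y'}:=B\times_{X'}Y'\to B$ is then a prime to $p$ \'etale Galois cover with group $G'$, $V_{\mid B_{Y'}}\cong(W'_{\mid B_{Y'}})^{\oplus e'}$ with $W'_{\mid B_{Y'}}$ stable (hence simple), and $\det(W'_{\mid B_{Y'}})$ descends to $B$. Lemma~\ref{lemma-invariant-inclusion} then applies on $B$ and yields $N\trianglelefteq G'$ of index at most $J(e')\le J$ together with a copy of $W'_{\mid B_{Y'}}$ inside $V$ over $B/N$. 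Since this conclusion is extracted from the $\mathrm{Gl}_{e'}$-representation comparing the linearizations of $V$ and $W'^{\oplus e'}$ (on the cyclic twist supplied by Lemma~\ref{lemma-determinant-descend}), whose matrix entries are constant and hence unchanged by restriction to $B$, the same subgroup $N\le G'=\mathrm{Gal}(Y'/X')$ makes $W'\subseteq V_{\mid Y'}$ an $N$-invariant subbundle over $X'$; by Lemma~\ref{lemma-invariant-subsheaf} it descends to a saturated subsheaf of $V$ over $Y'/N$ of rank $r/e'<r$ and slope $\mu(V)$, so $V$ fails to be stable on $Y'/N$. For $\dim X=1$ the restriction step is vacuous and one argues directly, invoking the honest descent lemma for curves (Lemma~\ref{lemma-jochen}) in place of the determinant workaround.

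The step I expect to be the main obstacle is converting ``$V$ not stable on $Y'/N$'' into ``$V$ not stable on an \emph{\'etale} cover of $X$ of degree at most $rJ$'': the cyclic cover $X'\to X$ furnished by Lemma~\ref{lemma-root-preparation} is genuinely ramified, so $Y'/N\to X$ need not be \'etale, and one has to organize the auxiliary tower — or push the destabilizing subsheaf down through the ramified part — so that the resulting detecting cover is \'etale and dominated by $X_{r-good}$; this bookkeeping (and the compatibility of the restriction theorem with the finite list of sheaves in play) is the technical heart of the proof. Once it is carried out the contradiction closes, $e=1$, and the equivalence is proved. For the final sentence, the prime to $p$ stable locus is then the locus where the reflexive hull of the restriction of the universal sheaf to $X_{r-good}$ is $\mu$-stable; this is open because $\mu$-stability is an open condition in flat families \cite{hl} and pulling the universal family back along the finite morphism $X_{r-good}\to X$ — which is flat on a big open — is compatible with forming reflexive hulls, so prime to $p$ stability is open.
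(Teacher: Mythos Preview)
Your overall strategy matches the paper's: define $X_{r-good}$ to dominate $X'_{r-large}$ and all prime to $p$ \'etale covers of degree $\le rJ$, reduce to $V_{\mid Y}\cong W^{\oplus e}$ via Lemma~\ref{lemma-large-cover}, pass through Lemma~\ref{lemma-descend-preparation-higher-dim} to arrange the determinant descent, restrict to a general complete intersection curve, and invoke Lemma~\ref{lemma-invariant-inclusion} to produce a normal subgroup $N\trianglelefteq G'$ of index $\le J(e')$. The gap is precisely the step you flag as ``the main obstacle'' and then leave open: how to convert non-stability on $Y'/N$ into non-stability on an \emph{\'etale} cover of $X$ of small degree. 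The resolution is not bookkeeping but a single idea you already have available. Factor $Y'/N\to X$ into its genuinely ramified and \'etale parts, $Y'/N\to Z\to X$; then $Z\to X$ is prime to $p$ \'etale of degree $\le rJ$, hence dominated by $X_{r-good}$, so $V_{\mid Z}$ is stable. By \cite[Theorem~1.2]{bdp} (the input behind Corollary~\ref{cor-pro-separable-pro-etale}) stability is preserved under genuinely ramified pullback, so $V_{\mid Y'/N}$ is stable as well. This contradicts your destabilizing subsheaf and closes the argument. There is nothing to ``push down through the ramified part''; one pushes stability \emph{up}.

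Two smaller points. First, your lifting of the $N$-invariant inclusion from the curve to $Y'$ via ``constant matrix entries'' is correct in spirit but under-argued: the genuine homomorphism $\rho$ in Lemma~\ref{lemma-invariant-inclusion} lives on the auxiliary cyclic cover produced by Tsen's theorem on the curve and does not globalize to $Y'$. What does globalize is the map $\rho_0(\sigma)=((\psi^{W'}_\sigma)^{-1})^{\oplus e'}\circ\sigma^\ast\psi\circ\psi^V_\sigma\circ\psi^{-1}\in\mathrm{Gl}_{e'}(k)$, which is only a projective representation; the $N$-invariance of an inclusion $W'\hookrightarrow V_{\mid Y'}$ given by $v\in k^{e'}$ is exactly the condition that $v$ be a common eigenvector of $\rho_0(N)$, and this is what is produced on the curve. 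The paper avoids this point entirely: it lifts only the \emph{subgroup} $N$ to an intermediate cover $Y''=Y'/N$ of $X'$ (immediate from $\mathrm{Gal}(Y'/X')=\mathrm{Gal}(B_{Y'}/B)$), proves $V_{\mid Y''}$ stable by the factorization argument above, and then --- having applied the restriction theorem in advance to the stable summands of $V$ on every one of the finitely many intermediate quasi-\'etale covers $Y'\to Y''\to X'$ --- concludes that $V_{\mid D''}$ is stable on the curve, whence the rank-$r/e'$ subbundle $M''$ forces $e'=1$. Second, your openness argument at the end conflates $\mu$-stability of the pullback with prime to $p$ stability on the base; once the equivalence is established, the cleaner route is that the non-prime-to-$p$-stable locus is the image under the finite morphism $\pi^\ast$ of the non-$\mu$-stable locus on $X_{r-good}$, hence closed.
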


\begin{proof}
    Let $X_{r-good}$ be an \'etale prime to $p$ Galois cover dominating $X'_{r-large}$
    from Lemma \ref{lemma-large-cover} and all prime to $p$ covers of degree $\leq J(r)r$, where $J(r)$
	is the bound from Jordan's theorem, see Theorem \ref{theorem-jordan}.

    The "only if" part is trivial.
    For the "if" part let $V$ be a vector bundle of rank $r$ on $X$ such that $V_{\mid X_{r-good}}$ is stable. 
    Consider an \'etale prime to $p$ Galois cover
	$Y\to X$ and let $V_{\mid Y}\cong W^{\oplus e}$ be the decomposition
	of Lemma \ref{lemma-pullback-galois}.
	Applying Lemma \ref{lemma-descend-preparation-higher-dim}
	we obtain a commutative diagram
	\[
	\begin{tikzcd}
		Y'\ar[r] \ar[d] & X'\ar[d]\\
		Y \ar[r] & X
	\end{tikzcd}
	\]
	satisfying the properties (i) - (iv) of Lemma
	\ref{lemma-descend-preparation-higher-dim}. 
	In particular, we have an isomorphism $V_{\mid Y'}\cong W'^{\oplus e'}$ for some stable bundle $W'$
	such that $\det(W')$ descends on some big open along $Y'\to X'$.

    Observe that $V':=V_{\mid X'}$ is stable
    as the degree of $X'\to X$ is at most $r$.
    
    By Bertini's theorem the general complete intersection curve $C'$ in $X'$ is irreducible
    and irreducible after pullback to $Y'$, see \cite[Corollaire 6.11 (3)]{jou}.
    Furthermore, the general such $C'$ is also normal by \cite[Theorem 7]{sei}.
    The general hyperplane section intersects the locus where $Y'\to X'$ is not \'etale transversally.
    As $Y'\to X'$ is quasi-\'etale we obtain that the pullback $D'$ of the general such $C'$ is an \'etale cover of $C'$.
    We also note that $D'\to C'$ is an \'etale Galois cover with the same Galois group as $Y'\to X'$.
    
    Observe that there are only finitely many intermediate 
    quasi-\'etale Galois covers $Y'\to Y''\to X'$, where $Y''$ is a normal projective variety.
    On $Y''$ the bundle $V'$ decomposes as $V'_{\mid Y''}\cong W''^{\oplus e''}$ for some stable bundle $W''$ on $Y''$.
    Iterating the restriction theorem in 
    arbitrary characteristic for normal projective varieties, 
    see \cite[Theorem 0.1]{la24} for positive characteristic and \cite[Theorem 7.17]{yeh} for arbitrary characteristic,
    we find that restricting $W''$ to $D'':=Y''\times_{X'}C'$ is stable, where $C'$ is a 
    general complete intersection curve
    in $c_1(\mathcal{O}_{X'}(-N_1))\dots c_1(\mathcal{O}_{X'}(-N_{n-1}))$ for $N_i\gg 0$.
	
    Restricting the decomposition $V'_{\mid Y'}\cong W'^{\oplus e'}$ of $V'$ on $Y'$ 
    to such a $D':=Y'\times_{X'}C'$ we obtain an isomorphism
    $(V'_{\mid C'})_{\mid D'}\cong (W'_{\mid D'})^{\oplus e'}$.
    Note that $W'_{\mid D'}$ is stable and for general $C'$ its determinant 
    $\det(W'_{\mid D'})$ descends to $C'$ by property (i) of Lemma \ref{lemma-descend-preparation-higher-dim}. 
    Hence, we are in a position to apply Lemma \ref{lemma-invariant-inclusion}.
    Thus, there is an intermediate cover $D' \to D''\to C'$ of degree $\leq J(e')$
    such that there is a stable subbundle $M''\subseteq V'_{\mid D''}$ pulling back to $W'_{\mid D'}$ on $D'$.
	
    The intermediate cover $D' \to D''\to C'$ can be lifted to a quasi-\'etale factorization of $Y'\to Y''\to X'$.
    Indeed, let $K$ be the kernel of the natural morphism $\mathrm{Gal}(D'/D'')\to \mathrm{Gal}(D'/C')$.
    As $\mathrm{Gal}(Y'/X')=\mathrm{Gal}(D'/C')$
    we can define $Y''$ to be the normalization of $X'$ in the field extension $\kappa(Y')^{K}/\kappa(X')$.
    
    Note that $Y''\to X$ is prime to $p$ of degree at most $rJ(e')\leq rJ(r)$.
    Consider the factorization $Y'' \to Y'''\to X$ into its \'etale and
    genuinely ramified part.
    We find that $V_{\mid Y'''}$ is stable by assumption. By 
    \cite[Theorem 2.5]{bdp} genuinely ramified covers preserve stability and the bundle $V_{\mid Y''}=V'_{\mid Y''}$ is stable as well.
    Thus, $V'_{\mid Y''}\cong W''$ and we obtain the stability of $V'_{\mid D''}$. Therefore, $V'_{\mid D''}\cong M''$ and
    pulling back to $D'$ we find $V'_{\mid D'}\cong W'_{\mid D'}$, i.e., $e'=1$.
    Clearly, $e\leq e'$ and we conclude that $V_{\mid Y}$ is stable.
\end{proof}

\begin{remark}
    We can interpret Theorem \ref{theorem-very-large-cover} in terms of prime to p \'etale trivializable bundles of rank $r\geq 2$, i.e., bundles that become trivial after pullback to some \'etale prime to $p$ Galois cover. 
    Such bundles correspond to a $\gl_r$-representation of the prime to $p$ completion $\pi_{\et}'(X)$. Moreover, stable prime to $p$ \'etale trivializable bundles correspond to irreducible representations of $\pi_{\et}'(X)$. Then Theorem \ref{theorem-very-large-cover}
    says that such an irreducible representation of rank $r$ becomes reducible after restricting along $\pi_{\et}'(X_{r,good})\to \pi_{\et}'(X)$.

    For such a statement an \'etale prime to $p$ Galois cover dominating all
    \'etale covers of degree bounded by the constant of Jordan's theorem would
    suffice. Indeed, any representation of $\rho:\pi'_{\et}(X)\to \gl_r$ factors
    via a finite prime to $p$ subgroup $G$ of $\gl_r$ such that $G$ is a
    quotient of $\pi'_{\et}(X)$. By (the analogue of) Jordan's theorem, Theorem
    \ref{theorem-jordan}, there exists a finite abelian subgroup $N$ of $G$ of
    index at most $J(r)$. Then $G$ corresponds to an \'etale Galois cover $Y\to
    X$ with Galois group $G$ and the subgroup $N$ corresponds to an intermediate
    \'etale Galois cover $Y\to Y/N\to X$.
    By construction the restriction of the representation $\rho$ to
    $\pi_{\et}'(Y/N)$ becomes reducible as the abelian group $N$ does not admit
    irreducible representation of degree $r\geq 2$.

    We summarize the argument in a commutative diagram of \'etale Galois covers
    \[
    \begin{tikzcd}
        \ & \ & X_{r,good} \ar[d] \ar[dl,dotted] \\
        Y\ar[r, "N"] & Y/N \ar[r,"G/N"] & X,
    \end{tikzcd}
    \]
    where we obtained the horizontal factorization via Jordan's theorem and
    the dotted arrow using the finiteness of \'etale covers of bounded degree.
\end{remark}

\section{Proof of Theorem 2}
Consider a smooth projective curve $C$ of genus $g_C\geq 2$.
To obtain the non-emptiness of the locus of prime to $p$ stable bundles $M^{p'-s,r,d}_C$
we find estimates for the dimension of the complement 
\[
    Z:=M^{s,r,d}_C \setminus M^{p'-s,r,d}_C.
\]
This complement decomposes into two strata $Z=Z_1 \sqcup Z_2$, where
\[
    Z_1:=\{ V\in M^{s,r,d}_C \mid V_{\mid C_{r-good}}\cong W^{\oplus e}, W \text{ stable on } C_{r-good}, e\geq 2\} \text{ and}
\]
\[
    Z_2:=\{ V\in M^{s,r,d}_C \mid V_{\mid C_{r-good}}\cong \bigoplus_{i=1}^n W_i^{\oplus e}, W_i \text{ stable on } C_{r-good}, n\geq 2\}
\]
are obtained via applying Lemma \ref{lemma-pullback-galois} to $C_{r-good}\to C$.

To this end we first reprove a theorem due to Faltings asserting that
pullback by a cover induces a finite morphism on the level of moduli spaces of semistable vector bundles. This gives us the flexibility 
to compute the dimension after such a pullback.

Finding an estimate for $\dim(Z_2)$ is fairly simple: the transitive action of the Galois group
allows us to essentially recover the decomposition $V_{| C_{r-good}}\cong\bigoplus_{i=1}^n W_i^{\oplus e}$
from a semistable vector bundle $W'$ on an intermediate cover $D'\to C$ of degree $n$.

To find an estimate for $\dim(Z_1)$ one has to compare the notions of $G$-linearization and $G$-invariance.
While a $G$-invariant vector bundles might not descend,
a simple $G$-invariant bundle does so up to twist by a line bundle.

\subsection{Pullback is finite}

\begin{lemma}
	\label{lemma-etale-pushforward}
	Let $\pi:D\to C$ be an \'etale cover of smooth projective curves.
	Then we have the following:
	\begin{enumerate}[(i)]
		\item The pushforward of a semistable bundle on $D$ to $C$ is
			semistable.
		\item Let $V$ be a semistable vector bundle on $C$.
			Then $\pi_{\ast}\mathcal{O}_D\otimes V$ is semistable
			of slope $\mu(V)$.
	\end{enumerate}
\end{lemma}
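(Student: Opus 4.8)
The plan is to reduce (i) to a statement about subbundles that is visible after pulling back along $\pi$, using the adjunction $\Hom_C(\mathcal{G}, \pi_\ast \mathcal{F}) = \Hom_D(\pi^\ast \mathcal{G}, \mathcal{F})$ together with the degree formula for étale covers. First I would record that for an étale cover of degree $d$ one has $\deg(\pi_\ast \mathcal{F}) = \deg(\mathcal{F})$ (since $\pi^\ast \mathcal{O}_C$-degrees are unaffected and the Euler characteristic is preserved, $\pi_\ast$ being exact and $\pi$ étale so that the ramification term vanishes), and $\rk(\pi_\ast \mathcal{F}) = d\,\rk(\mathcal{F})$; hence $\mu(\pi_\ast\mathcal{F}) = \mu(\mathcal{F})/d$. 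Now suppose $\mathcal{F}$ is semistable on $D$ and let $0 \neq \mathcal{G} \subsetneq \pi_\ast \mathcal{F}$ be a subbundle; I want $\mu(\mathcal{G}) \leq \mu(\pi_\ast \mathcal{F})$. The inclusion $\mathcal{G} \hookrightarrow \pi_\ast\mathcal{F}$ corresponds by adjunction to a map $\pi^\ast \mathcal{G} \to \mathcal{F}$; its image $\mathcal{F}'$ is a subsheaf of $\mathcal{F}$, so by semistability of $\mathcal{F}$, $\mu(\mathcal{F}') \leq \mu(\mathcal{F})$. I then need to compare $\mu(\mathcal{G})$ with $\mu(\mathcal{F}')$: the point is that $\pi^\ast \mathcal{G} \to \mathcal{F}'$ is generically surjective, so $\deg(\mathcal{F}') \geq \deg(\pi^\ast\mathcal{G}) = d\deg(\mathcal{G})$ while $\rk(\mathcal{F}') \leq \rk(\pi^\ast \mathcal{G}) = \rk(\mathcal{G})$; combined with $\deg(\pi^\ast\mathcal{G}) = d\deg(\mathcal{G})$ one deduces $\mu(\mathcal{G}) \leq \mu(\mathcal{F}')/d \leq \mu(\mathcal{F})/d = \mu(\pi_\ast\mathcal{F})$. (The cleanest version is to pass to the maximal destabilizing subsheaf of $\pi_\ast\mathcal{F}$ and run the argument there, so that one genuinely gets an inequality on slopes rather than having to track ranks carefully; alternatively invoke Lemma \ref{lemma-stability-pullback}(i), which already packages $\mu(\mathcal{G}) = d(\mu(\pi_\ast\mathcal{G}) - \mu(\pi_\ast\mathcal{O}_D))$ and its relatives.)

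For (ii), I would use the projection formula $\pi_\ast \mathcal{O}_D \otimes V \cong \pi_\ast(\pi^\ast V)$. By Lemma \ref{lemma-stability-pullback}(iii) the bundle $\pi^\ast V$ is semistable on $D$ (here $\pi^\ast V$ is already a vector bundle so the reflexive hull is itself), hence by part (i) its pushforward $\pi_\ast(\pi^\ast V) = \pi_\ast\mathcal{O}_D \otimes V$ is semistable on $C$. The slope is $\mu(\pi_\ast\mathcal{O}_D \otimes V) = \mu(\pi^\ast V)/d = (d\mu(V))/d = \mu(V)$ by the degree computation above (equivalently, $\deg(\pi_\ast\mathcal{O}_D \otimes V) = \deg(\pi^\ast V) = d\deg(V)$ and $\rk = d\,\rk(V)$).

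The only genuine subtlety is the one flagged above: in proving (i) one must be careful that an arbitrary subbundle $\mathcal{G}$ of $\pi_\ast\mathcal{F}$ need not be of the form $\pi_\ast(\text{something})$, so the adjunction produces a sheaf map $\pi^\ast\mathcal{G} \to \mathcal{F}$ whose image can have strictly smaller rank, and one has to check the slope bookkeeping goes the right way. Reducing to the HN-maximal destabilizer of $\pi_\ast\mathcal{F}$, or simply citing the degree/rank formulas of Lemma \ref{lemma-stability-pullback}, disposes of this cleanly; everything else is the projection formula and the additivity of degree and rank in exact sequences, which are routine.
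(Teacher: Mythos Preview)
Your argument for (ii) via the projection formula, part (i), and Lemma~\ref{lemma-stability-pullback}(iii) is correct and matches the paper's proof exactly.

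For (i), however, the direct argument you give for an arbitrary subbundle $\mathcal{G}\subset\pi_*\mathcal{F}$ contains a false inequality. You write that since $\pi^*\mathcal{G}\to\mathcal{F}'$ is (generically) surjective onto its image, $\deg(\mathcal{F}')\geq\deg(\pi^*\mathcal{G})$. But a surjection $\pi^*\mathcal{G}\twoheadrightarrow\mathcal{F}'$ only gives $\deg(\mathcal{F}')=\deg(\pi^*\mathcal{G})-\deg(K)$ with $K$ the kernel, and there is no reason for $\deg(K)\leq 0$. (Even granting the inequality, combining it with $\rk(\mathcal{F}')\leq\rk(\mathcal{G})$ does not yield $d\,\mu(\mathcal{G})\leq\mu(\mathcal{F}')$ when the degrees involved are negative.) So this bookkeeping does not go through.

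The fix you mention parenthetically---pass to the maximal destabilizing subbundle $\mathcal{G}$ of $\pi_*\mathcal{F}$---is exactly what the paper does, and it is the correct route. The point is that $\mathcal{G}$ is then itself semistable, so $\pi^*\mathcal{G}$ is semistable by Lemma~\ref{lemma-stability-pullback}(iii); the nonzero adjoint map $\pi^*\mathcal{G}\to\mathcal{F}$ is therefore a nonzero morphism between semistable bundles, forcing $\mu(\pi^*\mathcal{G})\leq\mu(\mathcal{F})$, i.e.\ $d\,\mu(\mathcal{G})\leq\mu(\mathcal{F})=d\,\mu(\pi_*\mathcal{F})$, contradicting the assumption that $\mathcal{G}$ destabilizes. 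No analysis of the image $\mathcal{F}'$ is needed.
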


\begin{proof}
	(i) This short argument can already be found in the proof of \cite[Proposition 5.1]{bp} for line bundles of degree $1$.
        
        Let $W$ be a semistable bundle of slope $\mu$ and rank $r$ on $D$.
	The pushforward $\pi_{\ast}W$ has slope $\mu/\text{deg}(\pi)$.
	If $\pi_{\ast}W$ was not semistable, consider the maximal destabilizing
	subbundle $V$ of $\pi_{\ast}W$. By adjunction $\pi^{\ast}V\to W$ is a
	non-zero morphism of semistable bundles. As
	\[
	\mu(\pi^{\ast}V)
	=\text{deg}(\pi)\mu(V)>\text{deg}(\pi)\mu(\pi_{\ast}W)
	=\mu(W)
	\]
	this is a contradiction.
	
	(ii) Let $V$ be a semistable bundle on $C$.
	As $\pi$ is \'etale the bundle $\pi_{\ast}\mathcal{O}_D$ is
	of degree $0$ by Riemann-Hurwitz. We obtain
	\[
            \mu(V)=\mu(\pi_{\ast}(\mathcal{O}_D))+\mu(V)=\mu(\pi_{\ast}(\mathcal{O}_D)\otimes V).
        \]
	By the projection formula we have $\pi_{\ast}\mathcal{O}_D\otimes
	V\cong \pi_{\ast}\pi^{\ast}V$ which is semistable by (i) and
	Lemma \ref{lemma-stability-pullback} (iii).
\end{proof}

    As semistable bundles stay semistable under pullback by a cover $\pi:D\to C$, 
    we obtain a morphism $\pi^{\ast}:M^{ss,r,d}_C \to M^{ss,r,\deg(\pi)d}_D$.
	The finiteness of $\pi^{\ast}$ can be proven using the degree of the theta divisor.
	This can be found in
	\cite[Theorem 4.2]{heingen} and goes back to \cite[Theorem I.4]{faltings}.

	Here we give a shorter proof only using \cite[Lemma 4.3]{bp} and basic 
        properties of finite \'etale morphisms.
\begin{theorem}	\label{theorem-finite-fibres}
	Let $\pi:D\to C$ be a cover of smooth projective curves.  
	Let $r\geq 1$ and $d\in \mathbf{Z}$.
	Then the induced morphism 
	\[
	    \pi^{\ast}:M^{ss,r,d}_C\to M^{ss,r,\text{deg}(\pi)d}_D 
	\]
	is finite. If $e$ denotes the degree of the \'etale part of $\pi$,
	then the fibre of $\pi^{\ast}$ at a stable bundle $W$ on $D$ has cardinality at most $e$.
\end{theorem}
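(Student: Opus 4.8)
The plan is to split $\pi$ into its \'etale and genuinely ramified parts and handle each separately. Factor $\pi$ as $D\xrightarrow{\,g\,}\tilde{C}\xrightarrow{\,h\,}C$ with $\tilde{C}\to C$ the maximal \'etale subcover, so $h$ is \'etale of degree $e$ and $g$ is genuinely ramified; then $\pi^{\ast}=g^{\ast}\circ h^{\ast}$. Since for fixed rank and degree the moduli space of semistable bundles on a smooth projective curve is projective, both $g^{\ast}$ and $h^{\ast}$ are morphisms of projective varieties, hence proper. As a proper quasi-finite morphism is finite, it suffices to show each of $g^{\ast}$, $h^{\ast}$ has finite fibres, and to keep track of the cardinality over a stable bundle.

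For $g$ genuinely ramified I would use that $g^{\ast}$ preserves stability, see \cite[Theorem 1.2]{bdp}, together with \cite[Lemma 4.3]{bp} on the structure of $g_{\ast}\mathcal{O}_D$: this gives $\Hom_D(g^{\ast}A,g^{\ast}B)\cong\Hom_{\tilde C}(A,B\otimes g_{\ast}\mathcal{O}_D)=\Hom_{\tilde C}(A,B)$ for semistable bundles $A,B$ of equal slope, since every Harder-Narasimhan slope of $g_{\ast}\mathcal{O}_D$ other than the distinguished summand $\mathcal{O}_{\tilde C}$ is negative. In particular $g^{\ast}$ is injective on isomorphism classes of polystable bundles, hence quasi-finite, hence finite; and since $g^{\ast}$ of a strictly semistable bundle is strictly semistable, the fibre of $g^{\ast}$ over a stable bundle $W$ is the (at most one) stable bundle $W''$ on $\tilde{C}$ with $g^{\ast}W''\cong W$.

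For $h$ \'etale of degree $e$ the relevant input is Lemma~\ref{lemma-etale-pushforward}: $h_{\ast}$ preserves semistability, $\deg(h_{\ast}\mathcal{O}_{\tilde C})=0$, and $h_{\ast}h^{\ast}V\cong V\otimes h_{\ast}\mathcal{O}_{\tilde C}$ is semistable of slope $\mu(V)$. Finiteness of all fibres: a polystable $V=\bigoplus_i V_i^{n_i}$ lying over a polystable $T$ on $\tilde{C}$ has each $V_i$ appearing as a sub-bundle of $V_i\otimes h_{\ast}\mathcal{O}_{\tilde C}$, hence as a Jordan-H\"older constituent of $h_{\ast}h^{\ast}V$, whose S-equivalence class is determined by $[T]$; so all $V_i$ lie in a fixed finite set and there are only finitely many such $V$. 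For the bound, fix $W''$ stable on $\tilde{C}$; any $V$ with $h^{\ast}V\cong W''$ embeds via the unit as a saturated sub-bundle $V\hookrightarrow h_{\ast}h^{\ast}V\cong h_{\ast}W''$ (saturated because $V$ is stable and the ambient bundle is semistable of the same slope), and by adjunction the composite $h^{\ast}V\to h^{\ast}h_{\ast}W''\xrightarrow{\mathrm{counit}}W''$ is an isomorphism. The counit splits since $h$ is separable \'etale, so $h^{\ast}h_{\ast}W''\cong W''\oplus K$ and each $h^{\ast}V$ is the graph of a homomorphism $W''\to K$; distinct $V$ give distinct graphs by faithfully flat descent of subsheaves, hence distinct full-slope saturated sub-bundles of $h_{\ast}W''$.

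The main obstacle is the last step: improving the crude bound $\rk(h_{\ast}W'')=er$ on the number of such sub-bundles down to $e$. For this I would pass to the Galois closure $p\colon\hat{C}\to C$ with group $G:=\mathrm{Gal}(\hat{C}/C)$, write $\tilde{C}=\hat{C}/H$ with $[G:H]=e$, and observe that every $V$ in the fibre has the \emph{same} pullback $\hat{V}:=p^{\ast}V\cong q^{\ast}W''$ to $\hat{C}$, where $q\colon\hat{C}\to\tilde{C}$. By Lemma~\ref{lemma-pullback-galois} one has $\hat{V}\cong(\bigoplus_{j}U_j)^{\oplus f}$ with $G$ acting transitively on the $[U_j]$, and $\Hom_{\hat C}(\hat V,\hat V)^{G}\cong\Hom_C(V,V)=k$; this rigidity should cut the stable descents of $\hat{V}$ to $C$ down to the $[G:H]=e$ cosets, giving the claim. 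Finally $\pi^{\ast}=g^{\ast}\circ h^{\ast}$ is then finite, and its fibre over a stable $W$ is the fibre of $h^{\ast}$ over the unique stable $g^{\ast}$-preimage of $W$, of cardinality $\leq e$.
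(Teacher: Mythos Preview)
Your overall strategy matches the paper's: reduce finiteness to quasi-finiteness via properness, factor into \'etale and genuinely ramified parts, handle the genuinely ramified part by injectivity on polystable bundles via \cite[Lemma~4.3]{bp}, and handle the \'etale part by recognising each $V$ in the fibre as a Jordan--H\"older constituent of $h_{\ast}h^{\ast}V$. All of this is correct and is essentially what the paper does (the paper additionally passes to a Galois closure to streamline the quasi-finiteness argument, but this is cosmetic).

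The gap is in your bound for the \'etale part. You correctly embed each stable $V$ with $h^{\ast}V\cong W''$ into $h_{\ast}W''$, but then you switch to counting \emph{sub-bundles} of $h_{\ast}W''$ via graphs inside $h^{\ast}h_{\ast}W''$ and a Galois-closure argument whose last step (``this rigidity should cut the stable descents down to the $[G:H]=e$ cosets'') is not actually carried out. Counting sub-bundles is the wrong invariant here: a polystable bundle can have infinitely many stable sub-bundles of a given rank, so the ``crude bound $er$'' you mention is not even a bound in that sense. The paper's argument is much shorter and never leaves the Jordan--H\"older framework you already set up: non-isomorphic $V,V'$ in the fibre are non-isomorphic stable JH constituents of the fixed semistable bundle $h_{\ast}W''$, which has rank $er$; since each such $V$ has rank $r$, there are at most $e$ non-isomorphic stable constituents of rank $r$. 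That is the whole bound; no Galois closure is needed.
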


\begin{proof}
        First observe that $\pi^{\ast}$ is a morphism of projective varieties. 
        Thus, it suffices to show that it is quasi-finite.
        Furthermore, it suffices to prove the quasi-finiteness for Galois covers
        as every cover is dominated by a Galois cover.

	As each cover factors as an \'etale cover and a
	genuinely ramified cover it suffices to show the
	theorem for these two types of morphisms separately.

        The genuinely ramified case immediately follows from \cite[Lemma 4.3]{bp}.
        In fact, the lemma tells us that $\pi^{\ast}$ is injective on points: 
        If two polystable bundles on $C$ become isomorphic on $D$, then they
        are already isomorphic on $C$.

	It remains to consider the case where $\pi$ is an \'etale Galois cover.
	Let $V$ be a polystable bundle on $C$. 
	Consider the polystable bundle $\pi^{\ast}V\cong \bigoplus W_i$, 
	where the $W_i$ are stable on $D$, see Lemma \ref{lemma-stability-pullback}.
	By Lemma \ref{lemma-etale-pushforward} all bundles $\pi_{\ast}W_i$ are semistable
	of slope $\mu(V)$. The projection formula implies that
	$\pi_{\ast}(\mathcal{O}_D)\otimes V\cong\pi_{\ast}\pi^{\ast}V$.
	Thus, $V\subseteq \pi_{\ast}\pi^{\ast}V$ appears in the JH-filtration of $\bigoplus \pi_{\ast}W_i$.
	As the graded object associated to the JH-filtration
	is unique, there are only finitely many choices for $V$ if we fix $\bigoplus W_i$.

	If $V_{\mid D}\cong W$ is stable on $D$, then comparing the ranks of $V$ and $\pi_{\ast}W$ 
	we find that there can be at most $\deg(\pi)$ many different such $V$.
\end{proof}

\subsection{Strata and dimension}

In this subsection we complete the proof of Theorem 2 by a dimension estimate
on the complement of the prime to $p$ stable locus.
Consider an \'etale Galois cover $D\to C$ of smooth projective curves 
with Galois group $G$.
There are two different cases depending on whether $n=1$ or $n\geq 2$ in the decomposition $V_{\mid D}\cong \bigoplus_{i=1}^n W_i^{\oplus e}$ of Lemma \ref{lemma-pullback-galois}.
If $n=1$, then there is only one isomorphism class on which $G$ acts.
This does not mean that $W_1$ descends to $C$. However, it does up to a twist by a line bundle as it is simple.
\begin{lemma}
	\label{lemma-jochen} 
	Let $\pi:D\to C$ be an \'etale Galois cover of smooth projective curves
	with Galois group $G$.
	Let $W$ be a simple bundle of rank $r$ on $D$ which is $G$-invariant.
	Then there exists a line bundle $L$ on $D$ such that
	$W\otimes L$ descends to $C$.
\end{lemma}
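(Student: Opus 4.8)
The plan is to exploit the $G$-invariance of $W$ to build a canonical gluing datum, failing only by a line-bundle-valued cocycle, and then to absorb that cocycle into a twist. First I would fix, for each $\sigma\in G$, an isomorphism $\psi_\sigma:W\xrightarrow{\sim}\sigma^\ast W$, which exists by hypothesis. Since $W$ is simple, for any $\sigma,\tau\in G$ the composite $\psi_{\sigma\tau}^{-1}\circ\tau^\ast\psi_\sigma\circ\psi_\tau$ is a scalar $c_{\sigma,\tau}\in k^\ast=\mathrm{Aut}(W)$, and the associativity of the $\psi$'s shows that $(c_{\sigma,\tau})$ is a $2$-cocycle, i.e.\ a class $c\in H^2(G,k^\ast)$ where $G$ acts trivially on $k^\ast$ (scalars are unchanged by pullback). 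The obstruction to descending $W$ itself is precisely this class. The key point is that this class dies after twisting: we want a line bundle $L$ on $D$, a $G$-invariant structure $\phi_\sigma:L\xrightarrow{\sim}\sigma^\ast L$, with associated scalar cocycle $d_{\sigma,\tau}\in H^2(G,k^\ast)$ equal to $c_{\sigma,\tau}^{-1}$; then $W\otimes L$ carries the honest $G$-linearization $\psi_\sigma\otimes\phi_\sigma$ and hence descends along the \emph{\'etale} Galois cover $\pi$ (for \'etale Galois covers a linearization is the same as a descent datum).

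The remaining task is to realize every class in $H^2(G,k^\ast)$ by a $G$-invariant line bundle. Here I would use that $H^2(G,k^\ast)$ is the Schur multiplier of $G$ (finite), and that a class is represented by a central extension $1\to k^\ast\to \tilde G\to G\to 1$; more concretely, it suffices to realize classes of the form $H^2(G,\mu_m)\to H^2(G,k^\ast)$ for $m$ killing the multiplier. A clean route: take a line bundle $P$ of degree coprime to $|G|$ on $C$ (or simply $\mathcal{O}_C(x)$ for a point $x$), and consider the $G$-invariant line bundle $\det(W)\otimes(\pi^\ast\text{of a suitable bundle})$ — but the slickest argument is that $\det(W)$ is itself $G$-invariant with associated cocycle $c_{\sigma,\tau}^{\,r}$ (the determinant of the scalar matrix $c_{\sigma,\tau}\cdot\mathrm{id}_r$), so $c^r=0$ in $H^2(G,k^\ast)$ as $\det(W)$ descends up to... no: $\det W$ need not descend either. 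Instead I would argue directly that $H^2(G,k^\ast)$ is generated by classes coming from $\mu_n\subset k^\ast$, and construct a $G$-invariant line bundle with a prescribed $\mu_n$-cocycle by taking an $n$-torsion line bundle on $D$ that is pulled back from a cyclic cover structure, or by invoking that $\Pic(D)$ has enough $n$-torsion (since $D$ has genus $\ge 1$ once $r,g_C$ are in the relevant range; for $g_D=0$ the statement is trivial as $G$ is trivial). The construction of this twisting line bundle with the exact prescribed cocycle is the main obstacle.

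To make the cocycle-matching precise, I would set up the Leray/Hochschild--Serre spectral sequence for $\pi:D\to C$ (an \'etale $G$-torsor), giving an exact sequence relating $\Pic(C)$, $\Pic(D)^G$, $H^2(G,k^\ast)$ and $H^2(C,\mathbb{G}_m)$; concretely $\Pic(D)^G/\,\mathrm{im}(\Pic C)\hookrightarrow H^1(G,k^\ast)=0$ and the obstruction map lands the class $c$ of $W$ in $H^2(G,k^\ast)$, whose image in the Brauer group $H^2(C,\mathbb G_m)$ vanishes by Tsen's theorem (exactly as used in Lemma \ref{lemma-determinant-descend}: $\kappa(C)$ is $C_1$). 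Vanishing in $\mathrm{Br}(C)$ means $c$ lifts to, equivalently is killed by modifying by, a line bundle class; unwinding the spectral sequence, this says precisely that there is a $G$-invariant line bundle $L$ on $D$ whose cocycle cancels $c$, i.e.\ $W\otimes L$ admits a $G$-linearization and therefore descends to $C$.

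Thus the steps, in order, are: (1) reduce, using simplicity of $W$, the descent obstruction to a class $c\in H^2(G,k^\ast)$ with trivial action; (2) observe that realizing $-c$ by a $G$-invariant line bundle suffices, since over an \'etale Galois cover a $G$-linearization is descent data; (3) run the Hochschild--Serre spectral sequence for the $G$-torsor $D\to C$ and identify the group of $G$-invariant line bundles modulo pullbacks with a subgroup of $H^2(G,k^\ast)$ mapping to $\mathrm{Br}(C)$; (4) invoke Tsen's theorem ($\kappa(C)$ is $C_1$, hence $\mathrm{Br}(\kappa(C))=0$, hence $\mathrm{Br}(C)=0$) to conclude $c$ dies in $\mathrm{Br}(C)$ and is therefore hit by a $G$-invariant line bundle; (5) take that $L$ and conclude $W\otimes L$ descends. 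The main obstacle is step (3)–(4): matching the abstract cohomological obstruction of $W$ with an actual $G$-invariant line bundle, i.e.\ proving surjectivity of the relevant map from $\Pic(D)^G$ onto the kernel of $H^2(G,k^\ast)\to\mathrm{Br}(C)$; this is where the low-dimensionality of the base curve (Tsen) is essential, and it parallels the argument already carried out in Lemma \ref{lemma-determinant-descend}.
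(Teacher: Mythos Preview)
Your approach is correct and reaches the conclusion, but by a genuinely different route than the paper. Both proofs hinge on the vanishing of $\mathrm{Br}(C)$ (Tsen), yet deploy it differently. The paper argues via non-abelian cohomology: since the descent obstruction for $W$ is scalar-valued, $W$ descends to $C$ as a $\mathrm{PGl}_r$-torsor; the exact sequence $\check H^1_{\et}(C,\mathrm{Gl}_r)\to\check H^1_{\et}(C,\mathrm{PGl}_r)\to\check H^2_{\et}(C,\mathbb{G}_m)=0$ then lifts this to a vector bundle $N$ on $C$, and $N_{\mid D}$ agrees with $W$ as a $\mathrm{PGl}_r$-torsor, hence differs from it by a line-bundle twist. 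You instead keep the cocycle $c\in H^2(G,k^\ast)$ explicit and use the five-term Hochschild--Serre sequence for $\mathbb{G}_m$ along the $G$-torsor $D\to C$: since $\mathrm{Br}(C)=0$, the transgression $\Pic(D)^G\to H^2(G,k^\ast)$ is surjective, so $c^{-1}$ is realized by some $G$-invariant line bundle $L$, and $W\otimes L$ then acquires an honest linearization and descends. The paper's route is slicker and sidesteps the identification of the transgression with the descent obstruction; yours is more hands-on and makes the twisting mechanism transparent.

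One correction: your parenthetical assertion $\Pic(D)^G/\mathrm{im}(\Pic C)\hookrightarrow H^1(G,k^\ast)=0$ is garbled on two counts --- the relevant edge map goes to $H^2(G,k^\ast)$, not $H^1$, and $H^1(G,k^\ast)=\Hom(G,k^\ast)$ is typically nonzero. This slip is harmless: your argument only needs the surjectivity of $\Pic(D)^G\to H^2(G,k^\ast)$, and that follows cleanly from the five-term sequence once $\mathrm{Br}(C)=0$. The digression about $\det W$ and $n$-torsion in $\Pic(D)$ can be deleted; the spectral-sequence argument already does all the work.
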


\begin{proof}

	Note that for a smooth algebraic group $G$ a $G$-torsor over $C$
	corresponds to an element of $\check{H}^1_{\et}(C,G)$ 
        as a smooth morphism admits \'etale locally a section.
	The same holds for $D$.
	
	We have $H^2_{\et}(C,\mathbb{G}_m)=0$, see
	\cite[\href{https://stacks.math.columbia.edu/tag/03RM}{Tag 03RM}]{sp},
	similarly for $D$.
	By the 5-term exact sequence of the \v{C}ech to cohomology spectral sequence, see \cite[Corollary 2.10, p.101]{milne},
        we obtain the vanishing of $\check H^2_{\et}$ 
        from the vanishing of $H^2_{\et}$, i.e.,
	\[
		\check H^2_{\et}(C,\mathbb{G}_m)=0=\check H^2_{\et}(D,\mathbb{G}_m).
	\]
	Consider the short exact sequence
	
	\[
		0\to \mathbb{G}_m \to \mathrm{Gl}_r\to \mathrm{PGl}_r\to 0
	\]
        of \'etale sheaves on $C_{\et}$.
	Applying the functors $\Gamma(D,-)$ and $\Gamma(C,-)$
	we obtain a commutative diagram of exact sequences of pointed sets
 \[
    \begin{tikzcd}[column sep=1.45em]
		\check H^1_{\et}(D,\mathbb{G}_m) \ar[r] & \check H^1_{\et}(D,\mathrm{Gl}_r) \ar[r] &
		\check H^1_{\et}(D,\mathrm{PGl}_r) \ar[r]& \check H^2_{\et}(D,\mathbb{G}_m)=0\\
		\check H^1_{\et}(C,\mathbb{G}_m) \ar[u]\ar[r] &\check H^1_{\et}(C,\mathrm{Gl}_r)\ar[u] \ar[r] &
		\check H^1_{\et}(C,\mathrm{PGl}_r)\ar[u] \ar[r] & \check H^2_{\et}(C,\mathbb{G}_m) = 0 \ar[u, equal].
	\end{tikzcd}
 \]
        As $\mathbb{G}_m$ lies in the center of $\mathrm{Gl}_r$
        this sequence extends to $\check H^2$ and
	exactness at $\check H^1_{\et}(\mathrm{Gl}_r)$ is stronger than usual: 
	If two $\mathrm{Gl}_r$-torsors map to the same 
        $\mathrm{PGl}_r$-torsor they differ
	by a twist of a line bundle.
        In particular, we obtain that a $\mathrm{PGl}_r$-torsor can be lifted to a $\mathrm{Gl}_r$-torsor, 
        which also can be found in \cite[Chapter III]{sga4h}.
        
	The bundle $W$ is an element in $\check H^1_{\et}(D,\mathrm{Gl}_r)$.
	By definition of $G$-invariance we have
	isomorphisms $\psi_{\sigma}:W\xrightarrow{\sim}\sigma^{\ast}W$ for all $\sigma\in G$.
	The obstruction for descent
	$\lambda_{\sigma,\tau}:=\psi_{\sigma\tau}^{-1} \circ \tau^{\ast}\psi_{\sigma}\circ
	\psi_{\tau}$ is an isomorphism of $W$.
	By assumption $W$ is simple and $\lambda_{\sigma,\tau}$ lies in $k^{\ast}$,
	i.e., considered as a $\mathrm{PGl}_r$-torsor $W$ descends to $C$, see \cite[Theorem 1.4.46]{fga}.  
	By the surjectivity $\check{H}^1_{\et}(C,\mathrm{Gl_r})\to \check{H}^1_{\et}(C,\mathrm{PGl}_r)$
	we find a vector bundle $N$ on $C$ such that
	$N_{\mid D}\cong W$ as $\mathrm{PGl}_r$-torsors.
	Thus, the vector bundles $N_{\mid D}$ and $W$ agree up to tensoring
	with a line bundle $L$ on $D$.
\end{proof}

We are now ready to estimate the dimension of the complement of the prime to $p$
stable locus. We formulate this for arbitrary \'etale Galois covers. To obtain
the desired estimate for the prime to $p$ stable locus we apply this to 
the cover $C_{r-good}$ obtained in Theorem \ref{theorem-very-large-cover}.
\begin{lemma}
	\label{lemma-high-codimension}
	Let $\pi:D\to C$ be an \'etale Galois cover 
        of a smooth projective curve $C$ of
	genus $g_C \geq 2$.
	Let $r\geq 2$ and $d\in \mathbf{Z}$.
	Denote by $Z$ the closed subset of
	$M^{s,r,d}_C$ given by stable bundles that do not remain stable
	after pullback to $D$.
	Then $Z=Z_1\sqcup Z_2$, where 
	\[
	    Z_1:=\{ V \in M^{s,r,d}_C \mid V_{\mid D}\cong W^{\oplus e}, W\in M^{s,\frac{r}{e},\frac{\deg(\pi)d}{e}}_D, e\geq 2 \} \text{  and }
	\]
	\[
	    Z_2:=\{ V \in M^{s,r,d}_C \mid V_{\mid D}\cong \bigoplus_{i=1}^n W_i^{\oplus e}, W_i\in M^{s,\frac{r}{en},\frac{\deg(\pi)d}{en}}_D, n\geq 2 \}
	\]
	are the strata induced by Lemma \ref{lemma-pullback-galois}.
	Furthermore, 
	\[
	    \dim(Z_1)\leq r_0^2(g_C-1)+1 \text{ and } \dim(Z_2)\leq r_0r(g_C-1)+1,
	\]
	where $r_0$ is the largest proper divisor of $r$, i.e., $r_0\mid r$ and $r_0\neq r$.
	
	If $\pi$ is a prime to $p$ cover and $r$ is a power of $p$, then $Z_2$ is empty.
\end{lemma}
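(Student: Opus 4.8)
The plan is to first establish the set-theoretic decomposition $Z = Z_1\sqcup Z_2$ together with the closedness of $Z$, and then bound $\dim(Z_1)$ and $\dim(Z_2)$ separately by recovering $V$, up to finitely many choices, from bundles of small rank living on $C$ or on an intermediate cover of small degree. The decomposition and the stated ranks and degrees are immediate from Lemma \ref{lemma-pullback-galois}: a stable bundle $V$ of rank $r$ satisfies $V_{\mid D}\cong(\bigoplus_{i=1}^n W_i)^{\oplus e}$ with the $W_i$ pairwise non-isomorphic, stable of rank $r/(ne)$ and, since $\deg(\pi^\ast V)=\deg(\pi)d$ for the étale cover $\pi$, of degree $\deg(\pi)d/(ne)$; and $V_{\mid D}$ is unstable exactly when $ne\geq 2$, the two mutually exclusive cases $n=1,\,e\geq 2$ and $n\geq 2$ being $Z_1$ and $Z_2$. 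Closedness follows from Lemma \ref{lemma-stability-pullback}(iii), which makes $\pi^\ast$ a morphism $M^{ss,r,d}_C\to M^{ss,r,\deg(\pi)d}_D$, so that $Z = M^{s,r,d}_C\cap(\pi^\ast)^{-1}(M^{ss,r,\deg(\pi)d}_D\setminus M^{s,r,\deg(\pi)d}_D)$.

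For $Z_2$, given $V\in Z_2$ I would fix a summand $W_i^{\oplus e}$ with stabiliser $H\subseteq G$; by Lemma \ref{lemma-decomposition-small-degree} the cover factors as $D\to D':=D/H\xrightarrow{\pi'}C$ with $\deg(\pi')=n$ and $V_{\mid D'}\cong W'\oplus V'$ for a semistable $W'$ of rank $r/n$ and (one checks, using that $\pi'$ is étale) degree $d$. Since $ne\mid r$ and $n\mid\#G$ we have $n\mid r$ with $n\geq 2$, so there are only finitely many such intermediate covers (Lemma \ref{lemma-finite}). A base-change computation along $D\to C$, using the transitivity of the $G$-action on the $W_i$, gives $(\pi'_\ast W')_{\mid D}\cong V_{\mid D}$; as $\pi'_\ast W'$ is semistable (Lemma \ref{lemma-etale-pushforward}(i)) and $\pi^\ast$ has finite fibres (Theorem \ref{theorem-finite-fibres}), $V$ is determined by $W'$ up to finitely many choices. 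Hence $\dim(Z_2)\leq\max_{n\mid r,\,n\geq 2}\dim M^{ss,r/n,d}_{D'}=\max_n((r/n)^2 n(g_C-1)+1)$, using Riemann--Hurwitz $g_{D'}-1=n(g_C-1)$ and $g_{D'}\geq 2$ since $g_C\geq 2$. Since $n\geq 2$ and $n\mid r$ force $n\geq q$, the smallest prime divisor of $r$, and $r_0=r/q$, this gives $\dim(Z_2)\leq r_0 r(g_C-1)+1$. I also note that if $\pi$ is prime to $p$ and $r$ is a power of $p$ then $Z_2=\emptyset$: for $V\in Z_2$ the integer $n\geq 2$ would divide both $r=p^k$ and $\#G=\deg(\pi)$, which is prime to $p$, so $n\mid\gcd(p^k,\deg(\pi))=1$, a contradiction.

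For $Z_1$, the bundle $W$ with $V_{\mid D}\cong W^{\oplus e}$ is stable, hence simple, and $G$-invariant (it is a pullback), of rank $r/e$, a proper divisor of $r$, so $r/e\leq r_0$. By Theorem \ref{theorem-finite-fibres} the bundle $V$ is determined by $W$ up to finitely many choices, so it suffices to bound the dimension of the locus of such $W$. By Lemma \ref{lemma-jochen} one has $W\cong N_{\mid D}\otimes L^{-1}$ for a stable bundle $N$ on $C$ of rank $r/e$ and a line bundle $L$ on $D$; and taking determinants in $V_{\mid D}\cong W^{\oplus e}$ shows $\det(W)^{\otimes e}=(\det V)_{\mid D}$, whence $L^{\otimes r}$ lies in the image of $\Pic(C)\to\Pic(D)$, so that, for each of the finitely many admissible pairs of degrees of $N$ and $L$, the bundle $L$ ranges over a $g_C$-dimensional subset of $\Pic(D)$. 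Thus each such $W$ lies in the image of the morphism $(N,L)\mapsto N_{\mid D}\otimes L^{-1}$ defined on a variety of dimension at most $(r/e)^2(g_C-1)+1+g_C$, and every fibre of this morphism contains the $g_C$-dimensional orbit $\{(N\otimes M,\,L\otimes M_{\mid D}):M\in\Pic^0(C)\}$; hence the locus of such $W$, and therefore $Z_1$, has dimension at most $(r/e)^2(g_C-1)+1\leq r_0^2(g_C-1)+1$.

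The step I expect to be the main obstacle is the $Z_1$ estimate, specifically controlling the line-bundle twist $L$ of Lemma \ref{lemma-jochen}: a $G$-invariant stable bundle need not descend, only up to twist, and one must show that this twist contributes no net moduli — which is exactly why the determinant identity forcing $L^{\otimes r}$ to be a pullback, together with the $\Pic^0(C)$-orbit sitting inside each fibre, are needed. Secondarily, one has to be careful that each link of the chains $V\rightsquigarrow W'$ and $V\rightsquigarrow W\rightsquigarrow(N,L)$ is genuinely finite-to-one, with Theorem \ref{theorem-finite-fibres} as the crucial input, and that only finitely many discrete invariants (intermediate covers, degrees of $N$ and $L$) occur, so that these dimension comparisons are legitimate.
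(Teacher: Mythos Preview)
Your proof is correct and follows the paper's approach closely; the only differences are cosmetic (for $Z_2$ you package the map as $W'\mapsto \pi'_\ast W'$ and invoke base change where the paper writes out $V'\mapsto\bigoplus_{\sigma\in\Sigma}\sigma^\ast V'_{\mid D}$, and for $Z_1$ you subtract $g_C$ via the $\Pic^0(C)$-orbit in the fibre of $(N,L)\mapsto N_{\mid D}\otimes L^{-1}$ where the paper instead fixes $\det N$ and works on $M^{s,r/e}_{L'}$). One small slip worth fixing: $W$ is not itself a pullback---only $W^{\oplus e}$ is---so its $G$-invariance should be justified via Lemma~\ref{lemma-pullback-galois} (the $n=1$ case forces $\sigma^\ast W\cong W$ for all $\sigma$), and your ``finitely many admissible pairs of degrees of $N$ and $L$'' tacitly uses that one may normalise $\deg N$ modulo $r/e$ by twisting $N$ with a line bundle on $C$, exactly as the paper does when it arranges $0\le\deg N<r$.
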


\begin{proof}
	Clearly, $Z=Z_1\sqcup Z_2$ by Lemma \ref{lemma-pullback-galois}.
	
	We begin with the estimate for $Z_1$.
	Consider $V\in Z_1$ and $W$ stable on $D$ such that $V_{\mid D}\cong W^{\oplus e}$ 
        for some $e\geq 2$. As the Galois group acts trivially 
        on the isomorphism class of $W$ we can apply Lemma \ref{lemma-jochen}.
	Thus, there is a line
	bundle $L$ on $D$ such that $W\otimes L\cong N_{\mid D}$ for some stable vector bundle
	$N$ on $C$.  After twisting $N$ by a line bundle on $C$, we can assume that
	$0\leq \text{deg}N<r$. Note that $W\otimes L$ is stable and so is $N$ by
	Lemma \ref{lemma-stability-pullback}. 
	Fixing the degree of $N$ fixes the degree of $L$ as
	$\text{deg}W+\frac{r}{e}\text{deg}L=\text{deg}(\pi)\text{deg}(N)$.

	We have $\det(W)^{\otimes e}\cong \det(V)_{\mid D}$ which implies
	that $L^{\otimes r}$ descends to $C$. 
	As multiplication by $r$ on $\Pic_{D/k}$ is a finite morphism,
	we obtain that the dimension of all possible line bundles $L$ (with fixed degree)
	is at most $g_C$. Write $P(f)$ for the moduli space of line bundles on $D$
	of degree $(\text{deg}(\pi)f - \text{deg}(W))\cdot \frac{e}{r}$ such that
	their $r$-th power descends to $C$, where $f$ is an integer.
	
	Let $0\leq f < r$ and fix a line bundle $L'$ of degree $f$ on $C$.
        Denote the moduli space of stable bundles of rank $r/e$ and determinant $L'$
        by $M^{s,\frac{r}{e}}_{L'}$.
	Consider the morphism 
	\[
	M^{s,\frac{r}{e}}_{L'}\times_k P(f)\to
	M^{ss,r,\text{deg}(\pi)d}_D, (N,L)\mapsto N_{\mid D}^{\oplus e} \otimes L^{-1}
	\]
	and denote the image by $Z_{f,e}$.
	Observe that $Z_{f,e}$ is closed and so is the finite union $Z'=\bigcup_{f=0}^{r-1}\bigcup_{e\mid r,e\neq 1}Z_{f,e}$.  

	The above discussion shows that $\pi^{\ast}(Z_1)\subseteq Z'$. 
	By Theorem \ref{theorem-finite-fibres}, we have that $\pi^{\ast}$ is a finite morphism and obtain 
	$\text{dim}(Z_1)=\text{dim}(\pi^{\ast}(Z_1))$.
	Computing the dimension we find 
	\[
	\text{dim}(Z_1)\leq \text{max}_{e\mid r, e\neq 1}((r/e)^2-1)(g_C-1)
	+g_C = r_0^2(g_C-1)+1,
	\]
	where $r_0$ is the largest proper divisor of $r$.  
	This concludes the estimate of $\text{dim}(Z_1)$.
		
	To obtain a bound for $\text{dim}(Z_2)$ consider $V\in Z_2$.
	By Lemma \ref{lemma-decomposition-small-degree}
	there is an intermediate cover
	$D\to D'\to C$ of degree $n$ such that $V_{\mid D'}\cong V'\oplus W'$,
	where $V'$ is semistable of rank $r/n$ and $W'_{\mid D}$ is a direct sum of conjugates of $V'_{\mid D}$.
    
    Let $\Sigma$ be a subset of $G$ of cardinality $n$.
	Consider the morphism 
	\[
	    M^{ss,\frac{r}{n},d}_{D'}\to
	    M^{ss,r,\text{deg}(\pi)d}_D,
	    V'\mapsto \bigoplus_{\sigma\in \Sigma}\sigma^{\ast}V'_{\mid D}
	\]
	and denote the image by $Z_{D',\Sigma}$.
	Observe that $Z_{D',\Sigma}$ is closed as the image of a finite morphism
	and by construction $V_{\mid D}\in Z_{D',\Sigma}$ for some $\Sigma$
	and $D'$. Thus, $\pi^{\ast}Z_2$ is contained in the union of all such
	$Z_{D',\Sigma}$, where $D\to D'\to C$ is an intermediate cover
	and $\Sigma$ is a subset of $G$ of cardinality $n$.
	Up to isomorphism there are only finitely many intermediate covers
	$D\to D'\to C$ and clearly
	there are only finitely many $\Sigma$.
	Thus, we can estimate the dimension
	\[
	\dim(Z_2)=\dim(\pi^{\ast}Z_2)\leq
	\text{max}_{D',\Sigma}\dim(Z_{D',\Sigma}),
	\]
	where $D'$ and $\Sigma$ are as above.
	Applying Theorem \ref{theorem-finite-fibres} we have
	\[
	\text{dim}(Z_{D',\Sigma})\leq \frac{r^2}{n^2}(g_{D'}-1)+1.
	\]
	By Riemann-Hurwitz we obtain
	\[
	    \dim(Z_{D',\Sigma})\leq n\frac{r^2}{n^2}(g_C-1)+1=r\frac{r}{n}(g_C-1)+1
	\]
    and conclude
	\[
	    \text{dim}(Z_2)\leq \text{max}_{n\mid r, n\neq 1}r\frac{r}{n}(g_C-1)+1=rr_0(g_C-1)+1.
	\]
	
	If $G$ is prime to $p$ and $r$ is a power of $p$, then a decomposition of the form 
	$V_{\mid D}\cong \bigoplus_{i=1}^n W_i^{\oplus e}, n\geq 2$,
	can not happen. Indeed, $n\rk(W_i)e=r$ and we find that $n$ is a power of $p$ as well. 
	By Lemma \ref{lemma-decomposition-small-degree} there is an intermediate cover of $D\to C$ of degree $n$.
	However, $G$ being prime to $p$ only allows for such an intermediate cover if $n=1$.
\end{proof}
	
	As a direct consequence of the dimension estimate we obtain
	the existence of stable bundles that remain stable on a fixed (\'etale) cover.
\begin{lemma}
	\label{lemma-etale-non-empty}
	Let $\pi:D\to C$ be an \'etale cover of a smooth projective curve $C$
	of genus $g_C\geq 2$. Let $r\geq 2$ and $d$ be integers.
	Let $Z$ be the closed
	subset $Z$ of $M^{s,r,d}_{C}$ of stable bundles that are not stable 
	after pullback to $D$.
	Then $\mathrm{codim}_{M^{s,r,d}_C}(Z)\geq 2$.
	
	In particular, there are stable bundles of rank $r$ and degree $d$
	on $C$ that remain stable after pull back to $D$.
\end{lemma}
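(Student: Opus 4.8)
The plan is to reduce to the Galois case, invoke the dimension estimate of Lemma \ref{lemma-high-codimension}, and compare with the dimension of the moduli space.

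First I would pass to the Galois hull. Let $\tilde\pi:\tilde D\to C$ be the normal closure of $D/C$; since the Galois closure of an \'etale cover is again \'etale, $\tilde\pi$ is an \'etale Galois cover, and it factors as $\tilde D\to D\to C$. If a stable bundle $V$ on $C$ fails to be stable on $D$, then it also fails to be stable on $\tilde D$: a subbundle $W\subsetneq V_{\mid D}$ with $\mu(W)\geq \mu(V_{\mid D})$ pulls back to a proper subbundle $W_{\mid \tilde D}\subsetneq V_{\mid \tilde D}$ with $\mu(W_{\mid\tilde D})=\deg(\tilde D/D)\mu(W)\geq \deg(\tilde D/D)\mu(V_{\mid D})=\mu(V_{\mid\tilde D})$, using Lemma \ref{lemma-stability-pullback}(ii) and that pullback preserves rank. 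Hence $Z$ is contained in the analogous locus $\tilde Z\subseteq M^{s,r,d}_C$ attached to $\tilde\pi$, and it suffices to bound $\dim\tilde Z$.

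Next I would apply Lemma \ref{lemma-high-codimension} to $\tilde\pi$. Writing $\tilde Z=\tilde Z_1\sqcup\tilde Z_2$ as there and using $r_0\leq r$, we get
\[
\dim Z\leq \dim\tilde Z\leq \max\bigl(r_0^2(g_C-1)+1,\ r_0 r(g_C-1)+1\bigr)=r_0 r(g_C-1)+1.
\]
Since $\dim M^{s,r,d}_C=r^2(g_C-1)+1$, this yields
\[
\mathrm{codim}_{M^{s,r,d}_C}(Z)\geq r^2(g_C-1)+1-\bigl(r_0 r(g_C-1)+1\bigr)=r(r-r_0)(g_C-1).
\]
Now $r\geq 2$, the largest proper divisor satisfies $r_0\leq r/2$ so $r-r_0\geq 1$, and $g_C-1\geq 1$; hence the right-hand side is at least $2$, proving $\mathrm{codim}_{M^{s,r,d}_C}(Z)\geq 2$. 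In particular $Z\neq M^{s,r,d}_C$, which is non-empty for $g_C\geq 2$, so there exist stable bundles of rank $r$ and degree $d$ on $C$ that stay stable after pullback to $D$.

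I do not expect a real obstacle here: once the reduction to the Galois hull is in place, everything follows immediately from Lemma \ref{lemma-high-codimension}, the only external input being the standard formula $\dim M^{s,r,d}_C=r^2(g_C-1)+1$. The single point worth a line of care is the arithmetic inequality $r(r-r_0)(g_C-1)\geq 2$ in the extreme case $r=2$, $g_C=2$ (where $r_0=1$ and the product equals exactly $2$), which is precisely the boundary situation and still goes through.
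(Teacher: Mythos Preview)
Your proposal is correct and follows essentially the same approach as the paper: reduce to the Galois closure, invoke Lemma \ref{lemma-high-codimension} to bound $\dim Z\leq r_0 r(g_C-1)+1$, and compare with $\dim M^{s,r,d}_C=r^2(g_C-1)+1$ to obtain $\mathrm{codim}(Z)\geq r(r-r_0)(g_C-1)\geq 2$. Your write-up is in fact more detailed than the paper's, spelling out explicitly why the Galois-closure reduction is legitimate and checking the boundary case $r=2$, $g_C=2$.
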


\begin{proof}
    Observe that we can replace $D\to C$ by its Galois closure.
	By Lemma \ref{lemma-high-codimension} we have 
	\[
	    \text{dim}(Z)\leq r_0 r(g_C-1)+1,
	\]
	where $r_0$ is the largest proper divisor of $r$.
    As
	\[
	    r^2(g_C-1)+1=\text{dim}(M^{s,r,d}_C),
    \]
	$g_C\geq 2$, and $r\geq 2$, we conclude
	\[
	    \mathrm{codim}(Z)\geq r(r - r_0)(g_C-1) \geq 2.
	\]
\end{proof}

For the cover $C_{r-good}$ the estimate obtained in Lemma \ref{lemma-high-codimension}
is sharp if the rank is prime to $p$. To show this we need a way to construct
stable bundles with prescribed decomposition behaviour after pullback.
This can be done for cyclic covers. We start with a descent lemma for such covers.
\begin{lemma} 
	\label{lemma-cyclic-descend}
	Let $Y\to X$ be a cyclic \'etale cover of proper varieties 
        with Galois group $G$.
	Let $V$ be a simple sheaf on $Y$.
	Then $V$ descends to $X$ iff $V$ is $G$-invariant.
\end{lemma}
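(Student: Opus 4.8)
The plan is to reduce the problem to the vanishing of a degree-two group-cohomology obstruction, using the fact (recalled earlier in the paper) that for an \'etale Galois cover a $G$-linearization is the same as a descent datum. The direction ``$V$ descends $\Rightarrow$ $V$ is $G$-invariant'' is immediate: if $V\cong\pi^{\ast}W$, then since $\pi\circ\sigma=\pi$ for every $\sigma\in G$ we get canonical isomorphisms $V=\pi^{\ast}W\cong(\pi\sigma)^{\ast}W=\sigma^{\ast}\pi^{\ast}W=\sigma^{\ast}V$, so $V$ is $G$-invariant. For the converse, suppose $V$ is $G$-invariant via isomorphisms $\psi_{\sigma}\colon V\xrightarrow{\sim}\sigma^{\ast}V$. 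Since $Y$ is proper over the algebraically closed field $k$ and $V$ is simple, $\Hom(V,V)=k$, so $\mathrm{Aut}(V)=k^{\ast}$ consists of scalars. The failure of the $\psi_{\sigma}$ to form a linearization is measured by
\[
\lambda_{\sigma,\tau}:=\psi_{\sigma\tau}^{-1}\circ\tau^{\ast}\psi_{\sigma}\circ\psi_{\tau}\ \in\ \mathrm{Aut}(V)=k^{\ast},
\]
and, exactly as in the proof of Lemma \ref{lemma-determinant-descend} (see \cite[Proposition 2.8]{dk}), the family $(\lambda_{\sigma,\tau})$ is a $2$-cocycle. As pullback along any automorphism $\sigma$ fixes scalar endomorphisms, the induced $G$-action on $\mathrm{Aut}(V)=k^{\ast}$ is trivial, so $(\lambda_{\sigma,\tau})$ defines a class in $H^{2}(G,k^{\ast})$ for the trivial action.

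Now comes the step that uses the hypotheses. For a finite cyclic group $G$ of order $n$ with trivial coefficients one has $H^{2}(G,k^{\ast})\cong k^{\ast}/(k^{\ast})^{n}$ (the norm map being the $n$-th power map), and this group vanishes because $k^{\ast}$ is divisible, $k$ being algebraically closed. Hence the obstruction class is zero: there is a function $\mu\colon G\to k^{\ast}$ whose coboundary is $(\lambda_{\sigma,\tau})$, and replacing each $\psi_{\sigma}$ by $\mu(\sigma)\psi_{\sigma}$ (using once more that scalars are $G$-invariant) turns the $\psi_{\sigma}$ into a genuine $G$-linearization of $V$. Since $Y\to X$ is an \'etale Galois cover, a $G$-linearization is a descent datum, and by effective \'etale descent for coherent sheaves $V$ descends to $X$.

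The only step requiring real care is the identification of the obstruction as a class in $H^{2}(G,k^{\ast})$ for the \emph{trivial} $G$-action: for this both simplicity of $V$ (to get $\mathrm{Aut}(V)=k^{\ast}$) and the fact that $G$ acts by pullback (hence trivially on scalars) are essential. Once this is in place, cyclicity of $G$ together with $k$ algebraically closed forces the obstruction to vanish, and the remaining manipulations — verifying the cocycle condition, rescaling the $\psi_{\sigma}$, and invoking \'etale descent — are routine.
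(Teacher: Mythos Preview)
Your proof is correct and is essentially the same argument as the paper's, phrased in the language of group cohomology: the paper carries out by hand exactly the computation showing $H^{2}(\mathbb{Z}/n,k^{\ast})=0$, choosing a generator $\sigma$, defining $\varphi_{\sigma^{l}}$ inductively so that the $2$-cocycle is determined by the single scalar $\lambda=(\sigma^{\ast}\varphi_{\sigma^{n-1}})\circ\varphi_{\sigma}\in k^{\ast}$, and then rescaling by an $n$-th root $\lambda^{1/n}$. Your cohomological packaging is cleaner and makes the role of each hypothesis (simplicity for $\mathrm{Aut}(V)=k^{\ast}$, cyclicity and algebraic closedness for the vanishing of $H^{2}$) more transparent, but the content is identical.
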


\begin{proof}
	The "only if" implication is trivial.
	For the "if" implication let $\sigma$ be a generator of $G$ of order $n$.
	Fix an isomorphism $\varphi_{\sigma}:V\xrightarrow{\sim} \sigma^{\ast}V$.
	For $2\leq l < n$ define $\varphi_{\sigma^l}:V\xrightarrow{\sim} (\sigma^l)^{\ast}V$
	inductively as the composition $\sigma^{\ast}\varphi_{\sigma^{l-1}}\circ
	\varphi_{\sigma}$. Further define $\varphi_{e}=\text{id}_V$, where $e$ denotes
	the identity of $G$.
	
	Consider $\sigma^{\ast}\varphi_{\sigma^{n-1}}\circ \varphi_{\sigma}$.
	This is an automorphism of $V$. As $V$ is simple it corresponds to 
	a scalar $\lambda \in k^{\ast}$.
	Since $k$ is algebraically closed we can find an $n$-th
	root $\lambda^{1/n}$ of $\lambda$.
	The automorphisms $\psi_{\sigma^{l}}:=\lambda^{-l/n}\varphi_{\sigma^{l}}$
	define a $G$-linearization of $V$.
	Indeed, for $1\leq l,l'$ such that $l'+l<n$ we have
	\[
	    (\sigma^{l})^{\ast}\psi_{\sigma^{l'}}\circ \psi_{\sigma^{l}}=
	\lambda^{(-l-l')/n}\cdot(\sigma^{l+l'-1})^{\ast}\varphi_{\sigma}\circ \dots \circ
	\sigma^{\ast}\varphi_{\sigma}\circ \varphi_{\sigma}=	
	\psi_{\sigma^{l+l'}}
	\]
	by definition. It remains to check this property for $l+l'=n$.
	We have
	\[
	(\sigma^{l})^{\ast}\psi_{\sigma^{l'}}\circ \psi_{\sigma^{l}}=
	\lambda^{-1}\cdot(\sigma^{n-1})^{\ast}\varphi_{\sigma}\circ \dots \circ
	\sigma^{\ast}\varphi_{\sigma}\circ \varphi_{\sigma}=\lambda^{-1}\lambda=1
	\]
	by definition of $\lambda$.  
\end{proof}

We are now able to show that the estimate in Lemma \ref{lemma-high-codimension}
is sharp for $C_{r-good}$ in most cases. It suffices to find a prime to $p$ cover
where the decomposition locus has the right dimension as the decomposition locus
for $C_{r-good}$ is the largest one.
\begin{lemma}
	\label{lemma-Z_2-strata}
	Let $C$ be a smooth projective curve of genus $g_C\geq 2$.
	Let $r\geq 2$ be such that $p$ is not the smallest proper divisor of $r$
	if $\text{char}(k)=p>0$.
	Then there is an \'etale prime to $p$ cyclic cover $D\to C$ such that 
	$\dim(Z_2)=rr_0(g_C-1)+1$, 
	where $r_0$ denotes the largest proper divisor of $r$
	and $Z_2\subset M^{s,r,d}_C$ is defined as in Lemma \ref{lemma-high-codimension}.
\end{lemma}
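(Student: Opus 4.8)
The plan is to take $D\to C$ to be a connected \'etale cyclic cover of prime degree $q$, where $q$ is the smallest prime divisor of $r$ (this is what ``smallest proper divisor of $r$'' refers to, so the hypothesis is exactly $q\neq p$ when $p>0$). Such a cover is automatically prime to $p$, and it exists because a surjection $\pi_{\et}(C)\twoheadrightarrow \mathbb{Z}/q$ corresponds to a nonzero element of $H^1_{\et}(C,\mathbb{Z}/q)\cong H^1_{\et}(C,\mu_q)=\mathrm{Pic}(C)[q]\cong(\mathbb{Z}/q)^{2g_C}$, which is nonzero since $q\neq p$ and $g_C\geq 1$. Since the bound $\dim(Z_2)\leq rr_0(g_C-1)+1$ is already Lemma \ref{lemma-high-codimension}, and since for $G=\mathrm{Gal}(D/C)=\mathbb{Z}/q$ the transitivity in Lemma \ref{lemma-pullback-galois} forces $n\in\{1,q\}$, the task is to exhibit an $rr_0(g_C-1)+1$-dimensional family of stable bundles on $C$ whose pullback to $D$ decomposes with $n=q$.

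The family I would use is $V'\mapsto \pi_{\ast}V'$, with $V'$ ranging over stable bundles of rank $r_0=r/q$ and degree $d$ on $D$. For any such $V'$ one has $\pi^{\ast}\pi_{\ast}V'\cong\bigoplus_{\sigma\in G}\sigma^{\ast}V'$ (the standard base-change identity for an \'etale Galois cover, using $D\times_C D\cong\bigsqcup_{\sigma\in G}D$). By Lemma \ref{lemma-cyclic-descend} a stable $V'$ is $G$-invariant if and only if it descends, i.e.\ $V'\cong\pi^{\ast}N$ for some stable $N$ on $C$ (stability of $N$ by Lemma \ref{lemma-stability-pullback}(v)); hence the $G$-invariant locus is empty if $q\nmid d$ and otherwise equals $\pi^{\ast}(M^{s,r_0,d/q}_C)$, a proper closed subvariety of $M^{s,r_0,d}_D$ of dimension $r_0^2(g_C-1)+1$ (using Theorem \ref{theorem-finite-fibres}). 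For $V'$ in the complementary open dense locus $U$, the $\sigma^{\ast}V'$ are pairwise non-isomorphic (as $q$ is prime, a single nontrivial isomorphism would force $G$-invariance), so $\pi_{\ast}V'$ has rank $r$, degree $d$ (since $\deg\pi_{\ast}\mathcal{O}_D=0$ for an \'etale cover), and is semistable by Lemma \ref{lemma-etale-pushforward}. It is in fact \emph{stable}: a subbundle $F\subseteq\pi_{\ast}V'$ of slope $\mu(\pi_{\ast}V')$ pulls back to a $G$-invariant subbundle of $\bigoplus_{\sigma}\sigma^{\ast}V'$ of slope equal to that of every summand, hence to a sub-direct-sum indexed by a $G$-stable subset of $G$; since $G$ acts on this index set by the transitive regular action, this subset is empty or all of $G$, so $F=0$ or $F=\pi_{\ast}V'$. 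Therefore $\pi_{\ast}V'\in Z_2$ with $n=q$ and $e=1$.

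To finish the dimension count I would note that $V'\mapsto\pi_{\ast}V'$ is a morphism on $U$ (openness of $U$ follows from upper-semicontinuity of $\hom$ between stable bundles of equal slope) with finite fibres: if $\pi_{\ast}W'\cong\pi_{\ast}V'$ then $\bigoplus_{\sigma}\sigma^{\ast}W'\cong\bigoplus_{\sigma}\sigma^{\ast}V'$, so by Krull--Schmidt $W'\cong\sigma^{\ast}V'$ for some $\sigma$, and the fibre over $\pi_{\ast}V'$ is exactly $\{\sigma^{\ast}V':\sigma\in G\}$, of cardinality $q$. Consequently $\dim(Z_2)\geq\dim\pi_{\ast}(U)=\dim U=\dim M^{s,r_0,d}_D=r_0^2(g_D-1)+1=r_0^2 q(g_C-1)+1=rr_0(g_C-1)+1$ by Riemann--Hurwitz, and together with Lemma \ref{lemma-high-codimension} this yields the claimed equality. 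The point requiring the most care is the step from semistability to genuine stability of $\pi_{\ast}V'$ together with the identification of the fibres of $V'\mapsto\pi_{\ast}V'$ (so that the image does not collapse to a smaller-dimensional locus); both rest on the identity $\pi^{\ast}\pi_{\ast}V'\cong\bigoplus_{\sigma}\sigma^{\ast}V'$, Lemma \ref{lemma-cyclic-descend}, and uniqueness of the polystable decomposition.
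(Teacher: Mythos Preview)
Your proof is correct and follows essentially the same strategy as the paper's: the same cyclic cover of prime degree $q=r/r_0$, the same open locus of stable bundles on $D$ not pulled back from $C$, the same appeal to Lemma~\ref{lemma-cyclic-descend} to ensure that a stable non-invariant $V'$ has $q$ pairwise non-isomorphic conjugates, and the same dimension count via Riemann--Hurwitz. The only difference is one of packaging: where the paper equips $\bigoplus_{\sigma}\sigma^{\ast}W'$ with its natural $G$-linearization and argues that the descended bundle is stable because no proper polystable summand admits a linearization, you identify the descended bundle directly as $\pi_{\ast}V'$ and verify its stability by pulling back a hypothetical equal-slope subbundle---the two constructions yield the same bundle on $C$.
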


\begin{proof}
    As $p$ is not the smallest divisor 
    of $r$ we have that $r/r_0$ is prime to $p$.  
    Let $\pi:D\to C$ be an \'etale cyclic cover of degree $r/r_0$, i.e., with Galois group $\mu_{r/r_0}$.
    Note that such covers correspond to torsion points of order $r/r_0$ in $\Pic^0_{C/k}$ and always exist.
    Also note that $r/r_0$ is prime. Thus, there are no intermediate covers.
    
    Consider
    \[
        U:=M^{s,r_0,\deg(\pi)d}_D \cap (M^{ss,r_0,\deg(\pi)d}_D \setminus \pi^{\ast} M^{ss,r_0,d}_C).
    \]
    By Theorem \ref{theorem-finite-fibres} pullback along $\pi$ is a finite morphism and the set $U$ is open and non-empty. 
    Thus, by Riemann-Hurwitz $U$ has dimension 
    \[
    \dim(U)=r_0^2(g_D-1)+1=r r_0(g_C-1)+1.
    \]
    
    Consider a closed point $W'\in U$. Then the orbit $O$ of $W'$ under the action of $\mu_{r/r_0}$ is contained in $M^{s,r_0,\deg(\pi)d}_D$.
    By Lemma \ref{lemma-cyclic-descend} the orbit $O$ has cardinality $r/r_0$ as otherwise $W'$ would descend to $C$.
    Clearly, no conjugate of $W'$ can descend to $C$ as well, i.e., $O\subset U$.
    
    Consider the bundle $W:=\bigoplus_{\sigma \in \mu_{r/r_0}}\sigma^{\ast}W'$.
    Then $W$ has rank $r$, admits a $\mu_{r/r_0}$-linearization, and no polystable summand of $W$ admits a
    $\mu_{r/r_0}$-linearization. Thus, there exists $V\in M^{s,r,d}_C$ such that $V_{\mid D}\cong W$.
    By construction $V$ lies in $Z_2$. In particular, $\pi^{\ast}Z_2$ 
    contains the image of
    \[
        U\to M^{s,r,\deg(\pi)d}_D, W'\mapsto \bigoplus_{\sigma \in \mu_{r/r_0}}\sigma^{\ast}W',
    \]
    which has dimension
    $\dim(U)$. We obtain that 
    \[
        \dim(Z_2)\geq r r_0(g_C-1)+1.
    \]
    As we already have the other inequality from Lemma
    \ref{lemma-high-codimension} we conclude.
\end{proof}

\begin{remark}
    Let $D\to C$ be an \'etale Galois cover.
    One can further decompose $Z_2$ into the strata $Z_2(n,e):=\{ V\in M^{s,r,d}_C | V_{\mid D} \cong \bigoplus_{i=1}^n W_i^{\oplus e} \},$
    where $V_{\mid D}\cong \bigoplus_{i=1}^n W_i^{\oplus e}$ is the decomposition of Lemma
    \ref{lemma-pullback-galois}. One can compute the dimension of $Z_2(n,1)$ in an analogous manner
    if $D\to C$ is prime to $p$ and cyclic of degree $n$. The only change being that one has to remove
    all bundles of rank $r/n$ arising from an intermediate cover $D\to D'\to C$, $D'\neq D$.
\end{remark}

Applying the results of this subsection for arbitrary Galois covers to the cover $C_{r-good}\to C$, 
see Theorem \ref{theorem-very-large-cover}, we obtain Theorem $2$:

\begin{theorem}
	\label{theorem-non-empty}
	Let $C$ be a smooth projective curve of genus $g_C\geq 2$.
	Let $r\geq 2$ and $d\in\mathbf{Z}$. 
        Then the prime to $p$ stable bundles of rank $r$
	and degree $d$ form a big open $M^{p'-s,r,d}_C$ in $M^{s,r,d}_C$.
	More precisely, we have
	\[
		\dim(M^{s,r,d}_C\setminus M^{p'-s,r,d}_C)\leq rr_0(g_C-1)+1,
        \]
	where $r_0$ denotes the largest proper divisor of $r$.
        Moreover, if $p$ is not the smallest proper divisor of $r$, then equality holds.
\end{theorem}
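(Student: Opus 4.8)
The plan is to assemble the statement from three results already established: Theorem \ref{theorem-very-large-cover}, which identifies the complement of the prime to $p$ stable locus and gives openness; Lemma \ref{lemma-high-codimension}, which supplies the dimension upper bound; and Lemma \ref{lemma-Z_2-strata}, which supplies the matching lower bound when $p$ is not the smallest proper divisor of $r$.

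First I would apply Theorem \ref{theorem-very-large-cover}: it shows that prime to $p$ stability is an open condition and that a stable bundle $V$ of rank $r$ is prime to $p$ stable if and only if $V_{\mid C_{r-good}}$ is stable. Hence $Z:=M^{s,r,d}_C\setminus M^{p'-s,r,d}_C$ is exactly the closed subset of $M^{s,r,d}_C$ consisting of stable bundles that do not remain stable after pullback along the \'etale prime to $p$ Galois cover $\pi:C_{r-good}\to C$.

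Next I would feed $\pi$ into Lemma \ref{lemma-high-codimension}. This decomposes $Z=Z_1\sqcup Z_2$ with $\dim(Z_1)\leq r_0^2(g_C-1)+1$ and $\dim(Z_2)\leq rr_0(g_C-1)+1$. Since $r_0$ is a proper divisor of $r$ we have $r_0<r$, so $r_0^2<rr_0$, and therefore $\dim(Z)\leq rr_0(g_C-1)+1$, the asserted bound. For bigness I would combine this with the equality $\dim(M^{s,r,d}_C)=r^2(g_C-1)+1$ to obtain $\mathrm{codim}_{M^{s,r,d}_C}(Z)\geq r(r-r_0)(g_C-1)\geq 2$, using $r\geq 2$, $g_C\geq 2$ and $r-r_0\geq 1$; hence $M^{p'-s,r,d}_C$ is big in $M^{s,r,d}_C$.

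Finally, for the equality clause I would assume that $p$ is not the smallest proper divisor of $r$ (which includes $p=0$) and invoke Lemma \ref{lemma-Z_2-strata}: it produces an \'etale prime to $p$ cyclic cover $D\to C$ whose stratum $Z_2^{D}\subseteq M^{s,r,d}_C$ (defined as in Lemma \ref{lemma-high-codimension}) has dimension exactly $rr_0(g_C-1)+1$. The step that needs care --- and the conceptual hinge of the whole argument --- is to observe that $Z_2^{D}\subseteq Z$: a bundle in $Z_2^{D}$ becomes non-stable after pullback to $D$, and since $D\to C$ is an \'etale prime to $p$ cover such a bundle is not prime to $p$ stable (here one uses the definition of prime to $p$ stability together with Corollary \ref{cor-pro-separable-pro-etale}). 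Consequently $\dim(Z)\geq rr_0(g_C-1)+1$, which together with the upper bound established above yields equality. Apart from this observation the proof is bookkeeping, all the substantive content sitting in the cited lemmas; the mild subtlety worth flagging is that the upper bound on $\dim(Z)$ is controlled by the single cover $C_{r-good}$, while the sharp lower bound is witnessed by a possibly different, smaller cover.
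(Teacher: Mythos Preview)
Your proposal is correct and follows exactly the route the paper intends: the paper's own proof is the one-line remark preceding the statement, namely applying Lemma \ref{lemma-high-codimension} and Lemma \ref{lemma-Z_2-strata} to the cover $C_{r-good}$ of Theorem \ref{theorem-very-large-cover} (with bigness as in Lemma \ref{lemma-etale-non-empty}). One cosmetic point: the appeal to Corollary \ref{cor-pro-separable-pro-etale} is unnecessary, since the cyclic cover $D\to C$ of Lemma \ref{lemma-Z_2-strata} is already \'etale prime to $p$, so $Z_2^{D}\subseteq Z$ follows directly from Definition \ref{definition-fun-stab}.
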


Extending a prime to $p$ stable vector bundle from a large curve to a surrounding smooth projective variety using Mathur's extension theorem, \cite{mathur} Theorem 1, we obtain the existence of prime to $p$ stable vector bundles in higher dimensions. 
However, we can not control the numerical data, i.e., which components of the stack of bundles 
admit prime to $p$ stable bundles.
\begin{cor}
    Let $X$ be a smooth projective variety of dimension $\geq 2$.
    There are prime to $p$ stable vector bundles of rank $r\geq \dim(X)$ on $X$.  
\end{cor}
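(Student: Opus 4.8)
The plan is to obtain $V$ by extending, via Mathur's theorem, a prime to $p$ stable bundle from a general complete intersection curve $C\subset X$, and then to check that the extension is prime to $p$ stable using Theorem~\ref{theorem-very-large-cover} together with the fact that pullbacks of stable bundles along Galois covers are polystable. Fix $n:=\dim(X)\geq 2$ and $r\geq n$, and let $\pi\colon Y:=X_{r-good}\to X$ be the \'etale prime to $p$ Galois cover of Theorem~\ref{theorem-very-large-cover}, with Galois group $G$. First I would choose a general complete intersection curve $C\subset X$, cut out by sufficiently positive hypersurface sections, so that $C$ is a smooth projective curve of genus $g_C\geq 2$, so that $C$ satisfies the hypotheses of Mathur's extension theorem, and so that the preimage $D:=\pi^{-1}(C)=C\times_X Y$ is smooth and connected; connectedness holds for $C$ of large enough degree by the Lefschetz theorem for the \'etale fundamental group, $\pi_1^{\et}(C)\twoheadrightarrow\pi_1^{\et}(X)$. (Even without this, $D$ is a disjoint union of \'etale prime to $p$ Galois covers of $C$, which is all that is used below, so that $D\to C$ is in any case an \'etale prime to $p$ cover.)

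By Theorem~\ref{theorem-non-empty} applied to $C$, the prime to $p$ stable locus $M^{p'-s,r,0}_C$ is a nonempty open subset of $M^{s,r,0}_C$ — nonempty since $\dim M^{s,r,0}_C=r^2(g_C-1)+1\geq 2$ — so I would fix a prime to $p$ stable vector bundle $W$ of rank $r$ on $C$. Because $r\geq n=\dim(X)$, Mathur's extension theorem (\cite{mathur}, Theorem~1) applies to $W$ and yields a stable vector bundle $V$ on $X$ with $V_{\mid C}\cong W$.

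It remains to show that $V$ is prime to $p$ stable; by Theorem~\ref{theorem-very-large-cover} it suffices to prove that $\pi^{\ast}V$ is stable. Since $V$ is stable and $\pi$ is Galois, Lemma~\ref{lemma-pullback-galois} gives $\pi^{\ast}V\cong(\bigoplus_{i=1}^{m}W_i)^{\oplus e}$ with the $W_i$ pairwise non-isomorphic stable bundles on $Y$ of slope $\mu(\pi^{\ast}V)$. If $\pi^{\ast}V$ were not stable we would have $me>1$, so that $W_1$ is a proper direct summand with $0<\rk(W_1)<r$. Restricting the decomposition to the curve $D$ — no genericity is needed for a direct summand — gives a subbundle $(W_1)_{\mid D}\subseteq(\pi^{\ast}V)_{\mid D}$ with $0<\rk((W_1)_{\mid D})<r$ and, since the degree on $D$ is computed by intersecting with the fixed class $[D]$, with $\mu((W_1)_{\mid D})=\mu((\pi^{\ast}V)_{\mid D})$; hence $(\pi^{\ast}V)_{\mid D}$ is not stable. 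But $(\pi^{\ast}V)_{\mid D}=(V_{\mid C})_{\mid D}=W_{\mid D}$ is stable, since $W$ is prime to $p$ stable and $D\to C$ is an \'etale prime to $p$ cover (in the disconnected case, apply this on each component). This contradiction shows $\pi^{\ast}V$ is stable, so $V$ is a prime to $p$ stable vector bundle of rank $r$; as $r\geq\dim(X)$ was arbitrary, we are done.

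I expect the main obstacle to be precisely this last step: one has no a priori control over destabilizing subsheaves of the extension $\pi^{\ast}V$, so one cannot directly invoke a converse of the Mehta--Ramanathan restriction theorem on the fixed curve $D$. What makes the argument work is that $\pi^{\ast}V$ is automatically polystable, so that a failure of stability is always witnessed by an honest direct summand, whose restriction to $D$ is transparent. A secondary, routine matter is to ensure that a single general complete intersection curve $C$ can be chosen to satisfy all requirements simultaneously — genus at least $2$, Mathur's hypotheses, and (for convenience) connectedness of $\pi^{-1}(C)$ — which holds for a general complete intersection curve of sufficiently high degree.
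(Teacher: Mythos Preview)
Your approach is the same as the paper's: take a prime to $p$ stable bundle on a large complete intersection curve and extend it via Mathur's theorem. The paper leaves the verification that the extension is prime to $p$ stable implicit; your argument supplying this---use polystability of $\pi^{\ast}V$ from Lemma~\ref{lemma-pullback-galois} so that any failure of stability is witnessed by an honest direct summand, then restrict that summand to $D=\pi^{-1}(C)$ and contradict the stability of $W_{\mid D}$---is correct and is precisely the point that needs to be made.
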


As the general bundle is prime to $p$ stable, we obtain:
\begin{cor}
\label{cor-not-dense}
    Let $C$ be a smooth projective curve of genus $g_C\geq 2$.
    Let $r\geq 2$. Then the stable bundles of rank $r$
    that are trivialized on a prime to $p$ cover
    are not dense in $M^{s,r,0}_C$.
\end{cor}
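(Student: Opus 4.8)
The plan is to show that every stable bundle of rank $r\geq 2$ on $C$ that is trivialized on a prime to $p$ cover necessarily lies in the complement
\[
  Z:=M^{s,r,0}_C\setminus M^{p'-s,r,0}_C,
\]
and then to read off from the dimension estimate of Theorem \ref{theorem-non-empty} that $Z$ is a proper closed subset of $M^{s,r,0}_C$, hence that any subset of it — in particular the trivializable locus — fails to be dense.

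The one point that requires an argument is the reduction to \'etale covers, since prime to $p$ stability (Definition \ref{definition-fun-stab}) only tests \'etale prime to $p$ Galois covers. So suppose $V\in M^{s,r,0}_C$ satisfies $V_{\mid D}\cong\mathcal{O}_D^{\oplus r}$ for some prime to $p$ cover $D\to C$; replacing $D$ by a Galois hull we may assume $D\to C$ is a prime to $p$ Galois cover. Factor it as $D\to D'\to C$ with $D'\to C$ the maximal \'etale subcover and $D\to D'$ genuinely ramified; since $D\to C$ is Galois prime to $p$, the cover $D'\to C$ is an \'etale prime to $p$ Galois cover. By Lemma \ref{lemma-stability-pullback}(iv) the bundle $V_{\mid D'}$ is polystable of slope $0$, and so is $\mathcal{O}_{D'}^{\oplus r}$; by hypothesis these two polystable bundles on $D'$ become isomorphic after pullback along the genuinely ramified cover $D\to D'$, so \cite[Lemma 4.3]{bp} (exactly as used in the proof of Theorem \ref{theorem-finite-fibres}) forces $V_{\mid D'}\cong\mathcal{O}_{D'}^{\oplus r}$ already on $D'$. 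Thus $V$ is trivialized on the \'etale prime to $p$ Galois cover $D'\to C$, and we may assume $D=D'$ is \'etale. As $r\geq 2$, the constant bundle $\mathcal{O}_D^{\oplus r}$ is polystable but not stable, so $V_{\mid D}$ is not stable; in particular $V$ is not prime to $p$ stable, i.e. $V\in Z$.

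It remains to observe that $Z$ is a proper closed subset of $M^{s,r,0}_C$. It is closed, being the complement of the open set $M^{p'-s,r,0}_C$ of Theorem \ref{theorem-non-empty}. By that same theorem $\dim Z\leq rr_0(g_C-1)+1$, where $r_0$ is the largest proper divisor of $r$, whereas $\dim M^{s,r,0}_C=r^2(g_C-1)+1$; since $r_0<r$, $r\geq 2$ and $g_C\geq 2$ we get $r^2(g_C-1)+1-\dim Z\geq r(r-r_0)(g_C-1)\geq 2>0$, and in particular $Z\subsetneq M^{s,r,0}_C$. A closed proper subset is not dense, and neither is the trivializable locus, which by the previous paragraph is contained in $Z$. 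This proves the corollary.

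I do not expect a serious obstacle here: the entire substance of the non-density is already packaged in Theorem \ref{theorem-non-empty} (equivalently in the cover $C_{r-good}$ of Theorem \ref{theorem-very-large-cover}), the only additional input being the triviality that $\mathcal{O}_D^{\oplus r}$ is never stable for $r\geq 2$, so a trivializable stable bundle can never be prime to $p$ stable. The sole place that needs a little care is the reduction, carried out above, from arbitrary prime to $p$ (separable) covers to \'etale ones.
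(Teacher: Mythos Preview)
Your proof is correct and follows the same approach as the paper, which presents the corollary as an immediate consequence of Theorem \ref{theorem-non-empty} without further argument. Your reduction from arbitrary prime to $p$ covers to \'etale ones is a bit more than the paper spells out (the introduction states the corollary for \'etale covers only), but it is correct and a worthwhile clarification given the phrasing of the statement.
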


\section*{Acknowledgements}
The results presented are part of the PhD thesis of the author carried out at the University of Duisburg-Essen.
The author would like to thank his PhD advisor Georg Hein for his support and many inspiring and fruitful discussions.
He would also like to thank his second advisor Jochen Heinloth for some helpful discussions, in particular Lemma \ref{lemma-jochen}.
Thanks also go to Stefan Reppen for pointing out the literature \cite{dm} and a discussion on \'etale trivializable bundles. The author would also like to thank the anonymous referees for their feedback which improved and clarified the paper.
The author was funded by the DFG Graduiertenkolleg 2553.

\bibliographystyle{plain}

\bibliography{bibliography}
\end{document}